\def\inte#1{
\displaystyle\mathop{#1\kern0pt}^\circ }
\def\virgp{\raise 2pt\hbox{,}}
\def\cdotpv{\raise 2pt\hbox{;}}
\def\C{\mathop{\bf C\kern 0pt}\nolimits}
\def\DD{\mathop{\bf D\kern 0pt}\nolimits}
\def\K{\mathop{\bf K\kern 0pt}\nolimits}
\def\N{\mathop{\bf N\kern 0pt}\nolimits}
\def\Q{\mathop{\bf Q\kern 0pt}\nolimits}
\def\R{\mathop{\bf R\kern 0pt}\nolimits}
\def\SS{\mathop{\bf S\kern 0pt}\nolimits}
\def\ZZ{\mathop{\bf Z\kern 0pt}\nolimits}
\def\TT{\mathop{\bf T\kern 0pt}\nolimits}
\newcommand{\LC}{\left(}
\newcommand{\RC}{\right)}
\newcommand{\LCB}{\left\{}
\newcommand{\RCB}{\right\}}
\newcommand{\beq}{\begin{equation}}
\newcommand{\eeq}{\end{equation}}
\newcommand{\ben}{\begin{eqnarray}}
\newcommand{\een}{\end{eqnarray}}
\newcommand{\beno}{\begin{eqnarray*}}
\newcommand{\eeno}{\end{eqnarray*}}
\newtheorem{remark}{Remark}[section]
\newtheorem{lemma}{Lemma}[section]
\newtheorem{definition}{Definition}[section]
\newtheorem{theorem}{Theorem}[section]
\renewcommand{\theequation}{\thesection.\arabic{equation}}
\begin{document}


\title{On a shallow-water approximation to the Green-Naghdi equations with the Coriolis effect}
\author{
Robin Ming Chen \footnote{Department of Mathematics, University of Pittsburgh, Pittsburgh, PA 15260; {mingchen@pitt.edu}.} \and
Guilong Gui \footnote{Center for Nonlinear Studies, School of Mathematics, Northwest University, Xi'an 710069, China; {glgui@amss.ac.cn}.} \and
Yue Liu \footnote{Department of Mathematics, University of Texas at Arlington, Arlington, TX 76019; {yliu@uta.edu}.}
}

\date{}
\maketitle


\begin{abstract}

We consider  an  asymptotic 1D (in space) rotation-Camassa-Holm (R-CH)  model,  which could be used to describe the propagation of  long-crested shallow-water waves  in the equatorial ocean regions with allowance for the weak  Coriolis effect due to the Earth's rotation. This  model equation has similar wave-breaking phenomena as the Camassa-Holm equation.   It is  analogous to  the rotation-Green-Naghdi (R-GN) equations  with the weak Earth's rotation effect,  modeling the propagation of wave  allowing large amplitude in shallow water. We provide here a rigorous justification showing that  solutions of the R-GN equations tend to associated solution of the R-CH  model equation in the Camassa-Holm regime with the small amplitude  and the larger wavelength. Furthermore, we demonstrate that the R-GN model equations are locally well-posed in a Sobolev space  by the refined energy estimates.

\end{abstract}


\noindent {\sl Keywords\/}: Coriolis effect; Rotation-Camassa-Holm equation;  Rotation-Green-Naghdi equations.

\vskip 0.2cm

\noindent {\sl AMS Subject Classification} (2010): 35Q53; 35B30; 35G25 \\

\renewcommand{\theequation}{\thesection.\arabic{equation}}
\setcounter{equation}{0}

\section{Introduction}\label{sec_intro}

The theory of water waves embodies the Euler equations of fluid mechanics along with the crucial  behavior of boundaries. Due to the complexity and the difficulties arising in the theoretical and numerical study, simpler model equations have been proposed as approximations to the Euler equations in some specific physical regimes.

Among various asymptotic systems, one of the most prominent examples, which is widely used to model and numerically simulate the propagation of surface waves, in particular in coastal oceanography, is the Green-Naghdi equations (GN) \cite{GN76} (also known as the Serre \cite{Serre} or Su-Gardner equations \cite{SG}). The GN equations model the fully nonlinear shallow-water waves whose amplitude is not necessarily small and represent a higher-order correction to the classical shallow-water equations. The physical validity of the model depends on the characteristics of the flow under consideration. More precisely, it depends on particular assumptions made on the dimensionless parameters $\varepsilon$ and $\mu$ defined as
\begin{equation*}
\text{nonlinearity }\ \ \varepsilon := {a\over h_0}, \qquad \text{shallowness }\ \ \mu := {h_0^2\over \lambda^2},
\end{equation*}
where $a$ is the typical amplitude of the waves, $h_0$ is the mean depth, and $\lambda$ is the typical wavelength. The {\it shallow-water} (or long-wave) regime is characterized by the presumption of small depth or long wavelength ($\mu \ll 1$) only. In such a scaling regime and without any smallness assumption on $\varepsilon$, one can derive the following 1D GN equations
\begin{equation}\label{GN}
\begin{cases}
\eta_t + \left ((1+\varepsilon \eta) u\right )_x = 0,\\
\displaystyle u_t + \eta_x + \varepsilon u u_x = \frac{\mu}{3(1+\varepsilon \eta)}\left[ (1+\varepsilon \eta)^3(u_{xt}+\varepsilon u u_{xx} -\varepsilon u_{x}^2) \right]_x,
\end{cases}
\end{equation}
with an $O(\mu^2)$ correction, where $\eta(t,x)$ and $u(t,x)$ are the parameterization of the surface and the vertically
averaged horizontal component of the velocity at time $t$, respectively. A rigorous justification of the GN model can be found in \cite{Li} for the 1D water waves with a flat bottom; the general case was handled in \cite{alla,DuIs} based on a well-posedness theory.

The GN equation can serve as a seed model for many other asymptotic shallow-water models as far as the nonlinearity parameter $\varepsilon$ is concerned: the {\it weakly nonlinear} regime (also referred to as the Boussinesq scaling)
\begin{equation}\label{Bous scaling}
\mu \ll 1, \qquad \varepsilon = O(\mu)
\end{equation}
yields the usual Boussinesq models \cite{BCS, Bous,C85,KN},  the Korteweg-de Vries (KdV) \cite{KdV} and Benjamin-Bona-Mahoney (BBM) \cite{BBM} equations in the unidirectional case. Despite the advantage that these models provide good asymptotic approximations to the full water wave problem in the weakly nonlinear regime, they fail to capture observed some interesting wave phenomena in nature such as wave breaking and waves of greatest height \cite{AFT,Wh}. This motivates one to pursue alternative shallow-water models incorporating stronger nonlinearity. A successful attempt was made in the {\it moderately nonlinear} regime (known as the Camassa-Holm scaling)
\begin{equation}\label{CH scaling}
\mu \ll 1, \qquad \varepsilon = O(\sqrt{\mu}),
\end{equation}
giving rise to the Camasa-Holm (CH) and Degasperis-Procesi (DP) \cite{CH,CL09,DP} equations.

%
%

One of the goals in the present study is to re-derive model equations \cite{GLL16, GLS16} related to the classical Camassa-Holm equation that can also account for configurations where the effect of solid-body rotation of the Earth, namely the Coriolis effect, is present. Field observations reveal that the interaction between the gravity and the Earth's rotation may play a significant role in study of large-scale oceanic and atmospheric flows, leading to complex phenomena over wide ranges of length and time scales \cite{CB,GSR07,Ped,Vallis}. Simplified and approximate models therefore play a crucial role in potentially gaining insight into processes that occur in the full governing equations. In particular, we will neglect centripetal forces since they are relatively much smaller than the Coriolis terms. We also neglect the variations of the Coriolis parameter and employ the $f$-plane approximation which is applicable for oceanic flows restricted to a meridional range of small latitudinal deviation (about $2^\circ$) near the Equator \cite{Con12,ConJo,CB,Ped}.


Several attempts have been made recently in deriving shallow water asymptotic models for free surface water waves under the influence of the gravity and Coriolis forcing using $f$-plane approximation; see, for e.g. \cite{Fan, GLS16}. These models can describe unidirectional waves in the moderately nonlinear scaling regime, resulting in the CH-type equations, referred to as the rotation-Camass-Holm (R-CH) equations. The approach taken in these works follows the classical idea of asymptotic perturbation analysis \cite{Iv,Jo1}. Our goal here is to put these formal asymptotic procedure on a firm and mathematically rigorous basis. The idea is in the general spirit of \cite{CL09}. More precisely, we will prove the relevance of the R-CH equation as a valid model for the propagation of shallow-water waves with effect of the Coriolis forcing. To do so, we will use the following rotation-Green-Naghdi (R-GN) equations with the Coriolis effect (see \cite{GLL16} for derivation of the model)
\begin{equation}\label{R-GN-2}
\begin{cases}
\eta_t + \left ((1+\varepsilon \eta) u\right )_x = 0,\\
u_t + \eta_x + \varepsilon u u_x + 2 \Omega \eta_t = \frac{\mu}{3(1+\varepsilon \eta)}\left ((1+\varepsilon \eta)^3(u_{xt}+\varepsilon u u_{xx} -\varepsilon u_{x}^2) \right )_x
\end{cases}
\end{equation}
as the reference system, where $ \Omega $ is the constant rotational frequency due to the Coriolis effect, and which is built on a one-parameter family of approximate equations {\it consistent} with the the R-GN equations in the sense of Definition \ref{def-consistent-CH-GN}. Further replacing the vertically averaged velocity by the horizontal velocity evaluated at a certain depth introduces an additional parameter which allows one to arrive at the following R-CH equation (cf. Theorem \ref{prop-two-parameters} and Remark \ref{rmk-CH-eqns-1})
\begin{equation}\label{R-CH-1}
\begin{split}
 u_t-\beta\mu  u_{xxt} + c u_x + 3\alpha\varepsilon uu_x - \beta_0\mu u_{xxx} &+ \omega_1 \varepsilon^2u^2u_x + \omega_2 \varepsilon^3u^3u_x   \\
&= \alpha\beta\varepsilon\mu( 2u_{x}u_{xx}+uu_{xxx}),
\end{split}
\end{equation}
where the constants are given as
\begin{equation*}
\begin{split}
& c = \sqrt{1 + \Omega^2} - \Omega, \quad  \alpha {=} \frac{c^2}{1+c^2}, \quad \beta_0  {=}\frac{c(c^4+6c^2-1)}{6(c^2+1)^2},  \quad \beta {=}\frac{3c^4+8c^2-1}{6(c^2+1)^2}, \\
& \omega_1 {=}\frac{-3c(c^2-1)(c^2-2)}{2(1+c^2)^3}, \quad \omega_2 {=}\frac{(c^2-2)(c^2-1)^2(8c^2-1)}{2(1+c^2)^5}.
\end{split}
\end{equation*}
Note that in the vanishing Coriolis force limit $\Omega \to 0$ we have
\begin{equation*}
c \to 1, \quad \alpha \to {1\over2}, \quad \beta_0 \to {1\over4}, \quad \beta \to {5\over12},  \quad \omega_1, \; \omega_2 \to 0.
\end{equation*}

Unraveling the change of unknowns \eqref{eta u} to express the velocity $u$ in terms of the surface variable $\eta$ with a higher order correction term, one may follow the same procedure as before to derive an equation for the surface evolution, cf. \eqref{free-GBBM-1}.

Our {\it convergence} result, Theorem \ref{thm-justification-1}, concerns comparison of solutions between  the R-GN and R-CH equations. It states that corresponding to a certain family $\{ (u^{\varepsilon, \mu}, \eta^{\varepsilon, \mu}) \}$ of solutions to the generalized BBM equations (including the R-CH solutions) that are consistent with the R-GN equations, there exists a family of solutions $(\underline{u}^{\varepsilon, \mu}, \underline{\eta}^{\varepsilon, \mu})$ to the R-GN equations satisfying
\[
\|\underline{u}^{\varepsilon, \mu}-u^{\varepsilon, \mu}\|_{L^{\infty}([0, t] \times \mathbb{R})}+\|\underline{\eta}^{\varepsilon, \mu}-\eta^{\varepsilon, \mu}\|_{L^{\infty}([0, t] \times \mathbb{R})} \leq O(\mu^2\,t)
\]
over a large time scale $t \in [0, T/\varepsilon]$. Therefore a preliminary well-posedness theory for the  R-GN and R-CH equations on $[0, T/\varepsilon]$ is need. The large time well-posedness for the R-CH equation has been established in \cite{GLS16}. Here we prove the case for the R-GN equations using an energy-type argument, cf. Theorem \ref{thm-local}. The error estimate is established owing to the symmetrizability of the R-GN equations, from which a stability result can be deduced, cf. Lemma \ref{lem-uniqueness-1}. Note that in absence of the Coriolis force, under the CH scaling \eqref{CH scaling}, the solutions to the GN equations approximate the solutions of the full water wave equations with an $O(\mu^2 t)$ error over a time scale $O(1/\varepsilon)$ \cite {alla}. Therefore it is reasonable to expect that the R-GN equations would give a correct approximation to the $f$-plane rotating full water wave model with the same precision, and hence it would follow that the R-CH model approximates the full $f$-plane water wave equations with the same accuracy. But the justification of the R-GN equations is beyond the scope of this article, and will be discussed in a forthcoming paper.

The rest of the paper is organized as follows. In Section \ref{Sec_rch}, we will derive a family of equations for the horizontal velocity $u$ which are consistent with the R-GN equations. Among such a family the R-CH equation can be recovered by introducing the depth parameter. Following the same procedure, in Section \ref{Sec_surface} we derive the corresponding family of equations governing the evolution of the surface. In Section \ref{Sec_local},  we prove that the R-GN equations are locally well-posed in the energy space over an $O(1/\varepsilon)$ time period. Finally in Section \ref{Sec_just}, we demonstrate the result on the asymptotic  convergence of the R-CH equation to the R-GN equations.

\renewcommand{\theequation}{\thesection.\arabic{equation}}
\setcounter{equation}{0}

\section{Derivation of the rotation-Camassa-Holm equation}\label{Sec_rch}
In this section, we derive asymptotical equations to the rational-Green-Naghdi equations in the
Camassa-Holm regime, that is, parameters $\varepsilon$ and $\mu$ belong to the class
\begin{equation}\label{CH-regime}
\mathcal{P}_{\mu_0, M} = \LCB\LC\varepsilon, \mu\RC\vert~0<\mu\leq\mu_0, 0 <\varepsilon\leq M\sqrt{\mu}\RCB,
\end{equation}
for given constants $\mu_0$,  $M>0$.

\begin{definition}
\label{def-consistent-CH-GN}
Let $\mu_0$,  $M>0$, $T>0$ and $\mathcal{P}_{\mu_0, M} $ be as defined in \eqref{CH-regime}. A family $\{\eta^{\varepsilon, \mu}, u^{\varepsilon, \mu}\}_{(\varepsilon, \mu)\in \mathcal{P}_{\mu_0, M}}$ is consistent (of order $s \geq 0$ and on $[0, \frac{T}{\varepsilon}]$) with the R-GN equations \eqref{R-GN-2} if for all $(\varepsilon, \mu)\in \mathcal{P}_{\mu_0, M}$,
\begin{equation}\label{R-GN-consistent-1}
\begin{cases}
\eta_t + \left ((1+\varepsilon \eta) u\right )_x = \mu^2 r_1^{\varepsilon, \mu},\\
u_t + \eta_x + \varepsilon u u_x + 2 \Omega \eta_t = \frac{\mu}{3(1+\varepsilon \eta)}\left ((1+\varepsilon \eta)^3(u_{xt}+\varepsilon u u_{xx} -\varepsilon u_{x}^2) \right )_x+\mu^2 r_2^{\varepsilon, \mu}
\end{cases}
\end{equation}
with $(r_1^{\varepsilon, \mu}, r_2^{\varepsilon, \mu})_{(\varepsilon, \mu)\in \mathcal{P}_{\mu_0, M}}$ bounded in $L^{\infty}([0, \frac{T}{\varepsilon}], H^s(\mathbb{R}))$.

\end{definition}
For the sake of simplicity,  given parameters $\alpha$, $\beta$, $\gamma$, and $\delta$, we denote some coefficients as follows:
\begin{equation}\label{def-coeff-1}
  \begin{split}
  & c= \sqrt{1+\Omega^2}-\Omega, \quad \omega_1 = \frac{-3c(c^2-1)(c^2-2)}{2(1+c^2)^3}, \quad \omega_2 =\frac{(c^2-2)(c^2-1)^2(8c^2-1)}{2(1+c^2)^5},\\
  & A_1= \left(\frac{1}{3}+\gamma+\frac{3\alpha\,c}{c^2+1} \right)+2\Omega \left(\frac{1}{3}+\beta-\frac{\alpha}{c} \right), \quad A_2= \frac{1+\delta-\gamma}{2}+\frac{3\alpha\,c}{c^2+1},\\
  &A_3=\frac{\omega_1}{3}-2\Omega h,\quad A_4=c\left(A_1+ \left(\frac{1}{3}+\gamma+\frac{\alpha}{c} \right) \frac{3\alpha\,c}{c^2+1}-2h \left(\beta-\frac{\alpha}{c} \right) \right),\\
  &A_5= c \left(A_2+ \left(\frac{1}{3}+\gamma+\frac{\alpha}{c} \right) \frac{3\alpha\,c}{c^2+1}+h \left(\beta-\frac{\alpha}{c} \right) \right).
\end{split}
\end{equation}

The following theorem  shows that there is a one parameter family of equations
consistent with the R-GN equations.

\begin{theorem}\label{prop-one-parameter}
Let $p \in \mathbb{R}$, and assume that
\begin{equation}\label{derivation-CH-16}
\begin{split}
&\alpha = c\,p, \quad \beta=-\frac{c^2}{3(c^2+1)}+p, \quad \gamma=-\frac{c^2(5c^2-1)}{3(c^2+1)^3}-\frac{3c^2}{c^2+1}p,\\
& \delta=-\frac{c^2(3c^4+16c^2+4)}{3(c^2+1)^3}-\frac{9c^2}{c^2+1}p.
\end{split}
\end{equation}
Then there exists $D>0$ such that:
for all $s \geq 0$ and $T>0$, and for all bounded family $(u^{\varepsilon, \mu})_{(\varepsilon, \mu)\in \mathcal{P}_{\mu_0, M}} \in L^{\infty}([0, \frac{T}{\varepsilon}], H^{s+D}(\mathbb{R}))$ solving
\begin{equation}\label{GBBM-1}
\begin{split}
&u_t+c u_x+ 3\frac{c^2}{c^2+1}\varepsilon uu_x+\omega_1\varepsilon^2 u^2u_x+\omega_2\varepsilon^3 u^3u_x+\mu(\alpha u_{xxx}+\beta u_{xxt})\\
&=\varepsilon\mu(\gamma uu_{xxx}+\delta u_xu_{xx}),
\end{split}
\end{equation}
the family $\{\eta^{\varepsilon, \mu}, u^{\varepsilon, \mu}\}_{(\varepsilon, \mu)\in \mathcal{P}_{\mu_0, M}}$, with (omitting the indexes $\varepsilon, \mu$)
\begin{equation}\label{relation-eta-u-1}
\begin{split}
\eta&=F(u) :=\frac{1}{c}u- \varepsilon h\, u^2+A_3 \varepsilon^2 \,u^3+ \left(2\Omega A_3+\frac{\omega_2}{4} \right)\varepsilon^3 \,u^4 \\
& \quad \qquad\qquad +\mu \left(\frac{1}{3}+\beta-\frac{\alpha}{c} \right)\, u_{xt}-\varepsilon\mu\bigg(A_1 uu_{xx}+ A_2 u_x^2\bigg)
\end{split}
\end{equation}
is consistent (of order $s$ and on $[0, \frac{T}{\varepsilon}]$) with the R-GN equations \eqref{R-GN-2}.
\end{theorem}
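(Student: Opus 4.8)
The plan is to verify consistency (in the sense of Definition \ref{def-consistent-CH-GN}) directly. Writing $\eta=F(u)$ as in \eqref{relation-eta-u-1}, I would define the two residuals
\[
\mu^2 r_1^{\varepsilon,\mu}:=\eta_t+\big((1+\varepsilon\eta)u\big)_x,
\]
\[
\mu^2 r_2^{\varepsilon,\mu}:=u_t+\eta_x+\varepsilon uu_x+2\Omega\eta_t-\frac{\mu}{3(1+\varepsilon\eta)}\Big((1+\varepsilon\eta)^3(u_{xt}+\varepsilon uu_{xx}-\varepsilon u_x^2)\Big)_x,
\]
and show that $r_1^{\varepsilon,\mu},r_2^{\varepsilon,\mu}$ are bounded in $L^\infty([0,T/\varepsilon],H^s(\mathbb{R}))$ uniformly over $\mathcal{P}_{\mu_0,M}$. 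Throughout I would exploit that in the Camassa--Holm regime \eqref{CH-regime} one has $\varepsilon\le M\sqrt{\mu}$, so $\varepsilon^2\lesssim\mu$ and, more generally, any monomial $\varepsilon^j\mu^k$ with $j+2k\ge4$ is $O(\mu^2)$. The whole computation is then an expansion in which every contribution of order strictly lower than $\mu^2$ must cancel by virtue of the prescribed coefficients in \eqref{def-coeff-1} and the relations \eqref{derivation-CH-16}.

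The first preparatory step is to eliminate time derivatives. From \eqref{GBBM-1} I would solve for $u_t$ and substitute the leading relation $u_{xxt}=-cu_{xxx}+O(\varepsilon)$ back into the term $\mu\beta u_{xxt}$ (a legitimate bootstrap, since that term already carries a factor $\mu$), obtaining
\[
u_t=-cu_x-3\tfrac{c^2}{c^2+1}\varepsilon uu_x-\omega_1\varepsilon^2u^2u_x-\mu(\alpha-\beta c)u_{xxx}+O(\mu^{3/2})
\]
in $H^s$, together with its $x$-differentiated analogues for $u_{xt}$ and $u_{xxt}$. These are exactly the orders required to treat the time derivatives arising in $\eta_t=\partial_t F(u)$: the leading term $\tfrac1c u_t$ needs the expansion modulo $O(\mu^2)$, whereas $-2\varepsilon h\,uu_t$ and $\mu(\tfrac13+\beta-\tfrac\alpha c)u_{xt}$ need $u_t$, resp.\ $u_{xt}$, only modulo $O(\mu^{3/2})$, resp.\ $O(\mu)$, because of their prefactors.

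Next I would treat the continuity equation. Inserting $\eta=F(u)$, applying the chain rule, and replacing each time derivative by its spatial expansion, I would collect terms by homogeneity $\varepsilon^j\mu^k$. The coefficients appearing in $F$ — namely $h$, $A_3$, $2\Omega A_3+\tfrac{\omega_2}{4}$, $\tfrac13+\beta-\tfrac\alpha c$, and $A_1,A_2$ from \eqref{def-coeff-1} — are exactly those that make the $O(1)$, $O(\varepsilon)$, $O(\varepsilon^2)$, $O(\varepsilon^3)$, $O(\mu)$ and $O(\varepsilon\mu)$ contributions vanish, leaving a remainder that is a finite sum of products of $u$ and its derivatives multiplied by monomials $\varepsilon^j\mu^k=O(\mu^2)$. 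This gives $r_1^{\varepsilon,\mu}=O(\mu^2)$.

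The momentum equation is the main obstacle, and I would do it last. Here one must Taylor-expand $\tfrac{1}{1+\varepsilon\eta}$ and $(1+\varepsilon\eta)^3$ in powers of $\varepsilon\eta$ and substitute $\eta=F(u)$; since the dispersive term carries an overall factor $\mu$, only $O(\mu)$ accuracy is needed inside the bracket and $u_{xt}$ there may be replaced by $-cu_{xx}+O(\varepsilon)$. Computing $\eta_x=\partial_xF(u)$ and $\eta_t$ from the elimination step and combining everything, the left-hand side reorganizes, after the cancellations governed by \eqref{def-coeff-1}, into the left-hand side of \eqref{GBBM-1} plus a remainder of order $\mu^2$ — provided the dispersive and nonlinear coefficients match. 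Verifying this match is precisely where \eqref{derivation-CH-16} enters: these are the algebraic identities that force the $\mu$-, $\varepsilon\mu$- and higher nonlinear coefficients of the reduced momentum equation to coincide with those of \eqref{GBBM-1}, and they are solvable leaving $p$ free, which is the source of the one-parameter family. Once this is in place, $u$ solving \eqref{GBBM-1} yields $r_2^{\varepsilon,\mu}=O(\mu^2)$. Finally, the uniform $L^\infty_tH^s$ bound on both residuals follows from the assumed bound on $(u^{\varepsilon,\mu})$ in $H^{s+D}$ — the derivative loss $D$ accounting for the highest spatial derivatives produced by the time-derivative eliminations — together with tame (Moser-type) product estimates in $H^s(\mathbb{R})$ and the embedding $H^{s+D}\hookrightarrow W^{k,\infty}$. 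I expect the delicate part to be the consistent truncation: keeping each sub-expansion to exactly the order dictated by its prefactor, so that no genuinely $O(\mu^{3/2})$ term is either mistakenly dropped or spuriously retained.
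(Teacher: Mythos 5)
Your plan is correct and is essentially the paper's own computation run in the verification direction: the paper starts from the ansatz $\eta=\frac1c u+\varepsilon v$, uses the momentum equation together with \eqref{GBBM-1} (after trading $u_{xxx}$ for $u_{xxt}$) to solve for $\varepsilon v$ and hence define $F$, and then substitutes into the continuity equation to force the coefficient relations \eqref{derivation-CH-14}, which yield \eqref{derivation-CH-16}; reading this backwards gives exactly your residual estimates. One inaccuracy in your roadmap: you have the roles of the two equations reversed --- in the actual cancellation pattern the momentum residual is $O(\mu^2)$ for \emph{any} $\alpha,\beta,\gamma,\delta$ (since $F$ is built by solving that equation), and it is the continuity residual whose lower-order terms only cancel when \eqref{derivation-CH-16} holds; this does not break your verification, since all the identities are hypotheses, but you would find the constraints biting in the opposite place from where you expect.
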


\begin{proof}
For the sake of simplicity, we use the notation $O(\mu)$, $O(\mu^2)$ etc., without
explicit mention to the functional normed space to which we refer. A precise statement
has been given in Definition \ref{def-consistent-CH-GN}. All the equalities would be understood in the appropriate mixed time-spatial Sobolev spaces throughout the proof.

Taking
\begin{equation}\label{eta u}
\eta = \frac{1}{c}u+\varepsilon v,
\end{equation}
where $v$ will be determined later on, we get from the first equation in \eqref{R-GN-2} that
\begin{equation}\label{derivation-CH-1}
\begin{split}
u_t+cu_x+c(\varepsilon\, v)_t+\varepsilon (u^2)_x+ c\varepsilon(\varepsilon\, uv)_x=0.
\end{split}
\end{equation}
Plugging $\eta= \frac{1}{c}u+\varepsilon v$ into the second equation of\eqref{R-GN-2}, we have
\begin{equation}\label{derivation-CH-2}
\begin{split}
&u_t+\frac{1}{c}u_x+\varepsilon\, v_x-2\Omega u_x-2\Omega \varepsilon \frac{1}{c} (u^2)_x-2\Omega \varepsilon (\varepsilon\, uv)_x+\frac{1}{2}\varepsilon (u^2)_x\\
&=\frac{\mu}{3}u_{xxt}-\frac{\mu \varepsilon}{3} \left(uu_{xx}+\frac{3}{2}u_x^2 \right)_x,
\end{split}
\end{equation}
which implies
\begin{equation}\label{derivation-CH-3}
\begin{split}
&u_t+c\, u_x+\varepsilon\, v_x+\varepsilon \left(\frac{1}{2}-\frac{2\Omega}{c} \right)(u^2)_x-2\Omega \varepsilon (\varepsilon\, uv)_x=\frac{\mu}{3}u_{xxt}-\frac{\mu \varepsilon}{3}\partial_x (uu_{xx}+\frac{3}{2}u_x^2).
\end{split}
\end{equation}
On the other hand, we expect $u$ satisfying the following generalized BBM equation \eqref{GBBM-1}.
To this end,  up to the $O(\varepsilon^2)$ terms (where $\varepsilon=O(\sqrt{\mu})$), we first get
\begin{equation}\label{derivation-CH-5}
\begin{split}
&u_t+c u_x+ 3\frac{c^2}{c^2+1}\varepsilon uu_x=O(\varepsilon^2, \mu),
\end{split}
\end{equation}
which gives rise to
\begin{equation}\label{derivation-CH-6}
\begin{split}
&u_{xxx}=-\frac{1}{c} \left(u_{xxt}+ 3\frac{c^2}{c^2+1}\varepsilon (uu_x)_{xx} \right)=O(\varepsilon^2, \mu).
\end{split}
\end{equation}
Hence, we can replace the term $u_{xxx}$ in \eqref{GBBM-1} by this expression to get
\begin{equation}\label{derivation-CH-7}
\begin{split}
&u_t+c u_x+ 3\frac{c^2}{c^2+1}\varepsilon uu_x+\mu \left(\beta-\frac{\alpha}{c} \right)u_{xxt}+\omega_1\varepsilon^2 u^2u_x+\omega_2\varepsilon^3 u^3u_x\\
&=\varepsilon\mu \left( \left(\gamma+\frac{3\alpha\,c}{c^2+1} \right) uu_{xx}+ \left(\frac{\delta-\gamma}{2}+\frac{3\alpha\,c}{c^2+1} \right)u_x^2\right)_x+O(\mu\varepsilon^2, \mu^2).
\end{split}
\end{equation}
Combining \eqref{GBBM-1} with \eqref{derivation-CH-7}, we have
\begin{equation}\label{derivation-CH-8}
\begin{split}
&\varepsilon v_x+\varepsilon h\, (u^2)_x-2\Omega \varepsilon(u\,\varepsilon\,v)_x-\frac{\omega_1}{3}\varepsilon^2 (u^3)_x-\frac{1}{4}\omega_2\varepsilon^3 (u^4)_x\\
&= \left(\frac{1}{3}+\beta-\frac{\alpha}{c} \right)\mu\, u_{xxt}-\varepsilon\mu \left( \left(\frac{1}{3}+\gamma+\frac{3\alpha\,c}{c^2+1} \right) uu_{xx}+ \left(\frac{1+\delta-\gamma}{2}+\frac{3\alpha\,c}{c^2+1} \right)u_x^2\right)_x\\
&\quad +O(\mu\varepsilon^2, \mu^2),
\end{split}
\end{equation}
where $h = \frac{1}{2}-\frac{2\Omega}{c}-\frac{3c^2}{2(c^2+1)}=\frac{c^2-2}{2c^2(c^2+1)}$, which implies by integrating \eqref{derivation-CH-8} with respect to $x$ that
\begin{equation}\label{derivation-CH-9}
\begin{split}
&(1-2\Omega \varepsilon\, u)\varepsilon v =- \varepsilon h\, u^2+\frac{\omega_1}{3}\varepsilon^2 \,u^3+\frac{1}{4}\omega_2\varepsilon^3 \,u^4+ \left(\frac{1}{3}+\beta-\frac{\alpha}{c} \right)\mu\, u_{xt}\\
&-\varepsilon\mu \left( \left(\frac{1}{3}+\gamma+\frac{3\alpha\,c}{c^2+1} \right) uu_{xx}+ \left(\frac{1+\delta-\gamma}{2}+\frac{3\alpha\,c}{c^2+1} \right)u_x^2 \right) +O(\mu\varepsilon^2, \mu^2),
\end{split}
\end{equation}
where we may take the integration constant to be zero since all the terms in \eqref{derivation-CH-9} go to zero as $|x|\rightarrow \infty$.

It then follows that
\begin{equation}\label{derivation-CH-10}
\begin{split}
&\varepsilon v =(1+2\Omega \varepsilon\, u+(2\Omega \varepsilon\, u)^2+O(\varepsilon^3))\bigg(- \varepsilon h\, u^2+\frac{\omega_1}{3}\varepsilon^2 \,u^3+\frac{\omega_2}{4}\varepsilon^3 \,u^4\\
&+ \left(\frac{1}{3}+\beta-\frac{\alpha}{c} \right)\mu\, u_{xt}-\varepsilon\mu\bigg( \left(\frac{1}{3}+\gamma+\frac{3\alpha\,c}{c^2+1} \right) uu_{xx}+ \left(\frac{1+\delta-\gamma}{2}+\frac{3\alpha\,c}{c^2+1} \right)u_x^2\bigg) \\
&\qquad\qquad\qquad\qquad\qquad\qquad\qquad\qquad\qquad\qquad\qquad+O(\mu\varepsilon^2, \mu^2),
\end{split}
\end{equation}
which implies that
\begin{equation}\label{derivation-CH-11}
\begin{split}
&\varepsilon v =- \varepsilon h\, u^2+A_3 \varepsilon^2 \,u^3+ \left(2\Omega A_3+\frac{\omega_2}{4} \right)\varepsilon^3 \,u^4 \\
& \qquad\qquad+\mu \left(\frac{1}{3}+\beta-\frac{\alpha}{c} \right)\, u_{xt}-\varepsilon\mu\bigg(A_1 uu_{xx}+ A_2 u_x^2\bigg) +O(\mu\varepsilon^2, \mu^2),
\end{split}
\end{equation}
where $A_1$, $A_2$, and $A_3$ are defined in \eqref{def-coeff-1}.
Thanks to \eqref{GBBM-1} and \eqref{derivation-CH-6}, we derive
\begin{equation}\label{derivation-CH-12}
\begin{split}
\varepsilon v_t = & - \varepsilon h\, (u^2)_t+A_3 \varepsilon^2 \,(u^3)_t+ \left(2\Omega A_3+\frac{\omega_2}{4} \right)\varepsilon^3 \,(u^4)_t +\mu \left(\frac{1}{3}+\beta-\frac{\alpha}{c} \right)\, u_{xtt}\\
& -\varepsilon\mu\bigg(A_1 uu_{xx}+ A_2 u_x^2\bigg)_t +O(\mu\varepsilon^2, \mu^2) \\
= & \ 2 \varepsilon h\,c\, uu_x+ \left(2h\frac{3c^2}{c^2+1}-3A_3c \right) \varepsilon^2 \,u^2u_x\\
&+ \left(2h\omega_1-3A_3\frac{3c^2}{c^2+1}-c(8\Omega A_3+\omega_2) \right)\varepsilon^3 \,u^3u_x -\mu\,c \left(\frac{1}{3}+\beta-\frac{\alpha}{c}\right)\, u_{xxt}\\
& +\varepsilon\mu\bigg(A_4 uu_{xx}+ A_5 u_x^2\bigg)_x +O(\mu\varepsilon^2, \mu^2),
\end{split}
\end{equation}
where $A_4$ and $A_5$ are given in \eqref{def-coeff-1}.

Substituting \eqref{derivation-CH-11} and \eqref{derivation-CH-12} into \eqref{derivation-CH-1} yields
\begin{align}
& u_t+c u_x+ 3\frac{c^2}{c^2+1}\varepsilon uu_x+c \left(2h\frac{c^2}{c^2+1}-3A_3c -3h \right)\varepsilon^2 u^2u_x \nonumber\\
& +c \left(2h\omega_1-3A_3\frac{c^2}{c^2+1}-c(8\Omega A_3+\omega_2)+4A_3 \right)\varepsilon^3 u^3u_x -c^2 \left(\frac{1}{3}+\beta-\frac{\alpha}{c} \right)\mu\, u_{xxt} \nonumber \\
& =  \ \varepsilon\mu \left(\left(c^2 \left(\frac{1}{3}+\beta-\frac{\alpha}{c}\right)-cA_4 \right) uu_{xx}-cA_5 u_x^2 \right)_x. \label{derivation-CH-13}
\end{align}
Comparing \eqref{derivation-CH-7} with \eqref{derivation-CH-13}, we take $\omega_1$, $\omega_2$, $\alpha$, $\beta$, $\gamma$, and $\delta$ in \eqref{GBBM-1} to satisfy the relation
\begin{equation*}\label{derivation-CH-14a}
\begin{split}
&\omega_1 = c \left(2h\frac{c^2}{c^2+1}-3A_3c -3h \right),\, \ \   \omega_2 = c \left(2h\omega_1-3A_3\frac{c^2}{c^2+1}-c(8\Omega A_3+\omega_2)+4A_3 \right),
\end{split}
\end{equation*}
and
\begin{equation}\label{derivation-CH-14}
\begin{split}
&\beta-\frac{\alpha}{c}=-c^2 \left(\frac{1}{3}+\beta-\frac{\alpha}{c} \right), \quad \gamma+\frac{3\alpha\,c}{c^2+1}=c^2 \left(\frac{1}{3}+\beta-\frac{\alpha}{c} \right)-cA_4,\\
&  -cA_5=\frac{\delta-\gamma}{2}+\frac{3\alpha\,c}{c^2+1}.
\end{split}
\end{equation}
Therefore, there appears the relations
\begin{equation}\label{derivation-CH-15a}
\begin{split}
&\omega_1 = \frac{-3c(c^2-1)(c^2-2)}{2(1+c^2)^3}, \, \  \omega_2 =\frac{(c^2-2)(c^2-1)^2(8c^2-1)}{2(1+c^2)^5},
\end{split}
\end{equation}
and \begin{equation}\label{derivation-CH-15}
\begin{split}
&\beta-\frac{\alpha}{c}=-\frac{c^2}{3(c^2+1)}, \quad \gamma+\frac{3\alpha\,c}{c^2+1}=-\frac{c^2(5c^2-1)}{3(c^2+1)^3},\\
&  \frac{\delta-\gamma}{2}+\frac{3\alpha\,c}{c^2+1}=-\frac{c^2(3c^4+11c^2+5)}{6(c^2+1)^3},
\end{split}
\end{equation}
which gives a one-parameter expression of $\alpha$, $\beta$, $\gamma$, and $\delta$ with respect to  $p\in \mathbb{R}$
\begin{equation*}
\begin{split}
&\alpha = c\,p, \quad \beta=-\frac{c^2}{3(c^2+1)}+p, \quad \gamma=-\frac{c^2(5c^2-1)}{3(c^2+1)^3}-\frac{3c^2}{c^2+1}p,\\
& \delta=-\frac{c^2(3c^4+16c^2+4)}{3(c^2+1)^3}-\frac{9c^2}{c^2+1}p.
\qedhere
\end{split}
\end{equation*}
\end{proof}

We now generalize Theorem \ref{prop-one-parameter} by replacing the vertically averaged velocity $u$ in \eqref{R-GN-2} with the horizontal velocity
$u^{\theta}$ evaluated at the level line $\theta$ of the fluid domain,
so that $\theta=0$ and $\theta=1$ correspond to the bottom and surface, respectively. The
introduction of $\theta$ allows us to derive an approximation consistent with \eqref{R-GN-2} and build  on
a two-parameter family of equations of the form \eqref{GBBM-1}.

\begin{theorem}\label{prop-two-parameters}
Let $p \in \mathbb{R}$, $\theta \in [0, 1]$ and $\lambda=\frac{1}{2}(\theta^2-\frac{1}{3})$.  Assume that
\begin{equation}\label{derivation-CH-21}
\begin{split}
&\alpha = c\,(p+\lambda), \quad \beta=-\frac{c^2}{3(c^2+1)}+p+\lambda, \quad \gamma=-\frac{c^2(5c^2-1)}{3(c^2+1)^3}-\frac{3c^2}{c^2+1}p,\\
& \delta=-\frac{c^2(3c^4+16c^2+4)}{3(c^2+1)^3}-\frac{3c^2}{c^2+1}(3p+\lambda).
\end{split}
\end{equation}
Then there exists $D>0$ such that:
for all $s \geq 0$ and $T>0$, and for all bounded family $(u^{\varepsilon, \mu, \theta})_{(\varepsilon, \mu)\in \mathcal{P}_{\mu_0, M}} \in L^{\infty}([0, \frac{T}{\varepsilon}], H^{s+D}(\mathbb{R}))$ solving \eqref{GBBM-1},
the family $\{\eta^{\varepsilon, \mu}, u^{\varepsilon, \mu}\}_{(\varepsilon, \mu)\in \mathcal{P}_{\mu_0, M}}$, with (omitting the indexes $\varepsilon, \mu$)
\begin{equation}\label{relation-eta-u-1}
\begin{split}
&u=u^{\theta}+\mu \lambda u^{\theta}_{xx}+\frac{2\lambda}{c}u^{\theta}u^{\theta}_{xx},\quad \eta=F(u^{\theta})+\frac{\lambda}{c} \mu u_{xt}+2\mu\varepsilon \frac{\lambda^2}{c}(1-hc^2)u^{\theta}u^{\theta}_{xx}\,\mbox{with}\\
&F(u) = \frac{1}{c}u- \varepsilon h\, u^2+A_3 \varepsilon^2 \,u^3+ \left(2\Omega A_3+\frac{\omega_2}{4} \right)\varepsilon^3 \,u^4 \\
& \quad \qquad\qquad +\mu \left(\frac{1}{3}+\beta-\frac{\alpha}{c}\right)\, u_{xt}-\varepsilon\mu\bigg(A_1 uu_{xx}+ A_2 u_x^2\bigg)
\end{split}
\end{equation}
is consistent (of order $s$ and on $[0, \frac{T}{\varepsilon}]$) with the R-GN equations \eqref{R-GN-2}.
\end{theorem}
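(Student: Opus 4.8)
The plan is to deduce Theorem \ref{prop-two-parameters} from Theorem \ref{prop-one-parameter} by a change of unknowns. The relation $u=u^{\theta}+\mu\lambda u^{\theta}_{xx}+\frac{2\lambda}{c}u^{\theta}u^{\theta}_{xx}$ in \eqref{relation-eta-u-1} is exactly the Green--Naghdi correspondence between the vertically averaged velocity $u$ and the horizontal velocity $u^{\theta}$ at the level line $\theta$, with $\lambda=\frac{1}{2}(\theta^{2}-\frac{1}{3})$. Since the R-GN system \eqref{R-GN-2} is written in terms of the averaged velocity $u$, and since Theorem \ref{prop-one-parameter} already certifies that the one-parameter family \eqref{GBBM-1} with coefficients \eqref{derivation-CH-16} (in the unknown $u$) is consistent with \eqref{R-GN-2}, it suffices to prove two things: (i) if $u^{\theta}$ solves \eqref{GBBM-1} with the two-parameter coefficients \eqref{derivation-CH-21}, then the reconstructed $u$ solves \eqref{GBBM-1} with the one-parameter coefficients \eqref{derivation-CH-16} for the \emph{same} $p$, up to an $O(\mu^{2})$ remainder in $H^{s}$; and (ii) the surface variable $\eta$ displayed in \eqref{relation-eta-u-1} coincides with $F(u)$ up to $O(\mu^{2})$. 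Granting (i)--(ii), Theorem \ref{prop-one-parameter} applies to the pair $(u,F(u))$ and yields consistency of order $s$; note that the proof of Theorem \ref{prop-one-parameter} uses \eqref{GBBM-1} only modulo $O(\mu^{2})$, so the remainder in (i) is harmless.

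For step (i) I would substitute the expansion of $u$ into the left-hand side of \eqref{GBBM-1} and collect terms up to order $\mu^{2}$, systematically using the leading-order transport relation \eqref{derivation-CH-5}, namely $u^{\theta}_{t}+cu^{\theta}_{x}=-3\frac{c^{2}}{c^{2}+1}\varepsilon u^{\theta}u^{\theta}_{x}+O(\varepsilon^{2},\mu)$ and its $x$-derivatives, to trade time derivatives for space derivatives inside every correction term. The linear piece $\mu\lambda u^{\theta}_{xx}$, when inserted into $u_{t}+cu_{x}$, contributes $\mu\lambda(u^{\theta}_{xxt}+cu^{\theta}_{xxx})$; this is precisely the $\mu$-dispersive contribution that turns the coefficients $(\alpha,\beta)=(cp,-\frac{c^{2}}{3(c^{2}+1)}+p)$ of the $u$-equation into $(c(p+\lambda),-\frac{c^{2}}{3(c^{2}+1)}+p+\lambda)$, i.e. the shift $p\mapsto p+\lambda$ recorded in \eqref{derivation-CH-21}. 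The same linear piece enters the advection term $3\frac{c^{2}}{c^{2}+1}\varepsilon uu_{x}$ and produces an $O(\varepsilon\mu)$ quantity proportional to $u^{\theta}u^{\theta}_{xxx}+u^{\theta}_{x}u^{\theta}_{xx}$; the quadratic corrector $\frac{2\lambda}{c}u^{\theta}u^{\theta}_{xx}$ (coupled, through the mass equation, to the $\eta$-reconstruction) is designed to absorb the $u^{\theta}u^{\theta}_{xxx}$ part while leaving the correct contribution to $\delta$, which is why $\gamma$ is unchanged and $\delta$ picks up exactly $-\frac{3c^{2}}{c^{2}+1}\lambda$ in \eqref{derivation-CH-21}.

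Step (ii) is then a direct computation: one linearizes $F$ about $u^{\theta}$ using $u-u^{\theta}=\mu\lambda u^{\theta}_{xx}+\frac{2\lambda}{c}u^{\theta}u^{\theta}_{xx}$, retains only the terms surviving modulo $O(\mu^{2})$, and invokes \eqref{derivation-CH-5} to rewrite $\mu u^{\theta}_{xx}$ through $u_{xt}$; the $O(\mu)$ and $O(\varepsilon\mu)$ leftovers are exactly the two terms $\frac{\lambda}{c}\mu u_{xt}$ and $2\mu\varepsilon\frac{\lambda^{2}}{c}(1-hc^{2})u^{\theta}u^{\theta}_{xx}$ appended to $F(u^{\theta})$ in \eqref{relation-eta-u-1}. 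Finally, because the map $u^{\theta}\mapsto u$ loses two spatial derivatives and $F$ loses further derivatives, I would fix $D$ to be the constant furnished by Theorem \ref{prop-one-parameter} increased by the number of derivatives consumed by the change of unknowns, so that $u^{\theta}\in L^{\infty}([0,T/\varepsilon],H^{s+D})$ still provides enough regularity to close the estimates.

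The main obstacle is the $O(\varepsilon\mu)$ bookkeeping in step (i). The substitution unavoidably generates a $uu_{xxx}$-type term that has no counterpart in \eqref{derivation-CH-16}, and the scheme closes only if the quadratic corrector in $u$ together with the two extra terms in the $\eta$-relation are tuned to absorb it; confirming that \eqref{derivation-CH-21} is exactly the coefficient set for which all sub-$O(\mu^{2})$ discrepancies cancel --- in both R-GN equations simultaneously --- is the delicate algebraic heart of the argument, and is where one must be most careful to keep every $O(\varepsilon\mu)$ and $O(\varepsilon^{3})$ contribution.
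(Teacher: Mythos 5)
Your overall strategy --- push everything through the change of unknowns and then invoke Theorem~\ref{prop-one-parameter} as a black box --- is not the paper's route, and it has a gap at your step (i) that I do not think can be repaired within that framework. The claim that the reconstructed $u$ satisfies \eqref{GBBM-1} with the one-parameter coefficients \eqref{derivation-CH-16} for the \emph{same} $p$ is false. Writing $u^{\theta}=u-\mu\lambda u_{xx}+O(\varepsilon\mu,\mu^{2})$ and substituting into the two-parameter equation, the only contributions at orders $\mu$ and $\varepsilon\mu$ come from $u_t+cu_x$ (which produces $-\mu\lambda(u_{xxt}+cu_{xxx})$) and from $\tfrac{3c^{2}}{c^{2}+1}\varepsilon uu_x$ (which produces $-\tfrac{3c^{2}}{c^{2}+1}\varepsilon\mu\lambda(u_xu_{xx}+uu_{xxx})$); the quadratic corrector, being itself of size $O(\varepsilon\mu)$, only generates $O(\varepsilon^{2}\mu,\varepsilon\mu^{2})$ in the $u$-equation and therefore cannot ``absorb the $u^{\theta}u^{\theta}_{xxx}$ part'' there, contrary to what you assert. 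The upshot is that $u$ satisfies \eqref{GBBM-1} with
\begin{equation*}
\alpha_u=\alpha-c\lambda=cp,\qquad \beta_u=\beta-\lambda,\qquad \gamma_u=\gamma+\tfrac{3c^{2}}{c^{2}+1}\lambda,\qquad \delta_u=\delta+\tfrac{3c^{2}}{c^{2}+1}\lambda .
\end{equation*}
With \eqref{derivation-CH-21} this makes $\delta_u$ equal to the one-parameter value, but $\gamma_u+\tfrac{3\alpha_u c}{c^{2}+1}=-\tfrac{c^{2}(5c^{2}-1)}{3(c^{2}+1)^{3}}+\tfrac{3c^{2}}{c^{2}+1}\lambda$, violating the second constraint in \eqref{derivation-CH-15} whenever $\lambda\neq0$; no renormalization of $p$, nor the equivalence generated by adding multiples of $\mu\partial_x^{2}\bigl(u_t+cu_x+\tfrac{3c^{2}}{c^{2}+1}\varepsilon uu_x\bigr)$, places the $u$-equation in the one-parameter family. (That family is exactly one equivalence class; the two-parameter family is genuinely larger, which is the whole point of Theorem~\ref{prop-two-parameters}.) Your step (ii) has the companion defect: the $F$ in \eqref{relation-eta-u-1} is built from the two-parameter coefficients, and its $\varepsilon\mu$-coefficients $A_1,A_2$ differ from the one-parameter ones by terms proportional to $\lambda$, so $\eta$ is not the one-parameter $F(u)$ up to $O(\mu^{2})$ either.

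The paper accordingly does not reduce to Theorem~\ref{prop-one-parameter}: it substitutes the expressions for $u$ and $\eta$ in terms of $u^{\theta}$ directly into the mass equation of \eqref{R-GN-2}. There the product $(1+\varepsilon\eta)u$ generates the extra term $2\lambda\mu\varepsilon(1+hc^{2})(u^{\theta}u^{\theta}_{xx})_x$ of \eqref{derivation-CH-18}, and since $1+hc^{2}=\tfrac{3c^{2}}{2(c^{2}+1)}$ this exactly cancels the $(u^{\theta}u^{\theta}_{xx})_x$ half of $\lambda\mu(u^{\theta}_t+cu^{\theta}_x)_{xx}=-\tfrac{3c^{2}}{c^{2}+1}\lambda\mu\varepsilon\bigl(u^{\theta}u^{\theta}_{xx}+(u^{\theta}_x)^{2}\bigr)_x$, leaving only the $((u^{\theta}_x)^{2})_x$ half. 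That residual is precisely what shifts the third relation of \eqref{derivation-CH-20}, hence $\delta$, by a $\lambda$-dependent amount while leaving $\gamma$ untouched --- the asymmetry between $\gamma$ and $\delta$ in \eqref{derivation-CH-21} that your scheme cannot reproduce. The cancellation lives in the coupled system (mass equation plus the $\eta$-reconstruction), not in a scalar equation for $u$, so a correct proof must carry the $O(\varepsilon\mu)$ bookkeeping through both R-GN equations as the paper does. You did correctly identify the $p\mapsto p+\lambda$ shift in $(\alpha,\beta)$ and the need to expand $F$ about $u^{\theta}$, and you honestly flagged the $O(\varepsilon\mu)$ accounting as the delicate point; but that accounting is exactly where the proposed reduction breaks down.
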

\begin{proof}
Taking  $u=u^{\theta}+\mu \lambda u^{\theta}_{xx}+\frac{2\lambda}{c}u^{\theta}u^{\theta}_{xx}$ in \eqref{GBBM-1} with $\lambda=\frac{1}{2}(\theta^2-\frac{1}{3})$ and $\theta \in [0, 1]$, and $\omega_1$ and $\omega_2$ satisfying \eqref{derivation-CH-14a}, it then follows  from \eqref{derivation-CH-11} that
\begin{equation}\label{derivation-CH-17}
\begin{split}
\eta&=F(u)  := \frac{1}{c}u+\varepsilon\, v =\frac{1}{c}u- \varepsilon h\, u^2+A_3 \varepsilon^2 \,u^3+ \left(2\Omega A_3+\frac{\omega_2}{4} \right)\varepsilon^3 \,u^4 \\
& \quad \qquad\qquad +\mu \left(\frac{1}{3}+\beta-\frac{\alpha}{c} \right)\, u_{xt}-\varepsilon\mu\bigg(A_1 uu_{xx}+ A_2 u_x^2\bigg) +O(\mu\varepsilon^2, \mu^2)\\
&=F(u^{\theta})+\frac{\lambda}{c} \mu u_{xt}+2\mu\varepsilon \frac{\lambda^2}{c}(1-hc^2)u^{\theta}u^{\theta}_{xx}+O(\mu\varepsilon^2, \mu^2).
\end{split}
\end{equation}
Substituting \eqref{derivation-CH-17} into the first equation of \eqref{R-GN-2}, we obtain
\begin{equation}\label{derivation-CH-18}
\begin{split}
&u^{\theta}_t+c u^{\theta}_x+ \frac{3c^2}{c^2+1}\varepsilon u^{\theta}u^{\theta}_x+\omega_1\varepsilon^2 (u^{\theta})^2u^{\theta}_x+\omega_2\varepsilon^3 (u^{\theta})^3u^{\theta}_x\\
&-c^2 \left(\frac{1}{3}+\beta-\frac{\alpha}{c} \right)\mu\, u^{\theta}_{xxt}+\lambda \mu (u^{\theta}_t+c u^{\theta}_x)_{xx}+2\lambda \mu\varepsilon(1+hc^2)(u^{\theta}u^{\theta}_{xx})_x\\
&=\varepsilon\mu \left( \left(c^2 \left(\frac{1}{3}+\beta-\frac{\alpha}{c}\right)-cA_4 \right) u^{\theta}u^{\theta}_{xx}-cA_5 (u^{\theta}_x)^2 \right)_x+O(\mu\varepsilon^2, \mu^2),
\end{split}
\end{equation}
which implies by an iteration argument that $\lambda \mu (u^{\theta}_t+c u^{\theta}_x)_{xx}=-\frac{3c^2}{c^2+1}\lambda \mu \varepsilon \left(u^{\theta}u^{\theta}_{xx}+(u^{\theta}_x)^2 \right)_x+O(\mu\varepsilon^2, \mu^2)$, and then
\begin{equation}\label{derivation-CH-19}
\begin{split}
&u^{\theta}_t+c u^{\theta}_x+ \frac{3c^2}{c^2+1}\varepsilon u^{\theta}u^{\theta}_x+\omega_1\varepsilon^2 (u^{\theta})^2u^{\theta}_x+\omega_2\varepsilon^3 (u^{\theta})^3u^{\theta}_x-c^2 \left(\frac{1}{3}+\beta-\frac{\alpha}{c} \right)\mu\, u^{\theta}_{xxt}\\
&=\varepsilon\mu\left( \left(c^2 \left(\frac{1}{3}+\beta-\frac{\alpha}{c} \right)-cA_4 \right) u^{\theta}u^{\theta}_{xx}+ \left(\frac{3c^2}{c^2+1}\lambda-cA_5 \right) (u^{\theta}_x)^2\right)_x+O(\mu\varepsilon^2, \mu^2).
\end{split}
\end{equation}
Comparing \eqref{derivation-CH-7} with \eqref{derivation-CH-19}, we take $\alpha$, $\beta$, $\gamma$, and $\delta$ in \eqref{GBBM-1} to satisfy the relation
\begin{equation}\label{derivation-CH-20}
\begin{split}
&\beta-\frac{\alpha}{c}=-c^2 \left(\frac{1}{3}+\beta-\frac{\alpha}{c} \right), \quad \gamma+\frac{3\alpha\,c}{c^2+1}=c^2 \left(\frac{1}{3}+\beta-\frac{\alpha}{c} \right)-cA_4,\\
&  \frac{3c^2}{c^2+1}\lambda-cA_5=\frac{\delta-\gamma}{2}+\frac{3\alpha\,c}{c^2+1}.
\end{split}
\end{equation}
Then we deduce  the following two-parameter expression of $\alpha$, $\beta$, $\gamma$, and $\delta$ with respect to the parameter $p\in \mathbb{R}$ and $\lambda=\frac{1}{2}(\theta^2-\frac{1}{3})$ with $\theta \in [0, 1],$ that is,
\begin{equation*}
\begin{split}
&\alpha = c\,(p+\lambda), \quad \beta=-\frac{c^2}{3(c^2+1)}+p+\lambda, \quad \gamma=-\frac{c^2(5c^2-1)}{3(c^2+1)^3}-\frac{3c^2}{c^2+1}p,\\
& \delta=-\frac{c^2(3c^4+16c^2+4)}{3(c^2+1)^3}-\frac{3c^2}{c^2+1}(3p+\lambda).
\end{split}
\end{equation*}
This completes the proof of Theorem  \ref{prop-two-parameters}.
\end{proof}
\begin{remark}\label{rmk-CH-eqns-1}
  In order to get the R-CH equation, it is required that
\begin{equation}\label{derivation-CH-22}
\begin{split}
&-\frac{2c^2}{(c^2+1)}\beta=2\gamma=\delta,
\end{split}
\end{equation}
which gives
\begin{equation}\label{derivation-CH-23}
\begin{split}
 &\lambda+p =\frac{-(c^4+6c^2-1)}{6(c^2+1)^2}, \quad \lambda-p =\frac{c^4+2c^2+2)}{6(c^2+1)^2}.
\end{split}
\end{equation}
Or, what is the same,
\begin{equation}\label{derivation-CH-24}
\begin{split}
 &\lambda=\frac{c^4-2c^2+5)}{12(c^2+1)^2}, \quad p =\frac{-(3c^4+10c^2+3)}{12(c^2+1)^2}.
\end{split}
\end{equation}
Consequently,   the R-CH equation \eqref{R-CH-1} is reestablished. 

\end{remark}

\renewcommand{\theequation}{\thesection.\arabic{equation}}
\setcounter{equation}{0}


\section{Equation for the surface elevation $ \eta$}\label{Sec_surface}

Proceeding a similar proof in Theorem \ref{prop-one-parameter}, we may derive the following evolution of the surface elevation $ \eta$,
\begin{equation}\label{free-GBBM-1}
\begin{split}
&\eta_t+c \eta_x+ \varepsilon \, B\, \eta \eta_x+\bar{\omega}_1\varepsilon^2 \eta^2\eta_x+\bar{\omega}_2\varepsilon^3 \eta^3\eta_x+\mu(\alpha \eta_{xxx}+\beta \eta_{xxt})\\
&\ \ =\varepsilon\mu(\gamma \eta\eta_{xxx}+\delta \eta_x\eta_{xx}),
\end{split}
\end{equation}
which can also be used to construct an approximate solution consistent with the R-GNequations \eqref{R-GN-2}.

To see this, inverting \eqref{eta u} to consider
\begin{equation}\label{u eta}
u := c \eta+\varepsilon \bar{v}
\end{equation}
with $\bar v$ to be determined later, we get from the first equation of\eqref{R-GN-2} that
\begin{equation}\label{free-derivation-CH-1}
\begin{split}
\eta_t+c\eta_x+(\varepsilon\, \bar{v} (1+\varepsilon \eta))_x+\varepsilon (\eta^2)_x=0.
\end{split}
\end{equation}
Plugging \eqref{u eta} into the second equation in \eqref{R-GN-2}, and using the fact $c+2\, \Omega=\frac{1}{c}$, we have
\begin{equation}\label{free-derivation-CH-2}
\begin{split}
&\eta_t+c \eta_x+\varepsilon\,c \,\bar{v}_t+\varepsilon\,\frac{c^3}{2} (\eta^2)_x+\varepsilon\,c^2(\eta\, \varepsilon\, \bar{v})_x +\varepsilon\,\frac{c}{2}\,((\varepsilon\bar{v})^2)_x\\
&=\frac{c^2}{3}\, \mu \, \eta_{txx}+ \mu\, \frac{c}{3}(\varepsilon\bar{v})_{xxt}+\mu\,\varepsilon\bigg(-\frac{c^3}{3}\eta\eta_{xx}
-\frac{c^3}{2}\eta_x^2\bigg)_x+O(\varepsilon^4, \mu\varepsilon^2, \mu^2).
\end{split}
\end{equation}

We now assume that  $\eta$ satisfies  the following generalized BBM equation \eqref{free-GBBM-1}.
It then follows  that up to the $O(\varepsilon^2)$ terms (where $\varepsilon=O(\sqrt{\mu})$)
\begin{equation}\label{free-derivation-CH-5}
\begin{split}
&\eta_t+c \eta_x+ \varepsilon\, B\, \eta\eta_x=O(\varepsilon^2, \mu),
\end{split}
\end{equation}
which also yields that
\begin{equation}\label{free-derivation-CH-5a}
\begin{split}
&\eta_{xt}=-c \eta_{xx}- \varepsilon\, B\, (\eta\eta_{xx}+\eta_x^2)+O(\varepsilon^2, \mu),
\end{split}
\end{equation}
and
\begin{equation}\label{free-derivation-CH-6}
\begin{split}
&\eta_{xxx}=-\frac{1}{c}(\eta_{xxt}+ \varepsilon\, B\,(\eta\eta_x)_{xx})+O(\varepsilon^2, \mu).
\end{split}
\end{equation}
Hence, we can replace the $\eta_{xxx}$ term of \eqref{free-GBBM-1} by this expression to get
\begin{equation}\label{free-derivation-CH-7}
\begin{split}
&\eta_t+c \eta_x+ \varepsilon\, B\,\eta\eta_x+\mu \left(\beta-\frac{\alpha}{c} \right)\eta_{xxt}+\bar{\omega}_1\varepsilon^2 \eta^2\eta_x+\bar{\omega}_2\varepsilon^3 \eta^3\eta_x\\
&=\varepsilon\mu\bigg( \left(\gamma+\frac{\alpha\,B}{c} \right) \eta\eta_{xx}+ \left(\frac{\delta-\gamma}{2}+\frac{\alpha\,B}{c} \right)\eta_x^2\bigg)_x+O(\mu\varepsilon^2, \mu^2).
\end{split}
\end{equation}
Combining \eqref{free-derivation-CH-1} with \eqref{free-derivation-CH-7}, we have
\begin{equation}\label{free-derivation-CH-8}
\begin{split}
&(\varepsilon\, \bar{v} (1+\varepsilon \eta))_x=\bigg(\frac{B}{2}-c\bigg)\varepsilon (\eta^2)_x+\mu \left(\beta-\frac{\alpha}{c} \right)c\,\eta_{txx}+\frac{1}{3}\bar{\omega}_1\varepsilon^2 (\eta^3)_x\\
&+\frac{1}{4}\bar{\omega}_2\varepsilon^3 (\eta^4)_x-\varepsilon\mu\bigg( \left(\gamma+\frac{\alpha\,B}{c} \right) \eta\eta_{xx}+ \left(\frac{\delta-\gamma}{2}+\frac{\alpha\,B}{c}\right)\eta_x^2\bigg)_x+O(\mu\varepsilon^2, \mu^2),
\end{split}
\end{equation}
which implies by integrating \eqref{derivation-CH-8} with respect to $x$ that
\begin{equation}\label{free-derivation-CH-9}
\begin{split}
&\varepsilon\, \bar{v} (1+\varepsilon \eta)=\bigg(\frac{B}{2}-c\bigg)\varepsilon \eta^2+\mu \left(\beta-\frac{\alpha}{c} \right)c\,\eta_{tx}+\frac{1}{3}\bar{\omega}_1\varepsilon^2 \eta^3\\
&+\frac{1}{4}\bar{\omega}_2\varepsilon^3 \eta^4-\varepsilon\mu\bigg( \left(\gamma+\frac{\alpha\,B}{c} \right) \eta\eta_{xx}+ \left(\frac{\delta-\gamma}{2}+\frac{\alpha\,B}{c} \right)\eta_x^2\bigg)+O(\mu\varepsilon^2, \mu^2).
\end{split}
\end{equation}

It thus follows from the Taylor series expansion of $ (1 + \epsilon \eta)^{-1} $ in terms of $ \epsilon $ that
\begin{equation}\label{free-derivation-CH-10}
\begin{split}
&\varepsilon \bar{v} =(1-\varepsilon\eta+\varepsilon^2\eta^2+O(\varepsilon^3))\bigg(\left(\frac{B}{2}-c \right)\varepsilon \eta^2+\mu \left(\beta-\frac{\alpha}{c} \right)c\,\eta_{tx}+\frac{1}{3}\bar{\omega}_1\varepsilon^2 \eta^3\\
&+\frac{1}{4}\bar{\omega}_2\varepsilon^3 \eta^4-\varepsilon\mu\bigg( \left(\gamma+\frac{\alpha\,B}{c} \right) \eta\eta_{xx}+ \left(\frac{\delta-\gamma}{2}+\frac{\alpha\,B}{c} \right)\eta_x^2\bigg)+O(\mu\varepsilon^2, \mu^2)\bigg).
\end{split}
\end{equation}
Or, what is the same,
\begin{equation}\label{free-derivation-CH-11}
\begin{split}
&\varepsilon \bar{v} = \varepsilon \, \left(\frac{B}{2}-c \right)\,\eta^2+\mu \left(\beta-\frac{\alpha}{c} \right)\,\eta_{xt}+ \left(\frac{\bar{\omega}_1}{3}-\frac{B}{2}+c \right) \varepsilon^2 \eta^3 \\
&+ \left(\frac{\bar{\omega}_2}{4}-\frac{\bar{\omega}_1}{3}+\frac{B}{2}-c \right)\varepsilon^3 \eta^4\\
&-\varepsilon\mu\bigg( \left(\gamma+\frac{\alpha\,B}{c}-\beta\, c+\alpha \right) \eta\eta_{xx}+ \left(\frac{\delta-\gamma}{2}+\frac{\alpha\,B}{c} \right)\eta_x^2\bigg)+O(\mu\varepsilon^2, \mu^2).
\end{split}
\end{equation}
Hence it is adduced  from  \eqref{free-derivation-CH-7} that
\begin{equation}\label{free-derivation-CH-12}
\begin{split}
&\varepsilon \, c\, \bar{v}_t = \varepsilon \,(-2c^2) \left(\frac{B}{2}-c \right)\, \eta \eta_x+\varepsilon^2 \,\bar{A}_1\,\eta^2\eta_x+\varepsilon^3 \,\bar{A}_2\,\eta^3\eta_x\\
&+\mu(\alpha\, c-\beta \,c^2)\eta_{xxt} +\varepsilon\mu\bigg(\bar{A}_3 \eta\eta_{xx}+ \bar{A}_4\eta_x^2\bigg)_x +O(\mu\varepsilon^2, \mu^2),
\end{split}
\end{equation}
where
\begin{equation}\label{free-derivation-CH-13}
\begin{split}
\bar{A}_1:= & -2cB \left(\frac{B}{2}-c \right)-3c^2 \left(\frac{\bar{\omega}_1}{3}-\frac{B}{2}+c\right) =-c^2 \bar{\omega}_1+(3c^2-2cB)\left(\frac{B}{2}-c \right),\\
\bar{A}_2:= & -2c\, \bar{\omega}_1 \left(\frac{B}{2}-c \right)-3cB \left(\frac{\bar{\omega}_1}{3}-\frac{B}{2}+c \right)-4c^2 \left(\frac{\bar{\omega}_2}{4}- \left(\frac{\bar{\omega}_1}{3}-\frac{B}{2}+c\right)\right)\\
= & -c^2\,\bar{\omega}_2+ \left(\frac{10}{3}c^2-2cB \right) \bar{\omega}_1 -(4c^2-3cB)\left(\frac{B}{2}-c \right),\\
\bar{A}_3:= &\ 2c^2 \left(\frac{B}{2}-c \right) \left(\beta-\frac{\alpha}{c} \right)+c\,B\,(c\,\beta-\alpha)+c^2\left(\gamma+\frac{\alpha\,B}{c}-\beta\, c+\alpha \right) \\
= &\ (2c^2\,B-3c^3)\left(\beta-\frac{\alpha}{c} \right)+c^2 \left(\gamma+\frac{\alpha\,B}{c} \right), \quad {\rm and} \\
\bar{A}_4:= & -c^2 \left(\frac{B}{2}-c \right) \left(\beta-\frac{\alpha}{c} \right)+c\,B\,(c\,\beta-\alpha)+c^2 \left(\frac{\delta-\gamma}{2}+\frac{\alpha\,B}{c}\right) \\
= &\ \left(\frac{c^2\,B}{2}+c^3 \right) \left(\beta-\frac{\alpha}{c} \right)+c^2 \left(\frac{\delta-\gamma}{2}+\frac{\alpha\,B}{c} \right).
\end{split}
\end{equation}
Similarly, we may obtain
\begin{equation}\label{free-derivation-CH-14}
\begin{split}
&\varepsilon\,c^2(\eta\, \varepsilon\, \bar{v})_x =\varepsilon^2\,3c^2\, \left(\frac{B}{2}-c \right)\eta^2\eta_x+\varepsilon^3\,4c^2\, \left(\frac{\bar{\omega}_1}{3}-\frac{B}{2}+c \right)\eta^3\eta_x\\
&\qquad\qquad\qquad-\varepsilon\mu \, c^3 \left(\beta-\frac{\alpha}{c} \right)(\eta\,\eta_{xx})_x+O(\mu\varepsilon^2, \mu^2),\\
&\varepsilon\,\frac{c}{2}\,((\varepsilon\bar{v})^2)_x=\varepsilon^3\, 2c\,\left(\frac{B}{2}-c \right)^2\,\eta^3\, \eta_x+O(\mu\varepsilon^2, \mu^2),\\
&\mu\, \frac{c}{3}(\varepsilon\bar{v})_{xxt}=-\mu\, \varepsilon\,\frac{2c^2}{3} \left(\frac{B}{2}-c \right)(\eta\,\eta_{xx}+\eta_x^2)_x+O(\mu\varepsilon^2, \mu^2).
\end{split}
\end{equation}
Substituting \eqref{free-derivation-CH-12} and \eqref{free-derivation-CH-14} into \eqref{free-derivation-CH-2}, there appears the equation for the function $ \eta $
\begin{equation}\label{free-derivation-CH-15}
\begin{split}
&\eta_t+c \eta_x+\varepsilon\, \left(c^3-2c^2\left(\frac{B}{2}-c\right)\right)\,\eta\eta_x+\varepsilon^2\,\bigg(3c^2\,\left(\frac{B}{2}-c \right)+\bar{A}_1\bigg) \eta^2\eta_x\\
&+\varepsilon^3\,\bigg(4c^2\, \left(\frac{\bar{\omega}_1}{3}-\frac{B}{2}+c\right)+2c\,\left(\frac{B}{2}-c \right)^2+\bar{A}_2\bigg)\eta^3\eta_x-\mu\, c^2 \left(\beta-\frac{\alpha}{c}+\frac{1}{3} \right)\eta_{xxt}\\
&=\mu\,\varepsilon\bigg(\bar{A}_5\,\eta\eta_{xx}+\bar{A}_6 \,\eta_x^2\bigg)_x+O(\varepsilon^4, \mu\varepsilon^2, \mu^2),
\end{split}
\end{equation}
where
\begin{equation}\label{free-derivation-CH-16}
\begin{split}
&\bar{A}_5:=-\frac{c^3}{3}-4c^2 \left(\frac{B}{2}-c \right) \left(\beta-\frac{\alpha}{c}-\frac{1}{6} \right)-c^2 \left(\gamma+\frac{\alpha\,B}{c} \right) \quad {\rm and} \\
& \bar{A}_6:=-\frac{c^3}{2}+\frac{2c^2}{3} \left(\frac{B}{2}-c \right)- \left(\frac{c^2\,B}{2}+c^3 \right)\left(\beta-\frac{\alpha}{c}\right)-c^2 \left(\frac{\delta-\gamma}{2}+\frac{\alpha\,B}{c}\right).
\end{split}
\end{equation}
On account of  \eqref{free-derivation-CH-7} and \eqref{free-derivation-CH-16}, we take $\bar{\omega}_1$, $\bar{\omega}_2$, $\alpha$, $\beta$, $\gamma$, and $\delta$ in \eqref{free-GBBM-1} to satisfy the relation
\begin{equation}\label{free-derivation-CH-19}
\begin{split}
&B=c^3-2c^2 \left(\frac{B}{2}-c\right),\quad \bar{\omega}_1=3c^2\,\left(\frac{B}{2}-c\right)+\bar{A}_1,\\
&\bar{\omega}_2=4c^2\, \left(\frac{\bar{\omega}_1}{3}-\frac{B}{2}+c \right)+2c\,\left(\frac{B}{2}-c \right)^2+\bar{A}_2,\\
&\beta-\frac{\alpha}{c}=- c^2 \left(\beta-\frac{\alpha}{c}+\frac{1}{3}\right), \quad \gamma+\frac{\alpha\,B}{c}=\bar{A}_5, \quad \frac{\delta-\gamma}{2}+\frac{\alpha\,B}{c}=\bar{A}_6.
\end{split}
\end{equation}
It thus transpires that
\begin{equation}\label{free-derivation-CH-20}
\begin{split}
&B=\frac{3c^3}{1+c^2}, \quad \bar{\omega}_1 = \frac{-3c^3(2-c^2)}{(1+c^2)^3}, \quad  \bar{\omega}_2 =\frac{c^3(2-c^2)(c^6+9c^4-7c^2+3)}{(1+c^2)^5},
\end{split}
\end{equation}
and \begin{equation}\label{free-derivation-CH-21}
\begin{split}
&\beta-\frac{\alpha}{c}=-\frac{c^2}{3(c^2+1)}, \quad \gamma+\frac{3\,c^2}{c^2+1}\alpha=-\frac{c^3(-2c^4+7c^2+3)}{3(c^2+1)^3},\\
&  \frac{\delta-\gamma}{2}+\frac{3\,c^2}{c^2+1}\alpha=-\frac{c^3(-4c^4+6c^2+7)}{6(c^2+1)^3},
\end{split}
\end{equation}
which implies an one-parameter expression of $\alpha$, $\beta$, $\gamma$, and $\delta$ with respect to the parameter $p\in \mathbb{R}$
\begin{equation}\label{free-derivation-CH-22}
\begin{split}
&\alpha = c\,p, \quad \beta=-\frac{c^2}{3(c^2+1)}+p, \quad \gamma=-\frac{c^3(-2c^4+7c^2+3)}{3(c^2+1)^3}-\frac{3c^3}{c^2+1}p,\\
& \delta=-\frac{c^3(-6c^4+13c^2+10)}{3(c^2+1)^3}-\frac{9c^3}{c^2+1}p.
\end{split}
\end{equation}
Consequently, we have
\begin{equation}\label{free-derivation-CH-23}
\begin{split}
&\varepsilon \bar{v} = \varepsilon \, \frac{c(c^2-2)}{2(c^2+1)}\,\eta^2-\mu\,\frac{c^2}{3(1+c^2)}\,\eta_{xt}+ \varepsilon^2\,\frac{c(2-c^2)(c^4+1)}{2(1+c^2)^3} \,  \eta^3 \\
&-\varepsilon^3\,\frac{c(2-c^2)(c^8-5c^6+11c^4+c^2+2)}{4(1+c^2)^5} \,  \eta^4\\
&+\varepsilon\mu\bigg(\frac{c^3(-3c^4+5c^2+2)}{3(1+c^2)^3} \, \eta\eta_{xx}+ \frac{c^3(-4c^4+6c^2+7)}{6(c^2+1)^3}\eta_x^2\bigg)+O(\mu\varepsilon^2, \mu^2).
\end{split}
\end{equation}
In view of the proof of Theorem \ref{prop-one-parameter}, we have the following consistent result for the free surface $ \eta. $

\begin{theorem}\label{thm-free-one-parameter}
Let $p \in \mathbb{R}$. Assume that \eqref{free-derivation-CH-20} and \eqref{free-derivation-CH-22} hold.
Then there exists $D>0$ such that:
for all $s \geq 0$ and $T>0$, and for all bounded family $(\eta^{\varepsilon, \mu})_{(\varepsilon, \mu)\in \mathcal{P}_{\mu_0, M}} \in L^{\infty}([0, \frac{T}{\varepsilon}], H^{s+D}(\mathbb{R}))$ solving \eqref{free-GBBM-1},
the family $\{\eta^{\varepsilon, \mu}, u^{\varepsilon, \mu}\}_{(\varepsilon, \mu)\in \mathcal{P}_{\mu_0, M}}$, with (omitting the indexes $\varepsilon, \mu$)
\begin{equation}\label{free-relation-eta-u-1}
\begin{split}
&u=c\, \eta+  \varepsilon \, \frac{c(c^2-2)}{2(c^2+1)}\,\eta^2-\mu\,\frac{c^2}{3(1+c^2)}\,\eta_{xt}+ \varepsilon^2\,\frac{c(2-c^2)(c^4+1)}{2(1+c^2)^3} \,  \eta^3 \\
&\quad-\varepsilon^3\,\frac{c(2-c^2)(c^8-5c^6+11c^4+c^2+2)}{4(1+c^2)^5} \,  \eta^4\\
&\quad +\varepsilon\mu\bigg(\frac{c^3(-3c^4+5c^2+2)}{3(1+c^2)^3} \, \eta\eta_{xx}+ \frac{c^3(-4c^4+6c^2+7)}{6(c^2+1)^3}\eta_x^2\bigg)
\end{split}
\end{equation}
is consistent (of order $s$ and on $[0, \frac{T}{\varepsilon}]$) with R-GN equations \eqref{R-GN-2}.
\end{theorem}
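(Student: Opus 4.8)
The plan is to imitate the proof of Theorem \ref{prop-one-parameter}, with the roles of the two unknowns interchanged: here $\eta$ is the primary variable, assumed to solve the generalized BBM equation \eqref{free-GBBM-1}, while $u$ is reconstructed from $\eta$. Accordingly, I would set $u = c\eta + \varepsilon\bar{v}$ as in \eqref{u eta}, with $\bar{v}$ taken to be the expression produced in \eqref{free-derivation-CH-23}; one checks at once that this is exactly the change of unknowns \eqref{free-relation-eta-u-1}. The goal is then to verify that the reconstructed pair $\{\eta,u\}$ satisfies both equations of \eqref{R-GN-consistent-1} with remainders $(r_1,r_2)$ bounded in $L^\infty([0,\frac{T}{\varepsilon}];H^s(\mathbb{R}))$, uniformly over $(\varepsilon,\mu)\in\mathcal{P}_{\mu_0,M}$.

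For the first equation of \eqref{R-GN-2}, substituting $u=c\eta+\varepsilon\bar{v}$ yields the exact identity \eqref{free-derivation-CH-1}. I would observe that $\bar{v}$ is in fact \emph{defined} by this first equation: combining \eqref{free-derivation-CH-1} with the reduced form \eqref{free-derivation-CH-7} of the gBBM equation (obtained from \eqref{free-derivation-CH-5}--\eqref{free-derivation-CH-6} by eliminating $\eta_{xxx}$) and integrating in $x$ gives \eqref{free-derivation-CH-8}--\eqref{free-derivation-CH-9}, and a Taylor expansion of $(1+\varepsilon\eta)^{-1}$ then produces the explicit $\bar{v}$ of \eqref{free-derivation-CH-11} and \eqref{free-derivation-CH-23}. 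Consequently the first R-GN equation holds up to exactly the errors committed in passing from \eqref{free-GBBM-1} to \eqref{free-derivation-CH-7} and in truncating the geometric series, all of which are $O(\mu^2)$ under the scaling \eqref{CH scaling}; so consistency of the first equation is automatic once $\bar{v}$ is chosen this way.

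The parameters are pinned down by the second equation. Plugging $u=c\eta+\varepsilon\bar{v}$ into the second equation of \eqref{R-GN-2} and using $c+2\Omega=\frac{1}{c}$ gives the identity \eqref{free-derivation-CH-2}. I would then insert the constructed $\bar{v}$ together with its time derivative $\bar{v}_t$ from \eqref{free-derivation-CH-12} and the auxiliary quantities in \eqref{free-derivation-CH-14}, collapsing \eqref{free-derivation-CH-2} into the closed scalar equation \eqref{free-derivation-CH-15} for $\eta$ alone, with the coefficients $\bar{A}_1,\dots,\bar{A}_6$ recorded in \eqref{free-derivation-CH-13} and \eqref{free-derivation-CH-16}. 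Matching \eqref{free-derivation-CH-15} term by term against the reduced gBBM \eqref{free-derivation-CH-7} that $\eta$ already satisfies forces the algebraic relations \eqref{free-derivation-CH-19}; solving them yields the explicit values \eqref{free-derivation-CH-20} of $B,\bar{\omega}_1,\bar{\omega}_2$ and the one-parameter family \eqref{free-derivation-CH-21}--\eqref{free-derivation-CH-22}. With these choices the two scalar equations coincide modulo $O(\mu^2)$, so the second R-GN equation is consistent as well.

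The remaining, and genuinely delicate, point is the rigorous bookkeeping behind the symbols $O(\mu^2)$, $O(\mu\varepsilon^2)$ and $O(\varepsilon^4)$. I would fix $D$ large enough that every spatial derivative appearing in the quadratic and higher-order terms is controlled, estimate all products by the algebra and tame-product inequalities in $H^{s+D}(\mathbb{R})$ (valid since $s+D>\frac{1}{2}$), and invoke the Camassa--Holm scaling $\varepsilon\le M\sqrt{\mu}$ of \eqref{CH scaling} to absorb each surviving monomial into $C\mu^2$, uniformly on $[0,\frac{T}{\varepsilon}]$ and over $\mathcal{P}_{\mu_0,M}$; the assumed boundedness of $(\eta^{\varepsilon,\mu})$ in $L^\infty([0,\frac{T}{\varepsilon}];H^{s+D})$ supplies the needed uniform constants. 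The main obstacle is precisely this uniformity: one must check that none of the discarded terms secretly carries a negative power of $\varepsilon$ or $\mu$, so that the remainders are genuinely $O(\mu^2)$ in $L^\infty H^s$ rather than merely formally small. Once this is confirmed, the coefficient algebra leading to \eqref{free-derivation-CH-20}--\eqref{free-derivation-CH-22} is mechanical, and the claimed consistency follows exactly as in Theorem \ref{prop-one-parameter}.
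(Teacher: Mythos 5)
Your proposal follows essentially the same route as the paper: the paper's proof of Theorem \ref{thm-free-one-parameter} is precisely the derivation carried out in Section \ref{Sec_surface} preceding the statement (define $\bar v$ from the first R-GN equation via \eqref{free-derivation-CH-8}--\eqref{free-derivation-CH-11}, close the system through the second equation and \eqref{free-derivation-CH-12}--\eqref{free-derivation-CH-15}, and match coefficients in \eqref{free-derivation-CH-19} to obtain \eqref{free-derivation-CH-20}--\eqref{free-derivation-CH-23}), exactly as you describe. Your closing remarks on choosing $D$ and tracking the $O(\mu^2)$ remainders uniformly in $\mathcal{P}_{\mu_0,M}$ make explicit what the paper leaves implicit by appeal to the proof of Theorem \ref{prop-one-parameter}, but do not change the argument.
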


\begin{remark}\label{rmk-free-CH-eqns-1}
 Choosing $p=\frac{2c^4+c^2-4}{9(1+c^2)^2}$, we have
 \begin{equation}\label{free-derivation-CH-24}
\begin{split}
&\alpha = \frac{c(2c^4+c^2-4)}{9(1+c^2)^2}, \quad \beta=-\frac{c^4+2c^2+4}{9(c^2+1)^2}.\\
& \delta=2\gamma=-\frac{2c^3(8c^2-1)}{3(c^2+1)^3},
\end{split}
\end{equation}
Equation \eqref{free-GBBM-1} then reads
\begin{equation}\label{free-GBBM-2}
\begin{split}
&\eta_t+c \eta_x+ \varepsilon \, B\, \eta \eta_x+\bar{\omega}_1\varepsilon^2 \eta^2\eta_x+\bar{\omega}_2\varepsilon^3 \eta^3\eta_x+\mu(\alpha \eta_{xxx}+\beta \eta_{xxt})\\
&=-\frac{2c^3(8c^2-1)}{3(c^2+1)^3}\, \varepsilon\mu\, (2 \eta_x\eta_{xx}+\eta\eta_{xxx}).
\end{split}
\end{equation}

\end{remark}

\renewcommand{\theequation}{\thesection.\arabic{equation}}
\setcounter{equation}{0}


\section{Local well-posedness}\label{Sec_local}

In this section, we investigate local well-posedness of the Cauchy problem to the R-CH equation in \eqref{R-CH-1} and the R-GN equations in \eqref{R-GN-1}.

Let
\begin{equation*}
\|u\|^2_{H^{s+1}_{\mu}} = \|u\|^2_{H^s} + \mu \|\partial_x u\|^2_{H^s}.
\end{equation*}
For some $\mu_0 > 0$ and $M > 0$, we define the Camassa-Holm regime $\mathcal{P}_{\mu_0, M} := \{(\varepsilon, \mu): <\mu \leq \mu_0, 0<\varepsilon \leq M \sqrt{\mu}\}$. Then, we have the following result.
\begin{theorem}\label{thm-r-CH-local}
Let $u_0 \in H^{s+1}_{\mu}(\mathbb{R})$, $\beta>0$, $\mu_0 > 0$ and $M > 0$, $s > \frac{3}{2}$. Then, there exist $T > 0$ and a unique family of solutions $\left(u_{\varepsilon,\mu}\right)|_{(\varepsilon,\mu) \in \mathcal{P}_{\mu_0, M}}$  in $C\left(\left[0,\frac{T}{\varepsilon}\right]; H^{s+1}_{\mu}(\mathbb{R})\right) \cap C^1\left(\left[0,\frac{T}{\varepsilon}\right] ;H^s_{\mu}(\mathbb{R})\right)$ to the Cauchy problem
\begin{equation}\label{r-CH-eqns-1}
\begin{cases}
&\partial_t u-\beta\mu \partial_t u_{xx} + c u_x + 3\alpha\varepsilon uu_x - \beta_0\mu u_{xxx} + \omega_1 \varepsilon^2u^2u_x + \omega_2 \varepsilon^3u^3u_x   \\
&\qquad\qquad \qquad\qquad \qquad\qquad \qquad\qquad \qquad= \alpha\beta\varepsilon\mu( 2u_{x}u_{xx}+uu_{xxx}),\\
&u|_{t = 0} = u_0.
\end{cases}
\end{equation}
\end{theorem}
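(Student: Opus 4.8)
The plan is to treat \eqref{r-CH-eqns-1} as a quasilinear dispersive evolution equation and to close an energy estimate in the scaled space $H^{s+1}_\mu$ whose right-hand side carries an explicit factor of $\varepsilon$, so that a continuation argument produces an existence time of size $T/\varepsilon$ with $T$ depending only on $\|u_0\|_{H^{s+1}_\mu}$ (and on $\mu_0,M,\beta,c$), uniformly for $(\varepsilon,\mu)\in\mathcal P_{\mu_0,M}$. First I would rewrite the equation: since the right-hand side equals $\alpha\beta\varepsilon\mu\,\partial_x\!\big(uu_{xx}+\tfrac12u_x^2\big)$ and the operator acting on $u_t$ is $\mathcal L_\mu:=1-\beta\mu\partial_x^2$ (self-adjoint and, because $\beta,\mu>0$, positive and boundedly invertible), \eqref{r-CH-eqns-1} takes the form $\mathcal L_\mu u_t=G_{\varepsilon,\mu}(u)$, where $G_{\varepsilon,\mu}$ collects the skew-adjoint constant-coefficient dispersive terms $-cu_x+\beta_0\mu u_{xxx}$ together with all the remaining terms, each of which carries at least one power of $\varepsilon$. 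The norm is tailored to this structure: with $J^s:=(1-\partial_x^2)^{s/2}$ one has $\|u\|_{H^{s+1}_\mu}^2\simeq(\mathcal L_\mu J^su,J^su)_{L^2}$, constants depending only on $\beta$, so $\mathcal L_\mu$ is exactly the symmetrizer for the natural energy.

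For existence I would first solve a regularized problem — either a Friedrichs mollification $\mathcal L_\mu u^\delta_t=J_\delta G_{\varepsilon,\mu}(J_\delta u^\delta)$ or a vanishing-viscosity version — which, after inverting $\mathcal L_\mu$, is an ODE with a locally Lipschitz vector field on $H^{s+1}_\mu$ and hence admits a local solution by Cauchy--Lipschitz. The substance of the argument is then to make the resulting bounds uniform in the regularization parameter and in $(\varepsilon,\mu)$.

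The crux is the a priori estimate. Applying $J^s$ to $\mathcal L_\mu u_t=G_{\varepsilon,\mu}(u)$ and pairing with $J^su$ in $L^2$, self-adjointness of $\mathcal L_\mu$ and the fact that $J^s$ commutes with constant-coefficient multipliers give
\[
\tfrac12\tfrac{d}{dt}\|u\|_{H^{s+1}_\mu}^2=(J^sG_{\varepsilon,\mu}(u),J^su)_{L^2},
\]
and the skew-adjoint terms $-cu_x+\beta_0\mu u_{xxx}$ contribute nothing. Every surviving term carries an $\varepsilon$; the delicate one is the quasilinear third-order term $\alpha\beta\varepsilon\mu\,uu_{xxx}$. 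Splitting $J^s(uu_{xxx})=u\,J^su_{xxx}+[J^s,u]u_{xxx}$, the principal piece is symmetrized by integration by parts, the top-order contribution reducing to integrals of the form $\varepsilon\mu\!\int u_x(J^su_x)^2$ and $\varepsilon\mu\!\int u_{xx}\,J^su\,J^su_x$, while the commutator is controlled by the Kato--Ponce estimate. Here the decisive mechanism is that the two derivatives lost at third order are paid for by the weights built into $H^{s+1}_\mu$: writing $\varepsilon\mu\,(\cdots)=\varepsilon\,\big(\sqrt\mu\,\|\partial_x^j u\|_{L^\infty}\big)\big(\sqrt\mu\,\|u\|_{H^{s+1}}\big)(\cdots)$ and using $H^{s}\hookrightarrow W^{1,\infty}$ and $H^{s-1}\hookrightarrow L^\infty$ (both valid for $s>\tfrac32$), each factor is bounded by $\|u\|_{H^{s+1}_\mu}$. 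The lower-order nonlinear terms $3\alpha\varepsilon uu_x$, $\omega_1\varepsilon^2u^2u_x$, $\omega_2\varepsilon^3u^3u_x$ and $2\alpha\beta\varepsilon\mu u_xu_{xx}$ are handled directly by product and commutator estimates. The outcome is
\[
\tfrac{d}{dt}\|u\|_{H^{s+1}_\mu}\le\varepsilon\,C\big(\|u\|_{H^{s+1}_\mu}\big)\,\|u\|_{H^{s+1}_\mu},
\]
with $C$ independent of $\mu$; a standard continuation argument then furnishes a uniform lifespan of order $1/\varepsilon$.

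With these uniform bounds I would pass to the limit in the regularization: the approximate solutions are bounded in $L^\infty([0,T/\varepsilon];H^{s+1}_\mu)$ and, through the equation, have time derivatives bounded in $H^s_\mu$, hence are equicontinuous into $H^s_\mu$, so a compactness argument yields a limit solving \eqref{r-CH-eqns-1}; membership in $C^1([0,T/\varepsilon];H^s_\mu)$ is read off from $u_t=\mathcal L_\mu^{-1}G_{\varepsilon,\mu}(u)$. Uniqueness and continuous dependence follow from an energy estimate on the difference $w=u_1-u_2$ of two solutions, which solves a linear equation with coefficients built from $u_1,u_2$; the same symmetrization by $\mathcal L_\mu$ gives $\tfrac{d}{dt}\|w\|\le\varepsilon C\|w\|$ in a suitable lower norm and hence $w\equiv0$. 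Strong (rather than merely weak) continuity of $t\mapsto u(t)$ into $H^{s+1}_\mu$ I would obtain by a Bona--Smith approximation of the initial data. I expect the main obstacle to be the a priori estimate: arranging the commutator and integration-by-parts bookkeeping so that the third-order quasilinear (and the $\mu$-dispersive) terms close in $H^{s+1}_\mu$ \emph{uniformly} in $\mu$, while simultaneously exhibiting the single clean power of $\varepsilon$ that delivers the long $O(1/\varepsilon)$ time of existence.
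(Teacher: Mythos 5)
Your overall strategy is exactly the one the paper intends: the paper omits the proof and defers to Constantin--Lannes \cite{CL09}, and the argument there (mirrored in the paper's own detailed treatment of the R-GN system in Section 4) is precisely your symmetrizer $1-\beta\mu\partial_x^2$, the $\mu$-weighted energy $\|u\|_{H^s}^2+\mu\|u_x\|_{H^s}^2$, Friedrichs regularization, Kato--Ponce commutators, and an energy inequality with an explicit prefactor $\varepsilon$ yielding the $O(1/\varepsilon)$ lifespan. So the route is the right one.

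There is, however, one step that fails as literally written: the treatment of the third-order term via the splitting $\Lambda^s(uu_{xxx})=u\,\Lambda^su_{xxx}+[\Lambda^s,u]u_{xxx}$ with the commutator ``controlled by Kato--Ponce.'' The Kato--Ponce estimate gives
\begin{equation*}
\|[\Lambda^s,u]u_{xxx}\|_{L^2}\lesssim \|u\|_{H^s}\|u_{xxx}\|_{L^\infty}+\|u_x\|_{L^\infty}\|u_{xxx}\|_{H^{s-1}},
\end{equation*}
and both terms on the right involve roughly $s+2$ derivatives of $u$. Even with the prefactor $\varepsilon\mu$, this produces $\varepsilon\sqrt{\mu}\cdot\big(\sqrt{\mu}\,\|u\|_{H^{s+2}}\big)$, and the quantity $\sqrt{\mu}\,\|u\|_{H^{s+2}}$ is \emph{not} controlled by $\|u\|_{H^{s+1}_\mu}$ uniformly in $\mu$ (the energy only carries $\sqrt{\mu}\,\|u_x\|_{H^s}$, one derivative short). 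So this commutator term cannot be absorbed and the estimate does not close. The fix is standard and you have in fact already set it up in your first paragraph: keep the right-hand side in divergence form $\alpha\beta\varepsilon\mu\,\partial_x(uu_{xx}+\tfrac12u_x^2)$, integrate by parts \emph{before} commutating, so that the pairing becomes $-\alpha\beta\varepsilon\mu\,(\Lambda^s(uu_{xx}+\tfrac12u_x^2),\Lambda^su_x)$ and the only commutator needed is $[\Lambda^s,u]u_{xx}$, which Kato--Ponce bounds by $\|u\|_{H^s}\|u_{xx}\|_{L^\infty}+\|u_x\|_{L^\infty}\|u_x\|_{H^s}$ --- all controlled by $\|u\|_{H^{s+1}_\mu}$ after distributing the two factors of $\sqrt{\mu}$. (Equivalently, use the identity $[\Lambda^s,u]\partial_x g=\partial_x([\Lambda^s,u]g)-[\Lambda^s,u_x]g$ with $g=u_{xx}$, as the paper does in its linearized analysis.) With that correction, and noting that the top-order symmetrized integrals you identify, $\varepsilon\mu\int u_x(\Lambda^su_x)^2$ and $\varepsilon\mu\int u_{xx}\,\Lambda^su\,\Lambda^su_x$, are indeed the correct residue and are admissible, the rest of your argument (regularization, uniform bounds, compactness, uniqueness by a difference estimate in a lower norm, Bona--Smith for strong continuity) goes through.
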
 We omit the proof of this result, since  it is similar to that in \cite{CL09}.

Attention is now turned to  the case of  local well-posedness of the R-GN equations in \eqref{R-GN-2}. Note that we may rewrite \eqref{R-GN-2} as
\begin{equation}\label{R-GN-1}
\begin{cases}
\eta_t + (h\, u)_x = 0,\\
\mathfrak{T}(u_t +  \varepsilon u u_x )+h \eta_x- 2 \Omega (h^2u_x+ \varepsilon h u\eta_x)+\varepsilon \mu \,h\,Q[h]u = 0,
\end{cases}
\end{equation}
where $h := 1+\varepsilon \eta$, $\mathfrak{T}=\mathfrak{T}[h]  := h\,  -\frac{\mu}{3}\partial_x(h^3 \partial_x \, )$, $Q[h]f := \frac{2}{3h}\partial_x(h^3 f_x^2)$.

If we denote
\begin{equation*}
\begin{split}
& U := \left(
\begin{array}{lll}
\eta\\
u
\end{array}\right), \quad Q_1[U]f := \frac{2}{3}\varepsilon\mu \partial_x(h^3u_x \,f)-2\Omega h^2 f,\\
& A(U) :=
\left(
\begin{array}{lll}
\varepsilon u & h\\
 \mathfrak{T}^{-1}((1-2\varepsilon\Omega u)h\cdot)& \varepsilon\, u+ \mathfrak{T}^{-1}Q_1[U]
\end{array}\right),
\end{split}
\end{equation*}
then we may rewrite \eqref{R-GN-1} as the following hyperbolic equations:
\begin{equation}\label{R-GN-3}
\begin{cases}
&\partial_t U+ A[U] \partial_x U=0,\\
&U|_{t=0}=U_0.
\end{cases}
\end{equation}


From the structure of \eqref{R-GN-3}, it is convenient to introduce the following  energy spaces $X^{s}$.
\begin{definition}
  Given $s \geq 0$, $T>0$. We define the Hilbert space
\begin{equation*}
X^{s}=X^{s}(\mathbb{R}):=\{(\eta, u)^T\in ( H^s( \mathbb{R}))^2 :\, \|\eta\|_{H^s}^2+ \|u\|_{H^s}^2+ \mu \|\partial_x u\|_{H^s}^2 <+\infty\}
\end{equation*}
equipped with the norm $\|(\eta, u)^T \|_{X^{s}(\mathbb{R})}:= (\|\eta\|_{H^s}^2+ \|u\|_{H^s}^2+ \mu \|\partial_x u\|_{H^s}^2)^{\frac{1}{2}}$ for any $(\eta, u)^T \in X^{s}(\mathbb{R})$ and the canonical inner product, while $X^{s}_T$ stands for  $C([0, \frac{T}{\varepsilon}], X^s)$.
\end{definition}

The main result of this section is the following.
\begin{theorem}\label{thm-local}
For $s >\frac{3}{2}$, let the initial data
$ U_0 := (\eta_0, u_0)^T \in X^{s}(\mathbb{R})$ satisfy the following condition:
\begin{equation*}
\mbox{there is a constant} \,\,b_0 > 0 \,\mbox{such that} \,\min \left\{\inf_{x\in \mathbb{R}} (1+\varepsilon\eta_0), \inf_{x\in \mathbb{R}} (1-2\Omega \varepsilon u_0) \right\} \geq b_0.
\end{equation*}
Then there exists a positive maximal existence time $T_{max}>0$, uniformly bounded from below with respect to $\varepsilon, \, \mu \in (0, 1)$, such that the Green-Naghdi equations \eqref{R-GN-3} admit a unique solution $U = (\eta , u)^T \in X^s_{T_{max}}$ preserving the condition \eqref{initial-condition-1} for any $ t \in [0, \frac{T_{max}}{\varepsilon})$. In particular if $T_{max} < +\infty$, there holds
\begin{equation}\label{blowup-cond-1}
\|U(t, \cdot)\|_{X^s} \rightarrow +\infty \quad \mbox{as} \quad t \rightarrow \frac{T_{max}}{\varepsilon},
\end{equation}
or
\begin{equation}\label{blowup-cond-2}
\min \left\{\inf_{x\in \mathbb{R}} (1+\varepsilon\eta), \inf_{x\in \mathbb{R}} (1-2\Omega \varepsilon u) \right\} \rightarrow 0 \quad \mbox{as} \quad t \rightarrow \frac{T_{max}}{\varepsilon}.
\end{equation}
Moreover, the energy
\begin{equation*}
E(\eta, u) := \|\eta\|_{L^2}^2+(\mathfrak{T}u, u)=\int_{\mathbb{R}}(\eta^2+h\, u^2+\frac{\mu}{3}h^3u_x^2)\,dx
\end{equation*}
is independent of the existence time $t>0$.
\end{theorem}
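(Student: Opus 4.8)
The plan is to treat \eqref{R-GN-3} as a quasilinear symmetric-hyperbolic system and run the classical energy method, but carried out on the non-inverted formulation \eqref{R-GN-1} so as never to estimate $\mathfrak{T}^{-1}$ directly. The starting point is that, under the non-degeneracy hypothesis, $h=1+\varepsilon\eta\ge b_0>0$ makes $\mathfrak{T}=\mathfrak{T}[h]=h-\frac{\mu}{3}\partial_x(h^3\partial_x\,\cdot\,)$ a self-adjoint, uniformly (in $\mu$) positive elliptic operator, so that $(\mathfrak{T}u,u)=\int_{\mathbb{R}}(h\,u^2+\frac{\mu}{3}h^3u_x^2)\,dx$ is equivalent to $\|u\|_{L^2}^2+\mu\|u_x\|_{L^2}^2$ once $b_0\le h\le C$. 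Hence the conserved quantity $E(\eta,u)=\|\eta\|_{L^2}^2+(\mathfrak{T}u,u)$ is exactly the square of the $X^0$-norm and is the natural symmetrizer: with the first equation as $\eta_t+(hu)_x=0$ and the second as $\mathfrak{T}(u_t+\varepsilon uu_x)+h\eta_x-2\Omega(h^2u_x+\varepsilon hu\eta_x)+\varepsilon\mu\,h\,Q[h]u=0$, the coupling terms cancel because $(hu,\eta_x)=(h\eta_x,u)$. I would therefore define, for $s>\frac32$, the higher-order energy
\begin{equation*}
E^s(U):=\sum_{0\le k\le s}\Big(\|\partial_x^k\eta\|_{L^2}^2+(\mathfrak{T}\,\partial_x^k u,\,\partial_x^k u)\Big),
\end{equation*}
(or the equivalent version built from $J^s=(1-\partial_x^2)^{s/2}$), which is equivalent to $\|U\|_{X^s}^2$ uniformly in $\mu\in(0,1)$.

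Differentiating $E^s$ in time, substituting \eqref{R-GN-1} and commuting $J^s$ past the variable coefficients reproduces at top order exactly the cancellation above, while every surviving term is either a commutator or carries an explicit factor $\varepsilon$. Indeed all genuinely nonlinear contributions enter through $h-1=\varepsilon\eta$, through $\varepsilon uu_x$, through the $2\Omega$-terms, and through $h_t=-\varepsilon(hu)_x$, which governs the time dependence of $\mathfrak{T}$ in $([\partial_t,\mathfrak{T}]\,J^s u,\,J^s u)=\int(h_t(J^su)^2+\mu h^2h_t(J^su)_x^2)\,dx$. Using the Kato--Ponce commutator estimate and Moser-type product and composition estimates in $H^s$ (an algebra since $s>\frac32$), together with the uniform mapping bounds of $\mathfrak{T}^{-1}$ with the $\sqrt\mu$-gain tracked through the $X^s$-norm, I would obtain
\begin{equation*}
\frac{d}{dt}E^s(U)\le \varepsilon\,C\big(b_0^{-1},E^s(U)\big),
\end{equation*}
with $C$ nondecreasing and independent of $\varepsilon,\mu\in(0,1)$. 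The $\varepsilon$-gain in front of the nonlinearity is precisely what yields existence on $[0,T/\varepsilon]$ with $T$, and hence $T_{max}$, bounded below independently of $(\varepsilon,\mu)$.

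For the construction I would regularize, either by a Friedrichs mollifier scheme $\partial_t U^\iota+J_\iota A[J_\iota U^\iota]\partial_x J_\iota U^\iota=0$ (a Banach-space ODE, solvable by Cauchy--Lipschitz) or by vanishing viscosity, and transfer the uniform bound above to the regularized solutions, obtaining an existence time independent of the regularization parameter. Passing to the limit (weak-$*$ at the $X^s$ level, strong in $X^{s'}$ with $s'<s$ by Aubin--Lions) produces $U\in L^\infty([0,T/\varepsilon];X^s)$; the non-degeneracy persists because $h$ and $1-2\varepsilon\Omega u$ satisfy transport-type equations whose velocity carries a factor $\varepsilon$. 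Uniqueness and continuous dependence follow from the $X^0$-stability estimate of Lemma \ref{lem-uniqueness-1}: subtracting two solutions and invoking the same symmetric structure gives a Gronwall bound on the difference, and a Bona--Smith regularization argument upgrades weak to strong time-continuity, giving $U\in X^s_{T_{max}}=C([0,T_{max}/\varepsilon];X^s)$. The blow-up alternative \eqref{blowup-cond-1}--\eqref{blowup-cond-2} is then the standard continuation principle: since the existence time depends only on $\|U_0\|_{X^s}$ and on $b_0$, the solution extends as long as both $\|U(t)\|_{X^s}$ stays finite and the non-degeneracy stays bounded away from zero.

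Finally, the conservation of $E(\eta,u)$ is a direct computation: $\frac{d}{dt}\|\eta\|_{L^2}^2=2(hu,\eta_x)$, while $\frac{d}{dt}(\mathfrak{T}u,u)=([\partial_t,\mathfrak{T}]u,u)+2(\mathfrak{T}u_t,u)$, and substituting the second equation of \eqref{R-GN-1} makes the coupling terms cancel as above, the remaining $\varepsilon$- and $\Omega$-contributions combining with $([\partial_t,\mathfrak{T}]u,u)$ to vanish after integration by parts; this is just the symmetrizer identity at the base level. I expect the main obstacle to be the energy estimate of the second paragraph, and specifically its uniformity in $\mu$ as $\mu\to0$: the factor $\mathfrak{T}^{-1}$ in $A[U]$ and the dispersive term $\varepsilon\mu\,h\,Q[h]u$ must be handled so that no negative power of $\mu$ is generated, which forces one to work throughout with the $\mu$-weighted norm $\|u\|_{H^s}^2+\mu\|\partial_x u\|_{H^s}^2$ and to exploit the $\sqrt\mu$-smoothing of $\mathfrak{T}^{-1}$ rather than bounding terms individually; this is the content of the \emph{refined energy estimates} alluded to in the introduction.
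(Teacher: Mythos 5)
Your overall architecture (symmetrization, $\Lambda^s$-energy estimates with Kato--Ponce commutators, an approximation scheme, Gronwall stability for uniqueness, continuation principle, and the direct computation for $E(\eta,u)$) matches the paper's. But there is a genuine gap at the heart of the argument: your symmetrizer is wrong. You take $S=\mathrm{diag}(1,\mathfrak{T})$ and claim the coupling cancels because $(hu,\eta_x)=(h\eta_x,u)$. That identity does hold at the base $L^2$ level (which is why the conservation of $E(\eta,u)$ goes through), but it does not symmetrize the quasilinear system: the Coriolis contribution $-2\Omega\varepsilon\, h u\,\eta_x$ in the second equation of \eqref{R-GN-1} means the $\eta_x$-coefficient there is $h(1-2\varepsilon\Omega u)$, not $h$, so with your $S$ the off-diagonal entries of $SA[U]$ are $h$ and $(1-2\varepsilon\Omega u)h$ and do not match. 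At order $s>0$ the surviving term is $2\Omega\varepsilon\,(hu\,\Lambda^s\eta_x,\Lambda^su)$, which has no partner; integrating by parts moves the derivative onto $\Lambda^su$ and produces a contribution of size $\varepsilon\,\|\partial_x u\|_{H^s}\|\eta\|_{H^s}\lesssim (\varepsilon/\sqrt{\mu})\,E^s(U)^2\lesssim M\,E^s(U)^2$ in the $\mu$-weighted norm. That is $O(1)$, not $O(\varepsilon)$, so your claimed estimate $\frac{d}{dt}E^s(U)\le\varepsilon\,C(b_0^{-1},E^s(U))$ fails, and with it the uniform $O(1/\varepsilon)$ existence time that the theorem asserts (you would only recover a time $O(1)$ on the rescaled clock).

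The fix is exactly the paper's choice \eqref{symmetrizer-1}: one must weight the second diagonal entry by $\kappa_\Omega=(1-2\varepsilon\Omega u)^{-1}$, i.e.\ take $\mathfrak{T}_1=\kappa_\Omega\mathfrak{T}$, so that $(SA)_{21}=\kappa_\Omega(1-2\varepsilon\Omega u)h=h=(SA)_{12}$ as in \eqref{matrix-ope-1}. This is also the only place where the second half of the hypothesis \eqref{initial-condition-1}, namely $\inf_x(1-2\Omega\varepsilon u_0)\ge b_0$, enters (cf.\ Remark \ref{rmk-initial-con-1} and Lemma \ref{lem-equiv-norm}): your energy functional never uses it, which is a warning sign, and your continuation argument would consequently not produce the second blow-up alternative \eqref{blowup-cond-2}. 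Once the symmetrizer is corrected, the rest of your plan (iteration or mollification, commutator and $\mathfrak{T}^{-1}$ bounds tracking the $\sqrt{\mu}$-gain, $X^{s-1}$-stability, bootstrap on the non-degeneracy) is essentially the paper's proof, though note the extra commutators $[\partial_t,\kappa_\Omega\mathfrak{T}]$ and the terms involving $[\kappa_\Omega]_x$ then have to be estimated as well, which is where most of the paper's technical work lies.
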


\subsection{Symmetrization}
It is easy to see that a symmetrizer for \eqref{R-GN-3} can be found to be
\begin{equation}\label{symmetrizer-1}
\begin{split}
& S :=
\left(
\begin{array}{lll}
1 & 0\\
0&  \mathfrak{T}_1.
\end{array}\right),\quad \mbox{where} \quad \mathfrak{T}_1  :=  {\kappa}_{\Omega} \mathfrak{T}, \quad {\kappa}_{\Omega}={\kappa}_{\Omega}(u) := (1-2\varepsilon\Omega u)^{-1}.
\end{split}
\end{equation}
Thus a natural energy for the equation \eqref{R-GN-3} is given by
\begin{equation}\label{energy-equiv-1}
E^s(U)^2 :=(\Lambda^s U, S\, \Lambda^s U).
\end{equation}
The link between $E^s(U)$ and the $X^s$-norm is investigated in the following lemma, which was obtained in \cite{Is2011} (up to a slight modification).

\begin{lemma}[\cite{Is2011}]\label{lem-equiv-norm}
Let $s \geq 0$, $u \in L^{\infty}(\mathbb R)$, and $\eta \in W^{1,\infty}(\mathbb R)$ be such that
\begin{equation}\label{initial-condition-1}
\mbox{there is a constant} \,\,b_0 > 0 \,\mbox{such that} \,\min \left\{\inf_{x\in \mathbb{R}} (1+\varepsilon\eta), \inf_{x\in \mathbb{R}} (1-2\Omega \varepsilon u) \right\} \geq b_0.
\end{equation}
Then $E^s(U)$ is uniformly equivalent to the $X^s$-norm
with respect to $(\mu,\, \varepsilon) \in (0, \frac{1}{2}) \times (0, \frac{1}{2})$:
\begin{equation*}
E^s(U) \leq  C(\|u\|_{L^{\infty}}, \|h\|_{L^{\infty}}, \|h_x\|_{L^{\infty}})\|U\|_{X^s} ,
\end{equation*}
and
\begin{equation*}
\|U\|_{X^s} \leq C \left(\frac{1}{b_0} \right) E^s(U).
\end{equation*}
\end{lemma}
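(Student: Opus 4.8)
The plan is to exploit the block–diagonal structure of $S$ and reduce everything to the velocity component. Since the $(\eta,\eta)$ entry of $S$ is the identity, the surface part of the energy is $(\Lambda^s\eta,\Lambda^s\eta)=\|\eta\|_{H^s}^2$, which already matches the $\eta$-contribution to the $X^s$-norm with constant one and needs no further work. The entire statement therefore reduces to showing that the velocity form $(\Lambda^s u,\mathfrak T_1\Lambda^s u)$ is, uniformly in $(\mu,\varepsilon)\in(0,\tfrac12)\times(0,\tfrac12)$, two-sided comparable to $\|u\|_{H^s}^2+\mu\|\partial_x u\|_{H^s}^2$.

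To analyze this form I would set $w:=\Lambda^s u$, so that $\|w\|_{L^2}=\|u\|_{H^s}$ and $\|w_x\|_{L^2}=\|\partial_x u\|_{H^s}$, and unfold $\mathfrak T_1=\kappa_\Omega\mathfrak T[h]$ with $\mathfrak T[h]w=hw-\frac{\mu}{3}\partial_x(h^3 w_x)$. Using that $\kappa_\Omega$ is a multiplication operator and integrating by parts once gives
\begin{equation*}
(\Lambda^s u,\mathfrak T_1\Lambda^s u)=\int_{\mathbb R}\kappa_\Omega\,h\,w^2\,dx+\frac{\mu}{3}\int_{\mathbb R}\kappa_\Omega\,h^3\,w_x^2\,dx+\frac{\mu}{3}\int_{\mathbb R}(\partial_x\kappa_\Omega)\,h^3\,w\,w_x\,dx.
\end{equation*}
Condition \eqref{initial-condition-1} supplies $h=1+\varepsilon\eta\ge b_0$ together with $h\le\|h\|_{L^\infty}$, and forces $1-2\Omega\varepsilon u\ge b_0$, so that $\kappa_\Omega=(1-2\varepsilon\Omega u)^{-1}$ is pinched between positive constants: the lower bound on $1-2\Omega\varepsilon u$ yields $\kappa_\Omega\le 1/b_0$, while $\|u\|_{L^\infty}$ bounds $\kappa_\Omega$ from below. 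Hence the first two integrals are squeezed between fixed positive multiples of $\|w\|_{L^2}^2$ and $\mu\|w_x\|_{L^2}^2$, the lower multiples depending only on $b_0$ and the upper ones only on $\|u\|_{L^\infty}$ and $\|h\|_{L^\infty}$. This is exactly the clean, self-adjoint part and reproduces the Coriolis-free estimate of \cite{Is2011}.

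The one genuinely new term — and the crux of the argument — is the cross term $\frac{\mu}{3}\int(\partial_x\kappa_\Omega)h^3 w w_x\,dx$, which records the loss of self-adjointness caused by inserting the rotational weight $\kappa_\Omega$. The decisive observation is that $\partial_x\kappa_\Omega=2\varepsilon\Omega\,\kappa_\Omega^2\,u_x$ carries an \emph{explicit} factor of $\varepsilon$. Estimating by Cauchy--Schwarz, distributing the $\mu$ through Young's inequality, and using $\mu\le 1$ to convert $\mu\|w\|_{L^2}^2$ into $\|w\|_{L^2}^2$, one obtains for every $\theta>0$
\begin{equation*}
\frac{\mu}{3}\left|\int_{\mathbb R}(\partial_x\kappa_\Omega)\,h^3\,w\,w_x\,dx\right|\le C\varepsilon\left(\theta\,\mu\|w_x\|_{L^2}^2+\frac{1}{\theta}\|w\|_{L^2}^2\right),
\end{equation*}
with $C$ governed by $\|u\|_{L^\infty}$, $\|h\|_{L^\infty}$ and the Lipschitz control of the coefficients entering $\partial_x\kappa_\Omega$ and $h^3$ (this is where the dependence on $\|h_x\|_{L^\infty}$ surfaces). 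Thanks to the prefactor $\varepsilon$ one may keep $\theta$ of order one and still force both absorbed quantities strictly below the corresponding coercive terms, so that the coefficients of $\|w\|_{L^2}^2$ and of $\mu\|w_x\|_{L^2}^2$ stay positive \emph{uniformly} in $(\mu,\varepsilon)$; this is precisely the ``slight modification'' of \cite{Is2011}. Assembling the resulting lower bound gives $\|U\|_{X^s}\le C(1/b_0)\,E^s(U)$, while bounding the three integrals from above directly gives $E^s(U)\le C(\|u\|_{L^\infty},\|h\|_{L^\infty},\|h_x\|_{L^\infty})\|U\|_{X^s}$. The delicate point throughout — and the step I expect to be the main obstacle — is not any single estimate but the bookkeeping that makes every constant independent of $(\mu,\varepsilon)$; the explicit $\varepsilon$-factor in $\partial_x\kappa_\Omega$ together with the uniform floor $b_0$ from \eqref{initial-condition-1} is exactly what renders this possible.
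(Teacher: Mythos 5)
The paper does not actually prove this lemma: it is imported from \cite{Is2011} ``up to a slight modification'', the modification being precisely the insertion of the weight $\kappa_{\Omega}$ into the symmetrizer. Your reconstruction is therefore the natural one, and your opening reduction is correct: the $\eta$-block contributes $\|\eta\|_{H^s}^2$ exactly, and one integration by parts turns $(\Lambda^s u,\mathfrak{T}_1\Lambda^s u)$ into the two weighted coercive integrals plus the cross term $\frac{\mu}{3}\int(\partial_x\kappa_{\Omega})h^3ww_x\,dx$; this is the same non-self-adjointness defect the paper records later in \eqref{linearized-est-2}, and your treatment of the two diagonal integrals via $h\ge b_0$, $1-2\Omega\varepsilon u\ge b_0$ is exactly Israwi's argument.

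The gap is in the cross term, which you correctly single out as the crux but then dispatch too quickly. Since $\partial_x\kappa_{\Omega}=2\varepsilon\Omega\,\kappa_{\Omega}^2u_x$, any bound on this term requires $\|u_x\|_{L^{\infty}}$, which is neither finite under the stated hypothesis $u\in L^{\infty}$ nor among the quantities the constants are permitted to depend on. More seriously, your claim that the absorption works \emph{uniformly} ``thanks to the prefactor $\varepsilon$'' does not hold as stated: $\varepsilon$ only ranges over $(0,\tfrac12)$, so it is not small, and the constant multiplying it contains $\|u_x\|_{L^{\infty}}$, which is not controlled by $b_0$. Quantitatively, with $a^2=\int\kappa_{\Omega}hw^2$ and $b^2=\mu\int\kappa_{\Omega}h^3w_x^2$ the cross term is only bounded by $C\left(\tfrac{1}{b_0},\|h\|_{L^{\infty}}\right)\sqrt{\mu}\,\varepsilon\,\|u_x\|_{L^{\infty}}\,ab$, so coercivity --- and hence the asserted bound $\|U\|_{X^s}\le C\left(\tfrac{1}{b_0}\right)E^s(U)$ --- requires $\sqrt{\mu}\,\varepsilon\,\|u_x\|_{L^{\infty}}$ to be small relative to $b_0$-dependent quantities; for a fixed admissible $u$ with large Lipschitz constant and $\varepsilon$ near $\tfrac12$ the quadratic form can degenerate. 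To close the argument you must either (i) strengthen the hypotheses to $u\in W^{1,\infty}$ and let the equivalence constants depend additionally on $\varepsilon\sqrt{\mu}\|u_x\|_{L^{\infty}}$ (harmless in the application, where $s>\tfrac32$ gives $u_x\in L^{\infty}$ by Sobolev embedding), or (ii) record the smallness condition explicitly. A milder version of the same issue affects even your ``clean'' part: the pointwise lower bound on $\kappa_{\Omega}$ needs $\varepsilon\|u\|_{L^{\infty}}$, not only $b_0$, so the constant in the second inequality cannot depend on $\tfrac{1}{b_0}$ alone. These defects are partly inherited from the imprecise statement of the lemma in the paper, but a complete proof must address them.
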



\begin{remark}\label{rmk-initial-con-1}
According to the definition of the matrix operator $S$ in the natural energy $E^s(U)$, it seems reasonable to restrict the additional initial condition \eqref{initial-condition-1} in Theorem \ref{thm-local} if we want to avoid   the degenerate case.
\end{remark}
In order to prove Theorem \ref{thm-local}, we first recall some fundamental properties of the pseudo-differential operators.
\begin{lemma}[Commutator estimates \cite{Ka-Pon-88}]\label{lemma-comm}
Let $\Lambda^s:=(1-\partial_x^2)^{\frac{s}{2}}$ with $s>0$. Then the  following two estimates are true:

(i) $\|[\Lambda^s, f]g\|_{L^2(\mathbb R)} \leq C(\|f\|_{H^s}\|g\|_{L^\infty(\mathbb R)}+\|f_x\|_{L^\infty(\mathbb R)}\|g\|_{H^{s-1}(\mathbb R)})$;

(ii) $\|[\Lambda^s, f]g\|_{L^2(\mathbb R)} \leq C\|f_x\|_{H^{q_0}(\mathbb R)}\|g\|_{H^{s-1}(\mathbb R)}$, \quad $\forall\, 0 \leq s \leq q_0+1, \, q_0>\frac{1}{2}$,

\noindent where all the constants $C$ are independent of $f$ and $g$.
\end{lemma}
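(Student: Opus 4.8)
The plan is to prove both estimates by Fourier analysis, reducing the commutator to a bilinear Fourier multiplier and extracting the cancellation built into $[\Lambda^s,f]$ through a mean-value bound on the symbol of $\Lambda^s$, organized by a Littlewood--Paley/paraproduct splitting. By Plancherel it suffices to control the $L^2_\xi$-norm of
\[
\widehat{[\Lambda^s,f]g}(\xi)=c\int_{\mathbb R}\big(\langle\xi\rangle^s-\langle\eta\rangle^s\big)\,\widehat f(\xi-\eta)\,\widehat g(\eta)\,d\eta,\qquad \langle\xi\rangle:=(1+\xi^2)^{1/2},
\]
where $\xi-\eta$ is the frequency carried by $f$. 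Since $\phi(\xi)=\langle\xi\rangle^s$ obeys $|\phi'(\xi)|=s|\xi|\langle\xi\rangle^{s-2}\le s\langle\xi\rangle^{s-1}$, the mean value theorem gives the key symbol bound
\[
\big|\langle\xi\rangle^s-\langle\eta\rangle^s\big|\le C_s\,|\xi-\eta|\sup_{0\le t\le1}\langle\eta+t(\xi-\eta)\rangle^{\,s-1},
\]
which already exhibits the gain of one derivative on $f$; this is precisely why only $f_x$, and not $f$ itself, should control the commutator, and why $[\Lambda^s,f]$ annihilates constants.

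First I would split the frequency integral according to whether $g$ or $f$ carries the higher frequency. In the low--high region $|\xi-\eta|\le\frac12\langle\eta\rangle$ one has $\langle\eta+t(\xi-\eta)\rangle\approx\langle\eta\rangle$ uniformly in $t$, so the symbol is $\lesssim|\xi-\eta|\langle\eta\rangle^{s-1}$; up to harmless smooth kernels this piece is the paraproduct $T_{f_x}(\Lambda^{s-1}g)$, so the elementary bound $\|T_ab\|_{L^2}\lesssim\|a\|_{L^\infty}\|b\|_{L^2}$ yields a contribution $\lesssim\|f_x\|_{L^\infty}\|g\|_{H^{s-1}}$, the second term of (i). In the complementary region $|\xi-\eta|>\frac12\langle\eta\rangle$ (which absorbs the diagonal high--high interaction) one has $\langle\eta\rangle\lesssim\langle\xi-\eta\rangle$, so the symbol is $\lesssim\langle\xi-\eta\rangle^s$; this piece is dominated by $\Lambda^s f$ against $g$, giving $\lesssim\|f\|_{H^s}\|g\|_{L^\infty}$, the first term of (i). Summing the two regions proves (i), all constants being multiplier constants and hence independent of $f,g$.

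For (ii) I would keep the same decomposition but redistribute the weights using the hypotheses $q_0>\frac12$ and $0\le s\le q_0+1$. In the low--high region the bound $\lesssim\|f_x\|_{L^\infty}\|g\|_{H^{s-1}}$ already suffices, since the $1$D Sobolev embedding $H^{q_0}\hookrightarrow L^\infty$ (valid because $q_0>\frac12$) gives $\|f_x\|_{L^\infty}\lesssim\|f_x\|_{H^{q_0}}$. In the high region I would factor the symbol as $\langle\xi-\eta\rangle^s\le\langle\xi-\eta\rangle^{\,q_0}\langle\xi-\eta\rangle$, which is legitimate exactly because $s\le q_0+1$: the factor $\langle\xi-\eta\rangle$ turns $\widehat f$ into (the high-frequency part of) $\widehat{f_x}$, the factor $\langle\xi-\eta\rangle^{q_0}$ then produces $\|f_x\|_{H^{q_0}}$, and the remaining $g$-factor is measured in $H^{s-1}$ and summed against $f_x\in H^{q_0}\hookrightarrow L^\infty$. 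Collecting both regions gives $\|[\Lambda^s,f]g\|_{L^2}\lesssim\|f_x\|_{H^{q_0}}\|g\|_{H^{s-1}}$.

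The step I expect to be the main obstacle is the diagonal high--high interaction, where $f$ and $g$ have comparable large frequencies and the crude symbol bound no longer cleanly gains a derivative; there one must combine the commutator cancellation with the almost-orthogonality of the Littlewood--Paley blocks and, for (ii), lean on the summability afforded by $q_0>\frac12$ to close the $\ell^2$/$\ell^1$ sums over dyadic scales, using Bernstein's inequality to pass between $L^2$ and $L^\infty$ on single blocks. This bookkeeping is the only genuinely delicate point; alternatively, once the derivative has been extracted the reduced symbol lies in the Coifman--Meyer class, and their bilinear multiplier theorem may be invoked in place of the hand estimate.
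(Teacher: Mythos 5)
The paper offers no proof of this lemma to compare against: it is quoted with the citation \cite{Ka-Pon-88} (part (i) is the classical Kato--Ponce commutator estimate; part (ii) is the refinement used in Israwi's Green--Naghdi framework \cite{Is2011}), so your blind attempt is measured only against the standard literature proof, which is in fact the route you take. Your outline is essentially correct: the Plancherel reduction to the bilinear symbol $\langle\xi\rangle^s-\langle\eta\rangle^s$, the mean-value bound gaining one derivative on $f$ (the source of the commutator cancellation, needed exactly and only where $f$ carries the low frequency), the low--high versus high-frequency splitting, and your identification of the diagonal high--high block as the one place requiring Littlewood--Paley almost-orthogonality are all right; for (i) the dyadic sum there closes precisely because $s>0$, and a pointwise bound $|m|\lesssim\langle\xi-\eta\rangle^s$ alone would indeed only give $\|\widehat g\|_{L^1}$ rather than $\|g\|_{L^\infty}$, which you correctly flag rather than gloss over.

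The one step that is lossy as literally written is the high region of part (ii). The factorization $\langle\xi-\eta\rangle^s\le\langle\xi-\eta\rangle^{q_0}\langle\xi-\eta\rangle$ followed by Young's inequality places $\langle\cdot\rangle^{q_0}\widehat{f_x}$ in $L^2$ and then needs an $L^1$ bound on the $g$-side that $\|g\|_{H^{s-1}}$ does not supply, while pairing the other way needs $\langle\cdot\rangle^{q_0}\widehat{f_x}\in L^1$, which fails at the endpoint $s=q_0+1$; moreover the Sobolev embedding $\|f_x\|_{L^\infty}\lesssim\|f_x\|_{H^{q_0}}$, which you invoke here, is of no use in this region (it only handles the low--high piece). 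The repair uses the region constraint rather than a crude factorization: since $|\xi-\eta|>\tfrac12\langle\eta\rangle\ge\tfrac12$ there, one has $|\widehat f(\xi-\eta)|\le 2|\widehat{f_x}(\xi-\eta)|$ with no low-frequency loss (as you anticipate), and with $F:=\langle\cdot\rangle^{q_0}|\widehat{f_x}|\in L^2$, $G:=\langle\cdot\rangle^{s-1}|\widehat g|\in L^2$, the constraint $\langle\eta\rangle\lesssim\langle\xi-\eta\rangle$ together with $s-1-q_0\le0$ gives
\begin{equation*}
\langle\xi-\eta\rangle^{s-1-q_0}\,\langle\eta\rangle^{1-s}\;\lesssim\;\langle\eta\rangle^{s-1-q_0}\,\langle\eta\rangle^{1-s}\;=\;\langle\eta\rangle^{-q_0},
\end{equation*}
so the integrand is dominated by $F(\xi-\eta)\,\langle\eta\rangle^{-q_0}G(\eta)$, and Young plus Cauchy--Schwarz ($\langle\cdot\rangle^{-q_0}\in L^2$ precisely because $q_0>\tfrac12$) closes the estimate uniformly for $0\le s\le q_0+1$, endpoint included. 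This is exactly the ``bookkeeping'' you flagged; with that substitution for your factorization, the proposal is a complete and correct proof strategy. (Note also that (ii) is not a corollary of (i), since $\|g\|_{L^\infty}$ is not controlled by $\|g\|_{H^{s-1}}$ for $s\le\tfrac32$; your decision to rework both regions rather than cite (i) is therefore necessary, not redundant.)
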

The following two lemmas provide invertibility of $\mathfrak T$ and the estimates for $\mathfrak{T}^{-1}$. The proof follows the argument in \cite{Is2011}. 
\begin{lemma}[\cite{Is2011}]\label{lem-operator-1}
Let $u\in L^{\infty}(\mathbb{R})$ and $\eta \in W^{1, \infty}(\mathbb{R})$ be such that \eqref{initial-condition-1} holds. Then the operator
\begin{equation}\label{operator-1}
\mathfrak{T}: \, H^2(\mathbb{R})\rightarrow L^2(\mathbb{R})
\end{equation}
is well defined, one-to-one and onto.
\end{lemma}
\begin{lemma}[\cite{Is2011}]\label{lem-operator-2}
Let $s_0 > 1/2 $ and $\eta \in H^{1+s_0}(\mathbb{R})$ be such that \eqref{initial-condition-1} holds. Then

(i) $\forall \, 0 \leq s \leq 1+s_0$, $\|\mathfrak{T}^{-1} f \|_{H^s} + \sqrt{\mu}\|\partial_x \mathfrak{T}^{-1} f \|_{H^s} \leq C(\frac{1}{b_0}, \|h-1\|_{H^{1+s_0}} )\|f\|_{H^s}$,

(ii) $\forall \, 0 \leq s \leq 1+s_0$, $\sqrt{\mu}\|\mathfrak{T}^{-1}\partial_x  g \|_{H^s}\leq C(\frac{1}{b_0}, \|h-1\|_{H^{1+s_0}} )\|g\|_{H^s}$ , \quad {\rm and}

(iii) If $s \geq 1+s_0$ and $\eta \in H^{s}(\mathbb{R})$, then:
$\|\mathfrak{T}^{-1}\|_{H^s\rightarrow H^s}+\sqrt{\mu}\|\mathfrak{T}^{-1}\partial_x  \|_{H^s\rightarrow H^s} \leq C_s$,
where $C_s$ is a constant depending on $\frac{1}{b_0}, \,\|h-1\|_{H^{s}}$ and independent of $(\varepsilon, \mu)\in (0, 1/2)$.
\end{lemma}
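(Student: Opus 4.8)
The plan is to obtain all three estimates from the coercivity of $\mathfrak{T}$ combined with an induction on the regularity index, treating $\sqrt{\mu}\,\partial_x$ as the quantity to be propagated. Invertibility being already furnished by Lemma \ref{lem-operator-1}, only the quantitative bounds remain. The cornerstone is that, for $v \in H^2(\mathbb{R})$,
\[
(\mathfrak{T}v, v) = \int_{\mathbb{R}} h\,v^2\,dx + \frac{\mu}{3}\int_{\mathbb{R}} h^3 v_x^2\,dx \geq b_0\|v\|_{L^2}^2 + \frac{b_0^3}{3}\,\mu\,\|v_x\|_{L^2}^2,
\]
where we used $h = 1+\varepsilon\eta \geq b_0$ from \eqref{initial-condition-1}. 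This single inequality controls $\|v\|_{L^2}$ and the weighted quantity $\sqrt{\mu}\,\|v_x\|_{L^2}$ at once — exactly the combination measured by the $X^s$-norm — and it already yields the case $s=0$ of (i): writing $v = \mathfrak{T}^{-1}f$ and pairing $\mathfrak{T}v = f$ with $v$, Young's inequality gives $\|v\|_{L^2} + \sqrt{\mu}\,\|v_x\|_{L^2} \leq C(1/b_0)\|f\|_{L^2}$.

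For $s > 0$ I would apply $\Lambda^s$ to $\mathfrak{T}v = f$. Since $\partial_x$ commutes with $\Lambda^s$, one has the clean identity
\[
[\Lambda^s, \mathfrak{T}]v = [\Lambda^s, h]v - \frac{\mu}{3}\partial_x([\Lambda^s, h^3]v_x),
\]
so that testing $\mathfrak{T}(\Lambda^s v) = \Lambda^s f - [\Lambda^s, \mathfrak{T}]v$ against $\Lambda^s v$ and integrating by parts in the last commutator produces
\[
b_0\|v\|_{H^s}^2 + \frac{b_0^3}{3}\,\mu\,\|\partial_x v\|_{H^s}^2 \leq (\Lambda^s f, \Lambda^s v) - ([\Lambda^s, h]v, \Lambda^s v) + \frac{\mu}{3}([\Lambda^s, h^3]v_x, \partial_x \Lambda^s v).
\]
The source term is $\leq \|f\|_{H^s}\|v\|_{H^s}$ and is absorbed by Young. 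For the two commutators, in the range $0 \leq s \leq 1+s_0$ I invoke Lemma \ref{lemma-comm}(ii) with $q_0 = s_0$, giving $\|[\Lambda^s, h]v\|_{L^2} \leq C\|h_x\|_{H^{s_0}}\|v\|_{H^{s-1}}$ and $\|[\Lambda^s, h^3]v_x\|_{L^2} \leq C\|(h^3)_x\|_{H^{s_0}}\|\partial_x v\|_{H^{s-1}}$, the coefficient norms being bounded by $C(\|h-1\|_{H^{1+s_0}})$ through tame product estimates. Crucially, the right-hand sides see only the order-$(s-1)$ norms of $v$, so an induction in unit steps on $s \in [0, 1+s_0]$ closes: at each step the commutator contributions are controlled by the inductive hypothesis $\|v\|_{H^\sigma} + \sqrt{\mu}\,\|\partial_x v\|_{H^\sigma} \leq C\|f\|_{H^\sigma}$ for $\sigma \leq s-1$, while the companion factors $\|v\|_{H^s}$ and $\sqrt{\mu}\,\|\partial_x v\|_{H^s}$ are absorbed into the coercive left-hand side.

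The delicate point, and the reason the weights must be tracked, is the term $\frac{\mu}{3}([\Lambda^s, h^3]v_x, \partial_x \Lambda^s v)$: it carries two extra derivatives together with a factor $\mu$, and the estimate must remain uniform as $\mu \to 0$. This is precisely where the bookkeeping pays off — splitting $\mu = \sqrt{\mu}\cdot\sqrt{\mu}$ factors the term as $(\sqrt{\mu}\,\|\partial_x v\|_{H^{s-1}})(\sqrt{\mu}\,\|\partial_x v\|_{H^s})$, the first bounded by induction and the second absorbed by the $\frac{b_0^3}{3}\mu\|\partial_x v\|_{H^s}^2$ already on the left. No power of $\mu$ is gained or lost, which is exactly what yields a $\mu$-independent constant. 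I expect this matching of the $\mu$-weights to be the main obstacle.

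For (ii) I would set $w = \mathfrak{T}^{-1}\partial_x g$, test $\mathfrak{T}w = \partial_x g$ against $w$, and use $(\partial_x g, w) = -(g, \partial_x w) \leq \frac{C}{\mu}\|g\|_{L^2}^2 + \frac{b_0^3}{6}\mu\|\partial_x w\|_{L^2}^2$, the last term absorbed on the left; this gives the base estimate $\sqrt{\mu}\,\|w\|_{L^2} \leq C\|g\|_{L^2}$, and the higher levels follow from the same commutator induction. Finally, (iii) is the identical scheme for $s \geq 1+s_0$, the only change being that Lemma \ref{lemma-comm}(ii) is unavailable in this range, so one uses the product-form estimate Lemma \ref{lemma-comm}(i) (with $\|v\|_{L^\infty}$ and $\|v_x\|_{L^\infty}$ controlled by $H^{s-1}$ via Sobolev embedding, since $s-1 > \tfrac12$); this is what forces $C_s$ to depend on the full norm $\|h-1\|_{H^s}$ rather than only on $\|h-1\|_{H^{1+s_0}}$.
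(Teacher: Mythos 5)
Your proposal is correct and follows essentially the same route as the paper, which defers to Israwi \cite{Is2011}: coercivity $(\mathfrak{T}v,v)\geq b_0\|v\|_{L^2}^2+\tfrac{b_0^3}{3}\mu\|v_x\|_{L^2}^2$, conjugation by $\Lambda^s$ with the Kato--Ponce commutator estimates of Lemma \ref{lemma-comm} (part (ii) for $0\leq s\leq 1+s_0$, part (i) for $s\geq 1+s_0$, which is exactly what makes $C_s$ depend on $\|h-1\|_{H^s}$), and an induction on $s$ that tracks the $\sqrt{\mu}$-weights so the constants stay $\mu$-independent. The only cosmetic slip is a sign on the term $\tfrac{\mu}{3}([\Lambda^s,h^3]v_x,\partial_x\Lambda^s v)$ after integration by parts, which is harmless since all commutator contributions are estimated in absolute value.
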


We further have the following results.
\begin{lemma}\label{lem-operator-diff-1}
Let $s> 3/2 $, $f\in X^{s}$,  and $U_1, \, U_2 \in X^{s}(\mathbb{R})$ be such that \eqref{initial-condition-1} holds. Then we have
\begin{equation}\label{diffe-opera-1}
\begin{split}
& \left \|  \left((\mathfrak{T}[h_1] )^{-1}-(\mathfrak{T}[h_2] \right)^{-1}) f \right\|_{H^{s-1}}  \\
& \quad \leq C \left(\frac{1}{b_0}, \|h_1-1\|_{H^{s}}, \|h_2-1\|_{H^{s}} \right)\|f\|_{H^{s-1}}\|h_1-h_2\|_{H^{s-1}}
\end{split}
\end{equation}
and
\begin{equation}\label{diffe-opera-0}
\begin{split}
&\|(A[U_1]-A[U_2]) f \|_{H^{s-1}}  \\
& \quad \leq \varepsilon\, C \left(\frac{1}{b_0}, \|h_1-1\|_{H^{s}}, \|h_2-1\|_{H^{s}}, \varepsilon\|(u_1, \, u_2)\|_{H^{s}} \right)\|f\|_{H^{s-1}}\|U_1-U_2\|_{H^{s-1}}.
\end{split}
\end{equation}
\end{lemma}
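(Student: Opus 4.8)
The plan is to reduce both estimates to the mapping properties of $\mathfrak{T}^{-1}$ recorded in Lemma \ref{lem-operator-2}, exploiting the fact that every factor of $\mu$ generated by the operators is matched by a compensating $\mu^{-1/2}$ (or $\mu^{-1}$) coming from the smoothing of $\mathfrak{T}^{-1}$, so that no negative power of $\mu$ survives. Throughout one uses $s-1>1/2$ (since $s>3/2$), so that $H^{s-1}$ is an algebra and Lemma \ref{lem-operator-2} applies at level $s-1$ with $1+s_0=s$.

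For \eqref{diffe-opera-1} I would start from the resolvent-type identity
\[
\mathfrak{T}[h_1]^{-1}-\mathfrak{T}[h_2]^{-1}=\mathfrak{T}[h_1]^{-1}\big(\mathfrak{T}[h_2]-\mathfrak{T}[h_1]\big)\mathfrak{T}[h_2]^{-1},
\]
together with $\mathfrak{T}[h_2]-\mathfrak{T}[h_1]=(h_2-h_1)-\tfrac{\mu}{3}\partial_x\!\big((h_2^3-h_1^3)\partial_x\,\cdot\,\big)$. Writing $g:=\mathfrak{T}[h_2]^{-1}f$, the zeroth-order piece $\mathfrak{T}[h_1]^{-1}\big((h_2-h_1)g\big)$ is controlled by Lemma \ref{lem-operator-2}(i) and the algebra property of $H^{s-1}$ by $C\|h_1-h_2\|_{H^{s-1}}\|f\|_{H^{s-1}}$. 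For the second-order piece $\tfrac{\mu}{3}\mathfrak{T}[h_1]^{-1}\partial_x\big((h_2^3-h_1^3)g_x\big)$ I would spend one power $\mu^{1/2}$ on Lemma \ref{lem-operator-2}(ii) (which gives $\sqrt\mu\|\mathfrak{T}^{-1}\partial_x G\|_{H^{s-1}}\le C\|G\|_{H^{s-1}}$) and the remaining $\mu^{1/2}$ on the bound $\|g_x\|_{H^{s-1}}=\|\partial_x\mathfrak{T}[h_2]^{-1}f\|_{H^{s-1}}\le C\mu^{-1/2}\|f\|_{H^{s-1}}$ from Lemma \ref{lem-operator-2}(i); the two powers cancel, and factoring $h_2^3-h_1^3=(h_2-h_1)(h_1^2+h_1h_2+h_2^2)$ yields the claim, the polynomial factor being absorbed into $C(1/b_0,\|h_1-1\|_{H^s},\|h_2-1\|_{H^s})$.

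For \eqref{diffe-opera-0} I would expand $A[U_1]-A[U_2]$ entrywise. The entries $\varepsilon u$ and $h=1+\varepsilon\eta$ produce $\varepsilon(u_1-u_2)$ and $\varepsilon(\eta_1-\eta_2)$, carrying the prefactor $\varepsilon$ explicitly. For the lower entries I would split each difference of the form $\mathfrak{T}[h_1]^{-1}G_1-\mathfrak{T}[h_2]^{-1}G_2$ as
\[
\mathfrak{T}[h_1]^{-1}(G_1-G_2)+\big(\mathfrak{T}[h_1]^{-1}-\mathfrak{T}[h_2]^{-1}\big)G_2,
\]
estimating the last summand by \eqref{diffe-opera-1} (here $\|h_1-h_2\|_{H^{s-1}}=\varepsilon\|\eta_1-\eta_2\|_{H^{s-1}}$ supplies the $\varepsilon$) and noting that in every coefficient difference $G_1-G_2$ a factor $\varepsilon$ is explicit, since $(1-2\varepsilon\Omega u_i)h_i=1+\varepsilon(\cdots)$, $h_1^2-h_2^2=\varepsilon(\cdots)$, and $Q_1$ carries the prefactor $\varepsilon\mu$ on its first-order part.

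The delicate point, and the step I expect to be the main obstacle, is the first-order part of $Q_1$, namely $\tfrac23\varepsilon\mu\,\mathfrak{T}[h_1]^{-1}\partial_x\big((h_1^3u_{1x}-h_2^3u_{2x})f\big)$. Applying Lemma \ref{lem-operator-2}(ii) trades one $\mu^{1/2}$ for the operator and leaves $\varepsilon\sqrt\mu\,\|(h_1^3u_{1x}-h_2^3u_{2x})f\|_{H^{s-1}}$; writing $h_1^3u_{1x}-h_2^3u_{2x}=h_1^3\partial_x(u_1-u_2)+(h_1^3-h_2^3)u_{2x}$, the second summand is benign, producing $\varepsilon\sqrt\mu\,\|u_2\|_{H^{s}}\|\eta_1-\eta_2\|_{H^{s-1}}$ which is absorbed into the dependence of the constant on $\varepsilon\|(u_1,u_2)\|_{H^s}$. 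The first summand, however, forces the top-order quantity $\sqrt\mu\,\|\partial_x(u_1-u_2)\|_{H^{s-1}}$: this is precisely the $\sqrt\mu$-weighted derivative appearing in the energy-space ($X$-scaled) difference norm, so it is controlled by $\|U_1-U_2\|_{H^{s-1}}$ rather than costing a genuine derivative, the compensating $\sqrt\mu$ coming from the operator estimate. Matching this $\sqrt\mu$ to the definition of the difference norm is the point that must be made carefully; collecting all entrywise bounds then gives \eqref{diffe-opera-0}.
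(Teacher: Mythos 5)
Your argument for \eqref{diffe-opera-1} is exactly the paper's: the identity $\mathfrak{T}[h_1]^{-1}-\mathfrak{T}[h_2]^{-1}=\mathfrak{T}[h_1]^{-1}\big(\mathfrak{T}[h_2]-\mathfrak{T}[h_1]\big)\mathfrak{T}[h_2]^{-1}$, the factorization $h_1^3-h_2^3=(h_1-h_2)(h_1^2+h_1h_2+h_2^2)$, and Lemma \ref{lem-operator-2} applied twice so that the explicit $\mu$ is split between parts (i) and (ii). For \eqref{diffe-opera-0} the paper only remarks that ``a similar argument'' applies, and your entrywise expansion --- including the observation that the $\sqrt{\mu}\,\partial_x(u_1-u_2)$ contribution from $Q_1$ must be read against the $\sqrt{\mu}$-weighted ($X$-type) difference norm, which is indeed how the lemma is invoked in Lemma \ref{lem-uniqueness-1} --- correctly supplies the details the paper omits.
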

\begin{proof}
Denote $g=g_1-g_2:=(\mathfrak{T}[h_1])^{-1}f-(\mathfrak{T}[h_2])^{-1} f$, we have $f=\mathfrak{T}[h_1]g_1=\mathfrak{T}[h_2]g_2$, which implies
\begin{equation}\label{diffe-opera-2}
0= \mathfrak{T}[h_1]g_1-\mathfrak{T}[h_2]g_2=\mathfrak{T}[h_1](g_1-g_2)+(\mathfrak{T}[h_1]-\mathfrak{T}[h_2])g_2.
\end{equation}
There then  appears that
\begin{equation}\label{diffe-opera-3}
\begin{split}
 &g_1-g_2= -(\mathfrak{T}[h_1])^{-1}\bigg((\mathfrak{T}[h_1]-\mathfrak{T}[h_2])g_2\bigg)\\
&=-(\mathfrak{T}[h_1])^{-1}\bigg((h_1-h_2)g_2-\frac{\mu}{3} \partial_x((h_1-h_2)(h_1^2+h_1h_2+h_2^2)\partial_xg_2)\bigg).
\end{split}
\end{equation}
Therefore, thanks to Lemma \ref{lem-operator-2}, we have
\begin{equation}\label{diffe-opera-4}
\begin{split}
 &\|g_1-g_2\|_{H^{s-1}}\leq C \left(\frac{1}{b_0}, \|h_1-1\|_{H^{s}} \right)\\
 &\qquad\qquad\times \bigg(\|(h_1-h_2)g_2\|_{H^{s-1}}  + \sqrt{\mu}\|(h_1-h_2)(h_1^2+h_1h_2+h_2^2)\partial_xg_2\|_{H^{s-1}} \bigg)\\
 &\leq C \left(\frac{1}{b_0}, \|h_1-1\|_{H^{s}} \right)\|h_1-h_2\|_{H^{s-1}} \left(1+\|h_1-1\|_{H^{s-1}}^2+\|h_2-1\|_{H^{s-1}}^2 \right)\\
 &\qquad\qquad\times \bigg(\|g_2\|_{H^{s-1}}  + \sqrt{\mu}\|\partial_xg_2\|_{H^{s-1}} \bigg).
\end{split}
\end{equation}
Applying Lemma \ref{lem-operator-2} again, we infer
\begin{equation}\label{diffe-opera-5}
\begin{split}
 &\|g_2\|_{H^{s-1}} = \|(\mathfrak{T}[h_2])^{-1}f\|_{H^{s-1}} \leq C \left(\frac{1}{b_0}, \|h_2-1\|_{H^{s}} \right) \|f\|_{H^{s-1}},\\
 &\sqrt{\mu}\|\partial_xg_2\|_{H^{s-1}} \leq \sqrt{\mu}\|\partial_x(\mathfrak{T}[h_2])^{-1}f\|_{H^{s-1}}\leq C \left(\frac{1}{b_0}, \|h_2-1\|_{H^{s}} \right) \|f\|_{H^{s-1}},
\end{split}
\end{equation}
which, along with \eqref{diffe-opera-4}, implies \eqref{diffe-opera-1}.

A similar argument as above leads to \eqref{diffe-opera-0}.
\end{proof}

\subsection{Linearized analysis}
In order to prove Theorem \ref{thm-local}, we first establish the existence, uniqueness and regularity for solutions to the following linearized system of \eqref{R-GN-3}:
\begin{equation}\label{R-GN-linearized-1}
    \begin{cases}
        &\partial_t{U} + A[\underline{U}] \partial_x{U} =\varepsilon F; \\
        & U|_{t = 0} = U_0,
    \end{cases}
\end{equation}
where $F=(F_1, F_2)^{T}$, $ \underline{U} = (\underline{\eta},\  \underline{u})^{T} \in  X_{T}^{s}$ is such that $\partial_t{\underline{U}} \in X_{T}^{s - 1}$ and satisfies the condition (3) on $[0,\ \frac{T}{\varepsilon}]$.

\begin{lemma}\label{thm-linearized-1}
Let $s> \frac{3}{2}$. Assume that $\underline{U}
=(\underline{\eta}, \underline{u})^T\in X^s_T$  such that $\partial_t\underline{U}\in X^{s-1}_T$ and  the condition
\eqref{initial-condition-1}is satisfied  on $[0, \frac{T}{\varepsilon}]$. Then for all $U_0\in X^s$ there exists a unique solution $U=(\eta, u)^T\in X^s_T$
to \eqref{R-GN-linearized-1} and for all $0\leq t\leq\frac{T}{\varepsilon}$
\begin{equation*}
E^s(U(t))\leq e^{\varepsilon\lambda_T\,t}E^s(U_0)+C\, \varepsilon \,\int_0^{t}e^{\varepsilon\lambda_T(t-\tau)}\|F(\tau)\|_{X^s}^2\, d\tau,
\end{equation*}
for some $\lambda_T=\lambda_T(sup_{0\leq t\leq T/\varepsilon}E^s(\underline{U}(t)), sup_{0\leq t\leq T/\varepsilon}
\|(\partial_t\underline{\eta}(t), \partial_t\underline{u}(t), \sqrt{\mu}\underline{u}_{tx}(t))|_{L^\infty})$.
\end{lemma}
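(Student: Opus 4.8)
The plan is to prove the estimate first as an a priori bound for a sufficiently smooth solution, and then to upgrade it to existence and uniqueness. Since the coefficient operator $A[\underline U]$ in \eqref{R-GN-linearized-1} is nonlocal (it contains $\mathfrak{T}^{-1}$) but bounded on $X^s$ by Lemmas \ref{lem-operator-1}--\ref{lem-operator-2}, the whole difficulty is concentrated in the energy inequality; once that is in hand, existence follows by a routine regularization and uniqueness is immediate. The starting point is to differentiate the natural energy $E^s(U)^2=(\Lambda^s U,S\Lambda^s U)$ of \eqref{energy-equiv-1} in time, insert $\partial_t U=-A[\underline U]\partial_x U+\varepsilon F$, and commute $\Lambda^s$ through $A$ via $\Lambda^s(A\partial_x U)=A\partial_x\Lambda^s U+[\Lambda^s,A]\partial_x U$, the commutator being estimated by Lemma \ref{lemma-comm}.

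The algebraic heart of the matter is the near-symmetry of $S\,A[\underline U]$. Using $\mathfrak{T}_1\mathfrak{T}^{-1}=\kappa_\Omega$ together with $\kappa_\Omega\,(1-2\varepsilon\Omega\underline u)=1$, both off-diagonal blocks of $SA$ collapse to multiplication by $h=1+\varepsilon\underline\eta$, while the order-one part of the $(2,2)$ entry reduces to the multiplication operator $-2\Omega\kappa_\Omega h^2$. Hence the principal part of $SA$ is a symmetric matrix of multiplication operators whose entries differ from constants only by $O(\varepsilon)$ terms (as $h-1=\varepsilon\underline\eta$). Consequently, integrating by parts in the principal term replaces the symmetrized $(SA\,\partial_x\Lambda^s U,\Lambda^s U)$ by $-\tfrac12((\partial_x SA)\Lambda^s U,\Lambda^s U)$, and since $\partial_x$ annihilates the constant part this is $O(\varepsilon)$, the genuinely constant principal symbol integrating to zero as a total derivative. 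The remaining pieces of $SA$---namely $\kappa_\Omega\mathfrak{T}(\varepsilon\underline u\,\cdot)$ and the $\varepsilon\mu$-weighted dispersive part coming from $Q_1[\underline U]$---already carry a factor $\varepsilon$ and are controlled uniformly in $\mu\in(0,1)$ through the $\sqrt\mu$-gain estimates of Lemma \ref{lem-operator-2}; the same is true of $[\Lambda^s,A]\partial_x U$. Finally the term $(\Lambda^s U,(\partial_t S)\Lambda^s U)$ is where the time derivatives of $\underline U$ enter: writing $\partial_t S=\mathrm{diag}(0,(\partial_t\kappa_\Omega)\mathfrak{T}+\kappa_\Omega\partial_t\mathfrak{T})$ with $\partial_t\kappa_\Omega=2\varepsilon\Omega\kappa_\Omega^2\partial_t\underline u$ and $\partial_t\mathfrak{T}$ involving $\varepsilon\partial_t\underline\eta$, one finds, after integrating by parts in the $\mu$-weighted part of $\mathfrak{T}$ where a derivative lands on $\partial_t\kappa_\Omega$, a contribution bounded by $\varepsilon\,\|(\partial_t\underline\eta,\partial_t\underline u,\sqrt\mu\,\underline u_{tx})\|_{L^\infty}E^s(U)^2$. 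Combining all of this with the norm equivalence of Lemma \ref{lem-equiv-norm} gives $\tfrac{d}{dt}E^s(U)^2\le \varepsilon\lambda_T E^s(U)^2+\varepsilon(\Lambda^s F,S\Lambda^s U)$ with $\lambda_T$ of the stated form.

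For the forcing I would use Young's inequality, $\varepsilon(\Lambda^s F,S\Lambda^s U)\le \tfrac12\varepsilon E^s(U)^2+C\varepsilon\|F\|_{X^s}^2$, absorb the first term into $\lambda_T$, and apply Grönwall's lemma on $[0,T/\varepsilon]$ to obtain the asserted inequality. Existence then follows by regularizing \eqref{R-GN-linearized-1} with a Friedrichs mollifier, replacing $A[\underline U]\partial_x$ by the symmetric truncation $J_\delta A[\underline U]\partial_x J_\delta$ so that the energy computation above survives verbatim; the truncated problem is a linear ordinary differential equation in $X^s$ with bounded generator, hence globally solvable by Cauchy--Lipschitz, and the $\delta$-uniform energy bound produces a weak-$\star$ limit $U\in L^\infty([0,T/\varepsilon];X^s)$ solving \eqref{R-GN-linearized-1}, the time-continuity $U\in X^s_T$ being recovered by a Bona--Smith argument (weak continuity upgraded through continuity of $t\mapsto E^s(U(t))$). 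Uniqueness is read off the same estimate applied to the difference of two solutions with $F=0$ and zero data, where Lemma \ref{lem-operator-diff-1} controls the discrepancy of the operators. I expect the main obstacle to be exactly the $\varepsilon$-bookkeeping: certifying that the principal part of $SA$ is symmetric to the correct order and that every nonlocal or $\mu$-weighted remainder genuinely carries a factor $\varepsilon$ while remaining bounded as $\mu\to0$, since it is this uniform $\varepsilon$-gain in the growth rate that yields existence on the long time scale $[0,T/\varepsilon]$ rather than on $[0,T]$.
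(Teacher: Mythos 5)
Your proposal is correct and follows essentially the same route as the paper: the symmetrizer $S$ of \eqref{symmetrizer-1}, the energy $E^s(U)^2=(\Lambda^s U,S\Lambda^s U)$, the collapse of the off-diagonal blocks of $SA[\underline U]$ to multiplication by $\underline h$ via $\mathfrak{T}_1\mathfrak{T}^{-1}=\underline\kappa_\Omega$, integration by parts on the symmetric principal part, Lemma \ref{lemma-comm} for $[\Lambda^s,A]\partial_x U$, Lemma \ref{lem-operator-2} for the $\mathfrak{T}^{-1}$ and $\sqrt\mu$-weighted remainders, the $[\partial_t,\mathfrak{T}_1]$ term producing the $(\partial_t\underline\eta,\partial_t\underline u,\sqrt\mu\,\underline u_{tx})$ dependence of $\lambda_T$, and Gr\"onwall plus regularization for existence. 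Your packaging of the computation as ``$SA$ is symmetric with constant principal symbol modulo $O(\varepsilon)$, so the leading term is a total derivative'' is exactly the mechanism the paper implements term by term through its decompositions $A_1,\dots,A_4$ and $B_1,\dots,B_5$.
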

\begin{proof}
Using a smooth approximation argument and its uniform estimates, we may establish the existence part of Lemma \ref{thm-linearized-1}, and then applying Gronwall's lemma to achieve the uniqueness part. All the processes of the proof depend essentially on sone necessary {\it a priori} estimates. For the sake of simplicity, we just focus our attention on the proof of the uniform energy estimates.

In fact, for any $\lambda\in\mathbb{R}$ (to be determined in the last step), we have
\begin{equation}\label{linearized-est-1}
e^{\varepsilon\lambda t}\frac{d}{dt}(e^{-\varepsilon\lambda t}E^s(U)^2)=-\varepsilon\lambda E^s(U)^2
+\frac{d}{dt} (E^s(U)^2).
\end{equation}
By the definition of $E^s(U)^2$ and the identity
\begin{equation}\label{linearized-est-2}
(f, \, \underline{\mathfrak{T}}_1\,g)=(\underline{\mathfrak{T}}_1 \,f, \,g)+\frac{\mu}{3}\bigg(f, \, 2[\underline{\kappa}_{\Omega}]_x \underline{h}^3g_x+([\underline{\kappa}_{\Omega}]_{x}h^3)_xg\bigg),
\end{equation}
we get
\begin{equation}\label{linearized-est-3}
\begin{split}
&\frac{d}{dt}(E^s(U)^2)=2(\Lambda^s\eta, \Lambda^s\eta_t)+(\Lambda^su_t, \underline{\mathfrak{T}_1}\Lambda^su)+(\Lambda^su, \underline{\mathfrak{T}_1}\Lambda^su_t)
+(\Lambda^su, [\partial_t, \underline{\mathfrak{T}_1}]\Lambda^su)\\
&=2(\Lambda^s\eta, \Lambda^s\eta_t)+2(\Lambda^su, \underline{\mathfrak{T}_1}\Lambda^su_t)
+(\Lambda^su, [\partial_t, \underline{\mathfrak{T}_1}]\Lambda^su)+\frac{\mu}{3}\bigg(\Lambda^su_t, \, 2[\underline{\kappa}_{\Omega}]_x \underline{h}^3\Lambda^su_x\bigg)\\
&+\frac{\mu}{3}\bigg(\Lambda^su_t, \, ([\underline{\kappa}_{\Omega}]_{x}h^3)_x\Lambda^su\bigg),
\end{split}
\end{equation}
which along with the equation \eqref{R-GN-linearized-1} implies
\begin{equation}\label{energy-est-1}
\begin{split}
&\frac{1}{2}e^{\varepsilon\lambda t} \frac{d}{dt}(e^{-\varepsilon\lambda t}E^s(U)^2)=
-\frac{\varepsilon\lambda}{2}E^s(U)^2-(SA[\underline{U}]\Lambda^s\partial_xU, \Lambda^sU)\\
&-([\Lambda^s, A[\underline{U}]]
\partial_xU, S\Lambda^sU)+2\varepsilon(\Lambda^sU, A[\underline{U}]]\Lambda^sF)\\
&+\frac{1}{2}(\Lambda^su, [\partial_t, \underline{\mathfrak{T}_1}]\Lambda^su)+\frac{\mu}{6}\bigg(\Lambda^su_t, \, ([\underline{\kappa}_{\Omega}]_{x}h^3)_x\Lambda^su+ 2[\underline{\kappa}_{\Omega}]_x \underline{h}^3\Lambda^su_x\bigg).
\end{split}
\end{equation}
Thanks to the expression of the equation \eqref{R-GN-linearized-1} and Lemma \ref{lem-operator-2}, we have
\begin{equation*}
\begin{split}
&\|u_t\|_{H^s}=\|\mathfrak{T}^{-1}(\underline{\kappa}_{\Omega}^{-1}\underline{h}\eta_x )+ \varepsilon \underline{u} u_x+\frac{2}{3} \varepsilon \mu  \mathfrak{T}^{-1} \partial_x (\underline{h}^3 \underline{u}_x u_x)-2\Omega \mathfrak{T}^{-1}(\underline{h}^2 u_x) +\varepsilon F\|_{H^s}\\
& \quad \leq C\left(\frac{1}{b_0}, \,\|\underline{h}-1\|_{H^{s}} \right)\\
&\qquad\qquad\qquad \times (1+\varepsilon\|\underline{u}\|_{H^s}+\sqrt{\mu}\|\underline{u}_x\|_{H^s}) (1+\varepsilon\|\underline{\eta}\|_{H^s})^3(\|u_x\|_{H^s}+\|\eta_x\|_{H^s})+\varepsilon \|F\|_{H^s}.
\end{split}
\end{equation*}
On the other hand, it is found that
\begin{equation*}
|2\varepsilon(\Lambda^sU, A[\underline{U}]]\Lambda^sF)|\leq C\, \varepsilon \,E^s(U)\|F\|_{X^s},
\end{equation*}
and
\begin{equation*}
\begin{split}
&\|([\underline{\kappa}_{\Omega}]_{x}h^3)_x\Lambda^su+ 2[\underline{\kappa}_{\Omega}]_x \underline{h}^3\Lambda^su_x\|_{L^2}\\
& \quad \leq C \varepsilon (1+\varepsilon\|\underline{\eta}\|_{L^{\infty}})^3(\|\underline{u}_{xx}\|_{L^{\infty}}
+\varepsilon\|\underline{u}_{x}\|_{L^{\infty}}^2+\varepsilon\|\underline{u}_{x}\|_{L^{\infty}}\|\underline{u}_{x}\|_{L^{\infty}})\|u\|_{H^s}\\
&\qquad +C \varepsilon(1+\varepsilon\|\underline{\eta}\|_{L^{\infty}})^3 \|\underline{u}_{x}\|_{L^{\infty}} \|u_x\|_{H^s}.
\end{split}
\end{equation*}
It then follows that
\begin{equation*}
\begin{split}
&|\frac{\mu}{6}(\Lambda^su_t, \, ([\underline{\kappa}_{\Omega}]_{x}h^3)_x\Lambda^su+ 2[\underline{\kappa}_{\Omega}]_x \underline{h}^3\Lambda^su_x)|\\
& \quad \leq \frac{\mu}{6}\|u_t\|_{H^s}\|([\underline{\kappa}_{\Omega}]_{x}h^3)_x\Lambda^su+ 2[\underline{\kappa}_{\Omega}]_x \underline{h}^3\Lambda^su_x\|_{L^2}
\leq \varepsilon C(E^s(\underline{U}))E^s(U)^2 .
\end{split}
\end{equation*}
In order to estimate $(SA[\underline{U}]\Lambda^s\partial_xU, \Lambda^sU)$, we first deduce from
\begin{equation}\label{matrix-ope-1}
SA[\underline{U}]=\left(
                    \begin{array}{cc}
                      \varepsilon\underline{u} & \underline{h} \\
                      \underline{h} & \underline{\mathfrak{T}_1}(\varepsilon\underline{u}\cdot)+\underline{\kappa}_{\Omega} Q_1[\underline{U}] \\
                    \end{array}
                  \right)
\end{equation}
that
\begin{equation*}
\begin{split}
(SA[\underline{U}]\Lambda^s\partial_xU, \Lambda^sU)&=(\varepsilon\underline{u}\Lambda^s\eta_x, \Lambda^s\eta)
+(\underline{h}\Lambda^su_x, \Lambda^s\eta)+(\underline{h}\Lambda^s\eta_x, \Lambda^su)\\
&\quad+\bigg((\underline{\mathfrak{T}_1}(\varepsilon\underline{u}\cdot)+\underline{\kappa}_{\Omega} Q_1[\underline{U}])\Lambda^su_x, \Lambda^su\bigg)\\
&=:A_1+A_2+A_3+A_4.
\end{split}
\end{equation*}
We now focus on  the estimates of  bound for  each term $A_j$ ($j=1, 2, 3, 4$) step by step.

First, by using integrating by parts, there appears the relation
\begin{equation*}
A_1=(\varepsilon\underline{u}\Lambda^s\eta, \Lambda^s\eta_x)=-\frac{1}{2}(\varepsilon\underline{u_x}\Lambda^s\eta, \Lambda^s\eta),
\end{equation*}
which along with the Cauchy-Schwarz inequality yields
\begin{equation*}
|A_1|\leq \varepsilon \,\|\underline{u}_x\|_{L^\infty}\,E^s(U)^2.
\end{equation*}
Fro $A_2+A_3$, we notice the fact that
\begin{equation*}
|A_2+A_3|= |(\underline{h}_x\Lambda ^su, \Lambda^s\eta)|\leq \|\underline{h}_x\|_{L^\infty} \, E^s(U)^2,
\end{equation*}
which implies
\begin{equation*}
|A_2+A_3|\leq \varepsilon \,\|\underline{\eta}_x\|_{L^\infty}\,E^s(U)^2.
\end{equation*}
While for $A_4$, we split it into two parts
\begin{equation*}
A_4=\varepsilon(\underline{\kappa}_{\Omega}\underline{\mathfrak{T}}(\underline{u}\Lambda^su_x), \Lambda^su)+(\underline{\kappa}_{\Omega}Q_1[\underline{U}]\Lambda^su_x, \Lambda^su).
   =:A_{41}+A_{42}.
\end{equation*}
In view of  the definition of $\mathfrak{T} $, we have
\begin{equation*}
\begin{split}
A_{41} & = \varepsilon\bigg(\underline{\kappa}_{\Omega}\underline{h}\ \underline{u}\Lambda^su_x, \Lambda^su\bigg)+\frac{\varepsilon\mu}{3}
\bigg(\underline{h}^3(\underline{u}\Lambda^su_x)_x, \underline{\kappa}_{\Omega}\Lambda^su_x\bigg)\\
& \quad +\frac{\varepsilon\mu}{3}
\bigg(\underline{h}^3(\underline{u}\Lambda^su_x)_x, [\underline{\kappa}_{\Omega}]_x \Lambda^su\bigg)=:A_{411}+A_{412}+A_{413}.
\end{split}
\end{equation*}
Since
\begin{equation*}
\begin{split}
A_{411} & =\varepsilon\bigg(\underline{\kappa}_{\Omega}\underline{h} \underline{u}\Lambda^su_x, \Lambda^su\bigg)=-\frac{1}{2}\varepsilon\bigg((\underline{\kappa}_{\Omega}\underline{h}\underline{u})_x  \Lambda^su, \Lambda^su\bigg)\\
&=-\frac{1}{2}\varepsilon\bigg((\underline{\kappa}_{\Omega}(\underline{h}_x\underline{u}+\underline{h}\underline{u}_x)+(1-2\varepsilon\Omega \underline{u})^{-2} 2\varepsilon\Omega \,\underline{h}\underline{u} \underline{u}_x ) \Lambda^su, \Lambda^su\bigg),
\end{split}
\end{equation*}
it follows that
\begin{equation*}
\begin{split}
&|A_{411}| \leq C \, \varepsilon\, g_1(\|\underline{\eta}\|_{W^{1, \infty}}, \|\underline{u}\|_{W^{1, \infty}}) \|u\|_{H^s}^2
\end{split}
\end{equation*}
with
\begin{equation*}
\begin{split}
&g_1(\|\underline{\eta}\|_{W^{1, \infty}}, \|\underline{u}\|_{W^{1, \infty}}) =\varepsilon\|\underline{\eta}_x\|_{L^{\infty}}\|\underline{u}\|_{L^{\infty}}
+(1+\varepsilon\|\underline{\eta}\|_{L^{\infty}})(1+\varepsilon\Omega \|\underline{u}\|_{L^{\infty}})\|\underline{u}_x\|_{L^{\infty}}.
\end{split}
\end{equation*}
On the other hand, a direct computation reveals that
\begin{equation*}
\begin{split}
&\bigg(\underline{h}^3(\underline{u}\Lambda^su_x)_x,\  \underline{\kappa}_{\Omega} \Lambda^su_x\bigg)=\bigg(\underline{h}^3 \underline{u}(\Lambda^su_x)_x+\underline{h}^3 \underline{u}_x \Lambda^su_x ,\  \underline{\kappa}_{\Omega} \Lambda^su_x\bigg)\\
&=-\frac{1}{2}\bigg((\underline{\kappa}_{\Omega}\underline{h}^3 \underline{u})_x  \Lambda^su_x, \   \Lambda^su_x\bigg)
+\bigg(\underline{\kappa}_{\Omega} \underline{h}^3\underline{u}_x\Lambda^su_x, \ \Lambda^su_x)\bigg).
\end{split}
\end{equation*}
It is then deduced  that
\begin{equation*}
\begin{split}
|A_{412}| & \leq C\,\varepsilon \mu \bigg(\|(\underline{\kappa}_{\Omega}\underline{h}^3 \underline{u})_x\|_{L^{\infty}}+\|\underline{h}^3 \underline{u}_x\|_{L^{\infty}}\bigg)\|\Lambda^su_x\|_{L^2}^2\\
&\leq C\,\varepsilon \mu \bigg((1+\varepsilon\Omega \|\underline{u}\|_{L^{\infty}})\| \underline{u}_x\|_{L^{\infty}}\|\underline{h}\|_{L^{\infty}}^3 +\|\underline{h}\|_{L^{\infty}}^2\|\underline{h}_x\|_{L^{\infty}}  \|\underline{u}\|_{L^{\infty}}\bigg)\|\Lambda^su_x\|_{L^2}^2\\
&\leq C\,\varepsilon \, g_2(\|\underline{\eta}\|_{W^{1, \infty}}, \|\underline{u}\|_{W^{1, \infty}})\, \mu \, \|u_x\|_{H^s}^2
\end{split}
\end{equation*}
with
\begin{equation*}
\begin{split}
g_2 & (\|\underline{\eta}\|_{W^{1, \infty}}, \|\underline{u}\|_{W^{1, \infty}}) \\
&=(1+\varepsilon\Omega \|\underline{u}\|_{L^{\infty}})\| \underline{u}_x\|_{L^{\infty}}(1+\varepsilon\|\underline{\eta}\|_{L^{\infty}})^3 +(1+\varepsilon\|\underline{\eta} \|_{L^{\infty}})^2\varepsilon \|\underline{\eta}_x\|_{L^{\infty}}  \|\underline{u}\|_{L^{\infty}}.
\end{split}
\end{equation*}
On the other hand, a direct estimate of $A_{413}$ yields
\begin{equation*}
\begin{split}
|A_{413}| \leq &\ C\,\varepsilon\mu \|\underline{u}\|_{L^{\infty}} \bigg(\|\Lambda^su_x\|_{L^2}\|\Lambda^su\|_{L^2}(\|1+\varepsilon\eta\|_{L^{\infty}}^3\varepsilon\Omega \|\underline{u}_{xx}\|_{L^{\infty}}\\
&+\|1+\varepsilon\underline{\eta}\|_{L^{\infty}}^2\varepsilon\|\underline{\eta}_x\|_{L^{\infty}}\varepsilon\Omega \|\underline{u}_{x}\|_{L^{\infty}} +\|1+\varepsilon\underline{\eta}\|_{L^{\infty}}^3 \varepsilon^2\Omega^2 \|\underline{u}_{x}\|_{L^{\infty}}^2\big)\\
&+\|1+\varepsilon\underline{\eta}\|_{L^{\infty}}^3  \varepsilon\Omega \|\underline{u}_x\|_{L^{\infty}} \|\Lambda^su_x\|_{L^2}^2\bigg) \\
\leq & \  C\,\varepsilon^2\,  g_{3, 1}(\|\underline{\eta}\|_{W^{1, \infty}}, \|\underline{u}\|_{W^{1, \infty}}) \|u\|_{H^s}^2+C\,\varepsilon^2\, g_{3, 2}(\|\underline{\eta}\|_{W^{1, \infty}}, \|\underline{u}\|_{W^{1, \infty}}) \, \mu \, \|u_x\|_{H^s}^2\\
& + C\,\varepsilon^2\, g_{3, 3}(\|\underline{\eta}\|_{W^{1, \infty}}, \|\underline{u}\|_{W^{1, \infty}}) \, \sqrt{\mu}\, \|\underline{u}_{xx}\|_{L^{\infty}}\,  \sqrt{\mu} \, \|u_x\|_{H^s}\,  \|u\|_{H^s}.
\end{split}
\end{equation*}
with
\begin{equation*}
\begin{split}
g_{3, 1}& (\|\underline{\eta}\|_{W^{1, \infty}}, \|\underline{u}\|_{W^{1, \infty}})\\
&= \|\underline{u}\|_{L^{\infty}}^2 \mu \bigg(\|1+\varepsilon\underline{\eta}\|_{L^{\infty}}^2\varepsilon\|\underline{\eta}_x\|_{L^{\infty}}\varepsilon\Omega \|\underline{u}_{x}\|_{L^{\infty}} + \|1+\varepsilon\underline{\eta}\|_{L^{\infty}}^3 \varepsilon^2\Omega^2 \|\underline{u}_{x}\|_{L^{\infty}}^2\bigg)^2,\\
g_{3, 2}&(\|\underline{\eta}\|_{W^{1, \infty}}, \|\underline{u}\|_{W^{1, \infty}})\\
&= g_{3, 1}(\|\underline{\eta}\|_{W^{1, \infty}}, \|\underline{u}\|_{W^{1, \infty}})+ \|\underline{u}\|_{L^{\infty}}\|1+\varepsilon\underline{\eta}\|_{L^{\infty}}^3 \Omega \|\underline{u}_x\|_{L^{\infty}},
\end{split}
\end{equation*}
and
\begin{equation*}
\begin{split}
&g_{3, 3}(\|\underline{\eta}\|_{W^{1, \infty}}, \|\underline{u}\|_{W^{1, \infty}})=  \Omega \, \|\underline{u}\|_{L^{\infty}} \, \|1+\varepsilon\eta\|_{L^{\infty}}^3.
\end{split}
\end{equation*}
It is then adduced that
\begin{equation*}
|A_{41}|\leq \varepsilon \,C\, g_4(\|\underline{u}\|_{W^{1, \infty}}, \|\underline{\eta}\|_{W^{1, \infty}})(1+ E^s(\underline{U}))E^s(U)^2.
\end{equation*}
For $A_{42}$, it is inferred from the expression
\begin{equation*}
\begin{split}
|A_{42}| & =|(\underline{\kappa}_{\Omega}Q_1[\underline{U}]\Lambda^su_x, \Lambda^su)|=|\frac{2}{3}\varepsilon\mu\, \underline{\kappa}_{\Omega}(\underline{h}^3\underline{u}_x\Lambda^su_x)_x, \Lambda^su)|\\
& =\frac{2}{3}\varepsilon\mu|\underline{h}^3\underline{u}_x\Lambda^su_x, \underline{\kappa}_{\Omega} \Lambda^su_x+[\underline{\kappa}_{\Omega}]_x \Lambda^su)|
\end{split}
\end{equation*}
that
\begin{equation*}
|A_{42}|\leq \varepsilon g_5(|\underline{u}|_{w^{1, \infty}}, |\underline{\eta}|_{w^{1, \infty}})E^s(U)^2.
\end{equation*}
This thus implies that
\begin{equation*}
|A_{4}|\leq \varepsilon C \, g_6(\|\underline{u}\|_{W^{1, \infty}}, \|\underline{\eta}\|_{W^{1, \infty}})(1+ E^s(\underline{U}))E^s(U)^2.
\end{equation*}

We now turn to estimate of $([\Lambda^s, A[\underline{U}]]\partial_xU, S\Lambda^sU)$. We first utilize definitions of  $A[\underline{U}]$ and $S$ to get
\begin{equation*}
\begin{split}
&([\Lambda^s, A[\underline{U}]]\partial_xU, S\Lambda^sU)=([\Lambda^s, \varepsilon \underline{u}]\eta_x, \Lambda^s\eta)
+([\Lambda^s, \underline{h}]u_x, \Lambda^s\eta)\\
&\quad+([\Lambda^s, \underline{\mathfrak{T}}^{-1}(\underline{\kappa}_{\Omega}^{-1}\underline{h}\cdot)]\eta_x, \underline{\kappa}_{\Omega}\underline{\mathfrak{T}}\Lambda^su)+([\Lambda^s, \varepsilon\underline{u}]u_x, \underline{\kappa}_{\Omega}\underline{\mathfrak{T}}\Lambda^su)
+([\Lambda^s, \underline{\mathfrak{T}}^{-1}Q_1[\underline{U}]]u_x, \underline{\kappa}_{\Omega}\underline{\mathfrak{T}}\Lambda^su)\\
& \quad =:B_1+B_2+B_3+B_4+B_5.
\end{split}
\end{equation*}
Since $B_1+B_2=([\Lambda^s, \varepsilon \underline{u}]\eta_x, \Lambda^s\eta)
+([\Lambda^s, \varepsilon\underline{\eta}]u_x, \Lambda^s\eta)$ and $s>\frac{3}{2}$, we apply commutator estimates Lemma \ref{lemma-comm} to get
\begin{equation*}
|B_1+B_2|\leq\varepsilon C(E^s(\underline{U}))E^s(U)^2.
\end{equation*}
For $B_4$, we use the explicit expression of
$\underline{\mathfrak{T}}$ to obtain
\begin{equation*}
\begin{split}
B_4=&([\Lambda^s, \varepsilon \underline{u}]u_x, \underline{\kappa}_{\Omega}\underline{h}\Lambda^s u)+\frac{\mu}{3}(\partial_x[\Lambda^s, \varepsilon \underline{u}]u_x, \underline{\kappa}_{\Omega} \underline{h}^3\Lambda^s u_x)+\frac{\mu}{3}([\Lambda^s, \varepsilon \underline{u}]u_x, [\underline{\kappa}_{\Omega}]_x\underline{h}^3\Lambda^s u_x),
\end{split}
\end{equation*}
which, together with the fact that
\begin{equation*}
\partial_x[\Lambda^s, f]g=[\Lambda^s, f_x]g+[\Lambda^s, f]g_x
\end{equation*}
and then the Cauchy-Schwartz inequality and commutator estimates in Lemma \ref{lemma-comm}, yields
\begin{equation*}
|B_4|\leq\varepsilon C(E^s(\underline{U}))E^s(U)^2.
\end{equation*}

Next step is to deal  with $B_3=([\Lambda^s, \underline{\mathfrak{T}}^{-1}(\underline{\kappa}_{\Omega}^{-1}\underline{h}\cdot)]\eta_x, \underline{\kappa}_{\Omega}\underline{\mathfrak{T}}\Lambda^su)$. In view of  definition of the operator $\underline{\mathfrak{T}}_1$, we get for any $f$ and $g$
\begin{equation}\label{dual-ope-1}
(f, \, \underline{\mathfrak{T}}_1\,g)=(\underline{\mathfrak{T}} \,f, \,\underline{\kappa}_{\Omega} \,g)-\frac{\mu}{3}\bigg(2[\underline{\kappa}_{\Omega}]_x\underline{h}^3f_x+(3[\underline{\kappa}_{\Omega}]_x\underline{h}^2\underline{h}_x
+[\underline{\kappa}_{\Omega}]_{xx}h^3)f, g\bigg),
\end{equation}
which implies
\begin{equation}\label{dual-ope-2}
\begin{split}
&B_3=(\underline{\mathfrak{T}} \,[\Lambda^s, \underline{\mathfrak{T}}^{-1}(\underline{\kappa}_{\Omega}^{-1}\underline{h}\cdot)]\eta_x, \,\underline{\kappa}_{\Omega} \,\Lambda^su)\\
&\qquad-\frac{\mu}{3}\bigg(2[\underline{\kappa}_{\Omega}]_x\underline{h}^3f_x+(3[\underline{\kappa}_{\Omega}]_x\underline{h}^2\underline{h}_x
+[\underline{\kappa}_{\Omega}]_{xx}h^3)f, \Lambda^su\bigg)=:B_{31}+B_{32}
\end{split}
\end{equation}
with $f=[\Lambda^s, \underline{\mathfrak{T}}^{-1}(\underline{\kappa}_{\Omega}^{-1}\underline{h}\cdot)]\eta_x$.
It is noted that
\begin{equation*}
\begin{split}
\underline{\mathfrak{T}}[\Lambda^s, \underline{\mathfrak{T}}^{-1}\underline{\kappa}_{\Omega}^{-1}\underline{h}]\eta_x & =\underline{\mathfrak{T}}[\Lambda^s, \underline{\mathfrak{T}}^{-1}]\underline{\kappa}_{\Omega}^{-1}\underline{h}\eta_x+[\Lambda^s, \underline{\kappa}_{\Omega}^{-1}\underline{h}]\eta_x\\
&=-[\Lambda^s, \underline{\mathfrak{T}}]\underline{\mathfrak{T}}^{-1}\underline{\kappa}_{\Omega}^{-1}\underline{h}\eta_x
+[\Lambda^s, \underline{\kappa}_{\Omega}^{-1}\underline{h}]\eta_x.
\end{split}
\end{equation*}
It then follows from the definition  of $\underline{\mathfrak{T}}$ that
\begin{equation*}
\begin{split}
\underline{\mathfrak{T}}[\Lambda^s, \underline{\mathfrak{T}}^{-1}\underline{\kappa}_{\Omega}^{-1}\underline{h}]\eta_x=&-[\Lambda^s, \underline{h}]\underline{\mathfrak{T}}^{-1}\underline{\kappa}_{\Omega}^{-1}\underline{h}\eta_x
+\frac{\mu}{3}\partial_x\{[\Lambda^s, \underline{h}^3]\partial_x(\underline{\mathfrak{T}}^{-1}\underline{\kappa}_{\Omega}^{-1}\underline{h}\eta_x)\}+[\Lambda^s, \underline{\kappa}_{\Omega}^{-1}\underline{h}]\eta_x,
\end{split}
\end{equation*}
which gives rise to
\begin{equation*}
\begin{split}
|B_{31}| & =\bigg|(-[\Lambda^s, \underline{h}]\underline{\mathfrak{T}}^{-1}\underline{\kappa}_{\Omega}^{-1}\underline{h}\eta_x
+\frac{\mu}{3}\partial_x\{[\Lambda^s, \underline{h}^3]\partial_x(\underline{\mathfrak{T}}^{-1}\underline{\kappa}_{\Omega}^{-1}\underline{h}\eta_x)\}+[\Lambda^s, \underline{\kappa}_{\Omega}^{-1}\underline{h}]\eta_x, \underline{\kappa}_{\Omega} \,\Lambda^su)\bigg|\\
&\leq |([\Lambda^s, \underline{h}]\underline{\mathfrak{T}}^{-1}\underline{\kappa}_{\Omega}^{-1}\underline{h}\eta_x, \underline{\kappa}_{\Omega} \,\Lambda^su)|
+\frac{\mu}{3}|([\Lambda^s, \underline{h}^3]\partial_x(\underline{\mathfrak{T}}^{-1}\underline{\kappa}_{\Omega}^{-1}\underline{h}\eta_x), [\underline{\kappa}_{\Omega}]_{x} \,\Lambda^su+\underline{\kappa}_{\Omega} \,\Lambda^su_x)|\\
&\quad +\varepsilon |([\Lambda^s, 2\Omega\underline{u}-\underline{\eta}+2\varepsilon \Omega \underline{u}\underline{\eta}]\eta_x, \underline{\kappa}_{\Omega} \,\Lambda^su)|.
\end{split}
\end{equation*}
Hence, it is thereby inferred  that
\begin{equation*}
\begin{split}
|B_{31}| & \leq \|[\Lambda^s, \varepsilon\underline{\eta}]\underline{\mathfrak{T}}^{-1}\underline{\kappa}_{\Omega}^{-1}\underline{h}\eta_x\|_{L^2}\|\underline{\kappa}_{\Omega} \,\Lambda^su\|_{L^2}\\
& \quad +\frac{\mu}{3}\|[\Lambda^s, \underline{h}^3]\partial_x(\underline{\mathfrak{T}}^{-1}\underline{\kappa}_{\Omega}^{-1}\underline{h}\eta_x)\|_{L^2} (\|[\underline{\kappa}_{\Omega}]_{x} \,\Lambda^su\|_{L^2}+\|\underline{\kappa}_{\Omega} \,\Lambda^su_x\|_{L^2})\\
&\quad +\varepsilon\|[\Lambda^s, 2\Omega\underline{u}-\underline{\eta}+2\varepsilon \Omega \underline{u}\underline{\eta}]\eta_x\|_{L^2}\|\underline{\kappa}_{\Omega} \,\Lambda^su\|_{L^2}=:B_{311}+B_{312}+B_{313}.
\end{split}
\end{equation*}
It is then deduced from   Lemmas \ref{lemma-comm} and \ref{lem-operator-2} that
\begin{equation*}
\begin{split}
B_{311} & \leq C \varepsilon\|\underline{\eta}\|_{H^{s}}\|\underline{\mathfrak{T}}^{-1}\underline{\kappa}_{\Omega}^{-1}\underline{h}\eta_x\|_{H^{s-1}}\|u\|_{H^{s}}\\
&\leq C\left(\frac{1}{b_0}, \|\underline{h}-1\|_{H^s} \right)(1+\varepsilon \Omega\|\underline{u}\|_{H^{s-1}})(1+\varepsilon\|\underline{\eta}\|_{H^{s-1}})\varepsilon\|\underline{\eta}\|_{H^{s}}\|\eta_x\|_{H^{s-1}}\|u\|_{H^{s}},\\
B_{312} & \leq C \mu \|\varepsilon \eta_x \underline{h}^2\|_{H^{s-1}}\|\underline{\mathfrak{T}}^{-1}\underline{\kappa}_{\Omega}^{-1}\underline{h}\eta_x\|_{H^{s}} (\|\varepsilon \Omega\|\underline{u}_x\|_{L^{\infty}}\|u\|_{H^s}+(1+\|\varepsilon \Omega\|\underline{u}\|_{L^{\infty}})\|u_x\|_{H^s})\\
&\leq C(\frac{1}{b_0}, \|\underline{h}-1\|_{H^s})(1+\varepsilon \Omega\|\underline{u}\|_{H^{s}})(1+\varepsilon\|\underline{\eta}\|_{H^{s}})^3 \mu \varepsilon  \| \eta_x\|_{H^{s-1}}\|\underline{h}\eta_x\|_{H^{s}} \\
&\quad\times (\|\varepsilon \Omega\|\underline{u}_x\|_{L^{\infty}}\|u\|_{H^s}(1+\|\varepsilon \Omega\|\underline{u}\|_{L^{\infty}})\|u_x\|_{H^s}),
\end{split}
\end{equation*}
and
\begin{equation*}
\begin{split}
B_{313} &\leq C \varepsilon\|\Omega\underline{u}-\underline{\eta}+2\varepsilon \Omega \underline{u}\underline{\eta}\|_{H^s}\|\eta_x\|_{H^{s-1}}  \|u\|_{H^s}\\
&\leq C \varepsilon(\|\underline{u}\|_{H^s}+\|\underline{\eta}\|_{H^s}+\varepsilon \|\underline{u}\|_{H^s}\|\underline{\eta}\|_{H^s})\|\eta_x\|_{H^{s-1}}  \|u\|_{H^s} .
\end{split}
\end{equation*}
While for $B_{32}$, we have
\begin{equation*}
\begin{split}
|B_{32}| &\leq \frac{\mu}{3}\bigg(|(f, (2[\underline{\kappa}_{\Omega}]_x\underline{h}^3)_x \Lambda^su+2[\underline{\kappa}_{\Omega}]_x\underline{h}^3\Lambda^su_x)|+(3[\underline{\kappa}_{\Omega}]_x\underline{h}^2\underline{h}_x
+[\underline{\kappa}_{\Omega}]_{xx}h^3)f, \Lambda^su)|\bigg)\\
&\leq C \mu\varepsilon \|f\|_{L^2}\bigg(\|u\|_{H^s}(\|\underline{u}\|_{H^s}(1+\varepsilon\|\underline{\eta}\|_{H^s})^3+\|\underline{u}_x\|_{L^{\infty}}
(1+\varepsilon\|\underline{\eta}\|_{L^{\infty}})^2 \varepsilon\|\underline{\eta}_x\|_{L^{\infty}}\\
&\qquad \qquad \qquad +\|\underline{u}_{xx}\|_{L^{\infty}}
(1+\varepsilon\|\underline{\eta}\|_{L^{\infty}})^3  )+\|\underline{u}_{x}\|_{L^{\infty}}
(1+\varepsilon\|\underline{\eta}\|_{L^{\infty}})^3 \|u_x\|_{H^s}\bigg),
\end{split}
\end{equation*}
which along with
\begin{equation*}
\begin{split}
  \|f\|_{L^2}&=\|[\Lambda^s, \underline{\mathfrak{T}}^{-1}(\underline{\kappa}_{\Omega}^{-1}\underline{h}\cdot)]\eta_x\|_{L^2}\leq C (\|\underline{\mathfrak{T}}^{-1}(\underline{\kappa}_{\Omega}^{-1}\underline{h}\Lambda^s\eta_x)\|_{L^2}
  +\|\underline{\mathfrak{T}}^{-1}(\underline{\kappa}_{\Omega}^{-1}\underline{h}\eta_x)\|_{H^s})\\
  &\leq C\left(\frac{1}{b_0}, \|\underline{h}-1\|_{H^s} \right)(1+\varepsilon \Omega\|\underline{u}\|_{H^{s}})(1+\varepsilon\|\underline{\eta}\|_{H^{s}})\|\underline{h}\eta_x)\|_{H^s}
  \end{split}
\end{equation*}
implies
\begin{equation*}
|B_{32}|\leq \varepsilon C(E^s(\underline{U}))E^s(U)^2.
\end{equation*}
Therefore, we deduce that
\begin{equation*}
|B_3|\leq \varepsilon C(E^s(\underline{U}))E^s(U)^2.
\end{equation*}
To control $B_5=([\Lambda^s, \underline{\mathfrak{T}}^{-1}Q_1[\underline{U}]]u_x, \underline{\kappa}_{\Omega}\underline{\mathfrak{T}}\Lambda^su)$, let us first write
\begin{equation*}
\underline{\mathfrak{T}}[\Lambda^s, \underline{\mathfrak{T}}^{-1}Q_1[\underline{U}]]u_x=-[\Lambda^s, \underline{\mathfrak{T}}]\underline{\mathfrak{T}}^{-1}Q_1[\underline{U}]u_x+[\Lambda^s, Q_1[\underline{U}]]u_x
\end{equation*}
so, that
\begin{equation*}
\begin{split}
&\underline{\mathfrak{T}}[\Lambda^s, \underline{\mathfrak{T}}^{-1}Q_1[\underline{U}]]u_x\\
&=-[\Lambda^s, \underline{h}]\underline{\mathfrak{T}}^{-1}
Q_1[\underline{U}]u_x+\frac{\mu}{3}\partial_x\{[\Lambda^s, \underline{h}^3]\partial_x(\underline{\mathfrak{T}}^{-1}Q_1[\underline{U}]u_x)\}+[\Lambda^s, Q_1[\underline{U}]]u_x.
\end{split}
\end{equation*}
By using the explicit expression of $Q_1[\underline{U}]$:
\begin{equation*}
Q_1[\underline{U}]f=\frac{2}{3}\varepsilon\mu\partial_x(\underline{h}^3\underline{u}_xf)-2\Omega \underline{h}^2f
\end{equation*}
and the fact that
\begin{equation*}
\begin{split}
(\partial_x\{[\Lambda^s, \underline{h}^3]\partial_x(\underline{\mathfrak{T}}^{-1}Q_1[\underline{U}]u_x)\}, \Lambda^su)
&=-\frac{2}{3}\varepsilon\mu([\Lambda^s, \underline{h}^3]\partial_x(\underline{\mathfrak{T}}^{-1}\partial_x(\underline{h}^3
\underline{u}_xu_x)), \Lambda^su_x)\\
&\quad +(\{[\Lambda^s, \underline{h}^3]\partial_x(\underline{\mathfrak{T}}^{-1}2\Omega \underline{h}^2u_x)\}, \Lambda^su_x),
\end{split}
\end{equation*}
and then repeating the similar argument in the estimate of $B_3$, it is found that
\begin{equation*}
|B_5|\leq \varepsilon C(E^s(\underline{U}))E^s(U)^2.
\end{equation*}
For $(\Lambda^su, [\partial_t, \underline{\kappa}_{\Omega}\underline{\mathfrak{T}}]\Lambda^su)$, we first rewrite it as follows
\begin{equation*}
\begin{split}
(\Lambda^su, [\partial_t, \underline{\kappa}_{\Omega}\underline{\mathfrak{T}}]\Lambda^su)=&\ (\Lambda^su, ( \underline{\kappa}_{\Omega} \underline{h})_t\Lambda^su)+\frac{\mu}{3}
(\Lambda^su_x, (\underline{\kappa}_{\Omega} \underline{h}^3)_t\Lambda^su_x)\\
&+\frac{\mu}{3}
(\Lambda^su, ([\underline{\kappa}_{\Omega}]_x \underline{h}^3)_t\Lambda^su_x).
\end{split}
\end{equation*}
By making use of this form, we get
\begin{equation*}
\begin{split}
&|(\Lambda^su, [\partial_t, \underline{\kappa}_{\Omega}\underline{\mathfrak{T}}]\Lambda^su)|\leq (\|u\|_{H^s}^2 +\frac{\mu}{3}
\|u_x\|_{H^s}^2)\|(\underline{\kappa}_{\Omega} \underline{h}^3)_t\|_{L^{\infty}}+\frac{\mu}{3}
\|u\|_{H^s}\|u_x\|_{H^s}\|([\underline{\kappa}_{\Omega}]_x \underline{h}^3)_t\|_{L^{\infty}}\\
& \quad \leq C\, \varepsilon\,(\|u\|_{H^s}^2 +\frac{\mu}{3}
\|u_x\|_{H^s}^2)(\|\underline{u}_t\|_{L^{\infty}}+\|\underline{u}_x\|_{L^{\infty}}\|\underline{u}_t\|_{L^{\infty}}
+\sqrt{\mu}\|\underline{u}_{xt}\|_{L^{\infty}}+\|\underline{\eta}_t\|_{L^{\infty}})\\
&\quad\times(1+\|\underline{\eta}_t\|_{L^{\infty}}+\varepsilon \|\underline{\eta}\|_{L^{\infty}})^3 \leq \varepsilon\,C(E^s(\underline{U}), \|\underline{\eta}_t\|_{L^\infty}, \|\underline{u}_t\|_{L^\infty}, \sqrt{\mu}\|\underline{u}_{tx}\|_{L^\infty})E^s(U)^2.
\end{split}
\end{equation*}
It is thereby adduced that
\begin{equation*}
\begin{split}
&e^{\varepsilon\lambda t} \frac{d}{dt}(e^{-\varepsilon\lambda t}E^s(U)^2)\leq \varepsilon\,\bigg(C(E^s(\underline{U}), \|\underline{\eta}_t\|_{L^\infty}, \|\underline{u}_t\|_{L^\infty}, \sqrt{\mu}\|\underline{u}_{tx}\|_{L^\infty})- \lambda\bigg) E^s(U)^2+C\, \varepsilon \,\|F\|_{X^s}^2.
\end{split}
\end{equation*}
We now take  $\lambda=\lambda_T$ large enough (depending on $\sup_{t\in[0, \frac{T}{\varepsilon}]}C(E^s(\underline{U}), \|\underline{\eta}_t\|_{L^\infty}, \|\underline{u}_t\|_{L^\infty}, \sqrt{\mu}\|\underline{u}_{tx}\|_{L^\infty})$) so that  the first term of the right-hand side is negative for all $t\in[0, \frac{T}{\varepsilon}]$.
This then follows that
\begin{equation*}
\forall t\in \left[0, \frac{T}{\varepsilon}\right],\quad e^{\varepsilon\lambda t}\frac{d}{dt}(e^{-\varepsilon\lambda t}E^s(U)^2)
\leq C\, \varepsilon \,\|F\|_{X^s}^2.
\end{equation*}
Integrating this differential inequality yields that
\begin{equation*}
\forall t\in \left[0, \frac{T}{\varepsilon} \right],\quad E^s(U(t))\leq e^{\varepsilon\lambda_Tt}E^s(U_0)+C\, \varepsilon \,\int_0^{t}e^{\varepsilon\lambda_T(t-\tau)}\|F(\tau)\|_{X^s}^2\, d\tau.
\end{equation*}
This completes  the proof of Lemma \ref{thm-linearized-1}.
\end{proof}

\subsection{Proof of Theorem \ref{thm-local}}
Firstly, uniqueness and continuity with respect to the initial data are an immediate
consequence of the following result.
\begin{lemma}\label{lem-uniqueness-1} Let $ s $ and $b_0$ be as in the statement of Theorem \ref{thm-local}, $U^{(1)} $ and $U^{(2)} $ be two given solutions of the initial-value problem \eqref{R-GN-3} with the initial data
$U^{(1)}_0,\, U^{(2)}_0 \in X^s$ satisfying $U^{(1)}, \, U^{(2)} \in X^s_T$. Then for every
$t \in [0, \frac{T}{\varepsilon}]:$
\begin{equation}\label{unique-1}
\begin{split}
&\|U^{(1)}(t)-U^{(2)}(t)\|_{X^{s-1}}\leq \|U^{(1)}_0-U^{(2)}_0\|_{X^{s-1}} e^{C\varepsilon\int_{0}^{t}(\|U^{(1)}(\tau)\|_{X^{s}}+\|U^{(2)}(\tau)\|_{X^{s}}+1)\,d\tau}.
\end{split}
\end{equation}
\end{lemma}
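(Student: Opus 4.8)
The plan is to recast the difference $V := U^{(1)}-U^{(2)}$ as a solution of a linearized system of the form \eqref{R-GN-linearized-1} and then to re-run the symmetrized energy estimate of Lemma \ref{thm-linearized-1}, but one derivative lower, at the level of $X^{s-1}$. Subtracting the two copies of \eqref{R-GN-3} and writing
\[
A[U^{(1)}]\partial_x U^{(1)}-A[U^{(2)}]\partial_x U^{(2)}=A[U^{(1)}]\partial_x V+\big(A[U^{(1)}]-A[U^{(2)}]\big)\partial_x U^{(2)},
\]
we see that $V$ solves
\[
\partial_t V+A[U^{(1)}]\partial_x V=\varepsilon F,\qquad V|_{t=0}=U^{(1)}_0-U^{(2)}_0,
\]
where the reference field is $\underline{U}=U^{(1)}$ and the forcing is $\varepsilon F:=-\big(A[U^{(1)}]-A[U^{(2)}]\big)\partial_x U^{(2)}$. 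This is exactly the structure treated in the linearized analysis, with the extra feature that $F$ is built from the difference of the two solutions.

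The first key step is to control $F$. Applying estimate \eqref{diffe-opera-0} of Lemma \ref{lem-operator-diff-1} with $f=\partial_x U^{(2)}$, and using $\|\partial_x U^{(2)}\|_{H^{s-1}}\le \|U^{(2)}\|_{H^{s}}$, one obtains
\[
\|F\|_{X^{s-1}}\le C\Big(\tfrac{1}{b_0},\,\|U^{(1)}\|_{X^{s}},\,\|U^{(2)}\|_{X^{s}}\Big)\,\|U^{(2)}\|_{X^{s}}\,\|V\|_{X^{s-1}},
\]
the $\sqrt{\mu}$-weighted top derivative being absorbed by the $\mu$-uniform operator bounds of Lemma \ref{lem-operator-2} exactly as in the derivation of \eqref{diffe-opera-0}. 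The loss of one derivative on $U^{(2)}$ is precisely the reason the stability estimate is carried out in $X^{s-1}$ rather than $X^{s}$; note also that the difference of operators carries an explicit factor $\varepsilon$, so $F$ is genuinely of lower order.

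The second step is the energy estimate itself. Following verbatim the computation in the proof of Lemma \ref{thm-linearized-1} with $s$ replaced by $s-1$ and reference field $\underline{U}=U^{(1)}$, using the symmetrizer $S$ of \eqref{symmetrizer-1}, the commutator bounds of Lemma \ref{lemma-comm} and the operator estimates of Lemma \ref{lem-operator-2}, one arrives at a differential inequality
\[
\frac{d}{dt}\,E^{s-1}(V)^2\le \varepsilon\,C\big(\|U^{(1)}\|_{X^{s}},\,\|U^{(2)}\|_{X^{s}}\big)\,E^{s-1}(V)^2,
\]
where the forcing contribution has been absorbed via the bound above and the Cauchy--Schwarz inequality, and the time-derivative norms $\|\partial_t\underline{\eta}\|_{L^\infty}$, $\|\partial_t\underline{u}\|_{L^\infty}$, $\sqrt{\mu}\|\partial_t\partial_x\underline{u}\|_{L^\infty}$ entering $\lambda_T$ have been traded for spatial $X^{s}$-norms of $U^{(1)}$ through the equation $\partial_t U^{(1)}=-A[U^{(1)}]\partial_x U^{(1)}$ together with the Sobolev embedding $H^{s-1}\hookrightarrow L^\infty$ (valid since $s>\tfrac32$). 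Gronwall's lemma and the uniform equivalence of $E^{s-1}(V)$ with $\|V\|_{X^{s-1}}$ from Lemma \ref{lem-equiv-norm} then yield \eqref{unique-1}; taking $U^{(1)}_0=U^{(2)}_0$ gives uniqueness, and the inequality itself gives continuous dependence on the data.

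The main obstacle is bookkeeping rather than conceptual: one must verify that every term produced by the $X^{s-1}$ energy computation—in particular the commutator $[\partial_t,\underline{\kappa}_\Omega\underline{\mathfrak{T}}]$ and the cross terms arising because the symmetrizer depends on $U^{(1)}$—carries a factor $\varepsilon$ and is controlled solely by $\|U^{(1)}\|_{X^s}+\|U^{(2)}\|_{X^s}+1$. The explicit $\varepsilon$ is what produces the $\varepsilon$-weighted exponent in \eqref{unique-1}, and hence a bound that remains meaningful on the long time interval $[0,\tfrac{T}{\varepsilon}]$; losing track of it would degrade the time scale. The $O(\varepsilon)$ smallness of $A[U^{(1)}]-A[U^{(2)}]$ supplied by Lemma \ref{lem-operator-diff-1} is exactly what makes this possible.
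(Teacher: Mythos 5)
Your proposal is correct and follows essentially the same route as the paper: the paper also writes the difference as a solution of the linearized transport equation with reference field $U^{(1)}$ and forcing $(A[U^{(2)}]-A[U^{(1)}])\partial_x U^{(2)}$, bounds that forcing by $\varepsilon C\|U^{(2)}\|_{X^s}\|U^{(1)}-U^{(2)}\|_{X^{s-1}}$ via Lemma~\ref{lem-operator-diff-1}, and then repeats the $E^{s-1}$ energy argument of Lemma~\ref{thm-linearized-1} followed by Gronwall. Your added remarks on trading the time-derivative norms for spatial $X^s$-norms through the equation and on tracking the explicit factor of $\varepsilon$ are details the paper leaves implicit, but they do not change the argument.
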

\begin{proof}
Denote $U^{(12)} = U^{(2)}-U^{(1)}$. Then
$U^{(12)} \in X_T^s$ solves the transport equations
\begin{equation}\label{unique-2}
    \begin{cases}
        &\partial_t{U^{(12)}} + A[U^{(1)}] \partial_x{U^{(12)}}+ (A[U^{(2)}]-A[U^{(1)}]) \partial_x{U^{(2)}}= 0; \\
        & U^{(12)}|_{t = 0} = U^{(2)}_0-U^{(1)}_0.
    \end{cases}
\end{equation}
For $E^{s-1}(U^{(12)})^2:=(\Lambda^s U^{(12)}, S(U^{(1)})\, \Lambda^s U^{(12)})$, we get
\begin{equation}\label{unique-3}
\begin{split}
\frac{d}{dt}(E^{s-1}(U^{(12)})^2) & =2(\Lambda^{s-1}\eta^{(12)}, \Lambda^{s-1}\eta^{(12)}_t)+(\Lambda^{s-1}u^{(12)}_t, \underline{\mathfrak{T}_1}\Lambda^{s-1}u^{(12)})\\
&\quad +(\Lambda^{s-1}u^{(12)}, \underline{\mathfrak{T}_1}\Lambda^{s-1}u^{(12)}_t)
+(\Lambda^{s-1}u^{(12)}, [\partial_t, \underline{\mathfrak{T}_1}]\Lambda^{s-1}u^{(12)}).
\end{split}
\end{equation}
On the other hand, applying Lemma \ref{lem-operator-diff-1} ensures that
\begin{equation}\label{unique-4}
\begin{split}
\|(A[U^{(2)}]-A[U^{(1)}]) \partial_x{U^{(2)}}\|_{X^{s-1}} \leq \varepsilon\, C\,\|U^{(2)}\|_{X^{{s}}} \|U^{(12)}\|_{X^{s-1}}.
\end{split}
\end{equation}
Consequently, in view of  Lemmas \ref{lem-operator-diff-1} and \ref{thm-linearized-1}, and \eqref{unique-2}-\eqref{unique-3},   the advertised result can be obtained  by repeating the argument in the proof of Lemma \ref{thm-linearized-1}.
\end{proof}

Next, we shall use the classical Friedrichs' regularization method to construct the approximate solutions to the Green-Naghdi equations \eqref{R-GN-3}.

\begin{lemma}\label{lem-linear-appro-1}
Let $U_0$, $ s $, and $b_0$ be as in the statement of Theorem \ref{thm-local}. Assume that $U^{(0)}:= U_0$.  Then there
exist a sequence of times $(T^{(n)})_{n \in
\mathbb{N}}$ and smooth functions $(U^{(n)})_{n \in
\mathbb{N}} \in \mathcal{C}([0, \frac{T^{(n)}}{\varepsilon}]; X^{s})$
solving the following linear transport equation by induction:
\begin{equation}\label{appro-eqns-1}(GN_n)~~~~~~
\begin{cases}
 & \partial_{t}U^{(n+1)} + A[U^{(n)}]\partial_{x}U^{(n+1)}= 0, \quad t > 0,\, x\in \mathbb{R},\\
    & U^{(n+1)}|_{t = 0} = U_{0}, \quad x\in \mathbb{R}.
\end{cases}
\end{equation}
Moreover, there is a positive time $T$ ($<T^{(n)}$ for all $n \in \mathbb{N}$) such that the corresponding solutions satisfy
the following properties:

(i). $(U^{(n)})_{n \in \mathbb{N}}$ is uniformly bounded in $X^{s}_T$.

(ii). $(U^{(n)})_{n \in \mathbb{N}}$ is a Cauchy sequence in
$X^{s-1}_T$.
\end{lemma}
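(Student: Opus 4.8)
The plan is to run a Picard-type iteration in which each step solves a \emph{linear} (nonlocal) transport system, controlling every iterate by the energy estimate of Lemma~\ref{thm-linearized-1}. I proceed by induction on $n$. Suppose $U^{(n)}\in X^s_{T}$ with $\partial_tU^{(n)}\in X^{s-1}_{T}$, and suppose $U^{(n)}$ satisfies the non-degeneracy condition \eqref{initial-condition-1} (with $b_0$ replaced by $\frac{b_0}{2}$) on $[0,\frac{T}{\varepsilon}]$. Then $A[U^{(n)}]$ is an admissible coefficient operator, so Lemma~\ref{thm-linearized-1} applied with $\underline U=U^{(n)}$ and $F=0$ produces a unique $U^{(n+1)}\in X^s_{T}$ solving \eqref{appro-eqns-1}, together with the bound $E^s(U^{(n+1)}(t))\le e^{\varepsilon\lambda_T t}E^s(U_0)$. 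Reading off $\partial_tU^{(n+1)}=-A[U^{(n)}]\partial_xU^{(n+1)}$ and invoking Lemma~\ref{lem-operator-2} to handle the $\mathfrak{T}^{-1}$-factors shows $\partial_tU^{(n+1)}\in X^{s-1}_{T}$, while continuity in time at the top level $X^s$ comes from the Friedrichs mollification built into the proof of Lemma~\ref{thm-linearized-1}. This yields the sequence $(U^{(n)})$ and the existence times $T^{(n)}$.

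For (i) I would fix a radius $R>E^s(U_0)$ once and for all and prove $\sup_{[0,T/\varepsilon]}E^s(U^{(n)})\le R$ for a single $T$ independent of $n$ and of $(\varepsilon,\mu)$. The key observation is that the growth rate $\lambda_T$ in Lemma~\ref{thm-linearized-1} depends only on $\sup E^s(\underline U)$ and on $\|\partial_t\underline\eta\|_{L^\infty},\|\partial_t\underline u\|_{L^\infty},\sqrt{\mu}\|\underline u_{tx}\|_{L^\infty}$; using the equation $(GN_{n-1})$, the embedding $X^s\hookrightarrow W^{1,\infty}$ (valid since $s>\frac32$) and Lemma~\ref{lem-operator-2}, all of these are bounded by a constant $\Lambda(R)$ as soon as the previous two iterates lie in the ball of radius $R$. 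Hence $e^{\varepsilon\lambda_T t}\le e^{\Lambda(R)T}$ for $t\le\frac{T}{\varepsilon}$, and choosing $T$ small enough that $e^{\Lambda(R)T}E^s(U_0)\le R$ closes the induction; Lemma~\ref{lem-equiv-norm} then converts this into a uniform $X^s_T$ bound. The same uniform $L^\infty$-control of $\partial_t\eta^{(n+1)},\partial_tu^{(n+1)}$, together with $\varepsilon\eta^{(n+1)}(t)=\varepsilon\eta_0+\varepsilon\int_0^t\partial_t\eta^{(n+1)}\,d\tau$, gives $|\varepsilon\eta^{(n+1)}(t)-\varepsilon\eta_0|\le T\Lambda(R)$ on $[0,\frac{T}{\varepsilon}]$ (the factor $\varepsilon$ cancels the time horizon $\frac{T}{\varepsilon}$), and likewise for $1-2\Omega\varepsilon u^{(n+1)}$; shrinking $T$ so that $T\Lambda(R)\le\frac{b_0}{2}$ propagates \eqref{initial-condition-1} with constant $\frac{b_0}{2}$, which is exactly the hypothesis needed to restart the induction.

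For (ii) I would estimate the difference $W^{(n)}:=U^{(n+1)}-U^{(n)}$, which solves
\begin{equation*}
\partial_tW^{(n)}+A[U^{(n)}]\partial_xW^{(n)}=-\big(A[U^{(n)}]-A[U^{(n-1)}]\big)\partial_xU^{(n)},\qquad W^{(n)}|_{t=0}=0,
\end{equation*}
since all iterates share the datum $U_0$. Applying the energy estimate of Lemma~\ref{thm-linearized-1} at the level $s-1$ with forcing given by the right-hand side, and bounding that forcing through the difference estimate \eqref{diffe-opera-0} of Lemma~\ref{lem-operator-diff-1} by $\varepsilon\,C(R)\,\|W^{(n-1)}\|_{X^{s-1}}$, a Gronwall argument on $[0,\frac{T}{\varepsilon}]$ yields $\sup_{[0,T/\varepsilon]}\|W^{(n)}\|_{X^{s-1}}\le C(R)\,T\,e^{C(R)T}\sup_{[0,T/\varepsilon]}\|W^{(n-1)}\|_{X^{s-1}}$, where once again $\varepsilon\cdot\frac{T}{\varepsilon}=T$ absorbs the long time interval. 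After a final shrinking of $T$ so that $C(R)Te^{C(R)T}\le\frac12$, the iteration is a contraction in $X^{s-1}_T$, hence $(U^{(n)})$ is Cauchy there; this is the same mechanism already exploited in Lemma~\ref{lem-uniqueness-1}.

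The main obstacle is organizational rather than computational: one must exhibit a \emph{single} time $T>0$, independent of $n$ and uniform in $(\varepsilon,\mu)\in(0,\frac12)^2$, that simultaneously closes the a priori bound of (i), preserves the non-degeneracy \eqref{initial-condition-1} (so that Lemmas~\ref{lem-operator-1}--\ref{thm-linearized-1} remain applicable at every step), and renders the $X^{s-1}$ map a contraction for (ii). The delicate point is that the constants $\lambda_T$ and $C(R)$ secretly involve the $W^{1,\infty}$- and time-derivative norms of the \emph{previous} iterate; showing these are controlled purely by $R$ and $b_0$, uniformly in $\mu$, rests entirely on the $\mu$-uniform bounds for $\mathfrak{T}^{-1}$ in Lemma~\ref{lem-operator-2}, while the uniformity in $\varepsilon$ hinges throughout on the cancellation between the explicit factors of $\varepsilon$ in the estimates and the time horizon $\frac{T}{\varepsilon}$.
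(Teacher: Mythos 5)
Your proposal is correct and follows essentially the same route as the paper: inductive application of the linearized energy estimate (Lemma~\ref{thm-linearized-1}) with $F=0$, closure of the uniform bound by choosing $T$ so that $e^{\varepsilon\lambda_T t}\le e^{\Lambda(R)T}$ stays below the fixed radius, propagation of \eqref{initial-condition-1} by integrating $\partial_t\eta^{(n)}$ and exploiting the $\varepsilon\cdot\frac{T}{\varepsilon}=T$ cancellation, and a contraction in $X^{s-1}_T$ via Lemma~\ref{lem-operator-diff-1} for the Cauchy property. If anything, you are more explicit than the paper, which dispatches part (ii) with the phrase ``classical bootstrap argument'' rather than writing out the difference estimate you supply.
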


\begin{proof}
  By Lemma \ref{thm-linearized-1}, we know that, for every $n\in \mathbb{N}$, there is a positive time $T^{(n)}$ and a unique solution $ U^{(n + 1)} \in \mathcal{C}([0, \frac{T^{(n)}}{\varepsilon}]; X^{s})$ to \eqref{appro-eqns-1}. Moreover, we may verify that $ U^{(n + 1)} $ satisfies the inequality
\begin{equation*}
    E^{s}(U^{(n + 1)}(t)) \leq
        e^{\varepsilon \lambda_{T}^{(n)}t}E^{s}(U_{0})
\end{equation*}
if
\begin{equation*}
    \lambda_{T}^{(n)} \geq  \sup_{t\in[0, \frac{1}{\varepsilon}T^{(n)} ]}C(E^s(U^{(n)} ), \|\eta^{(n)}_t\|_{L^\infty}, \|u^{(n)}_t\|_{L^\infty}, \sqrt{\mu}\|u^{(n)}_{tx}\|_{L^\infty})
\end{equation*}
and the condition
\eqref{initial-condition-1} holds for $n\in \mathbb{N}$.

In fact, we suppose by the induction argument that
\begin{equation}\label{induction-1}
  \sup_{t\in[0, \frac{1}{\varepsilon}T^{(n)} ]}E^{s}(U^{(n)}(t)) \leq 2 \,E^{s}(U_{0}),
\end{equation}
which is already satisfied for the case $n=0$.
Hence, thanks to the equation \eqref{appro-eqns-1} and the Sobolev embedding $H^s(\mathbb{R}) \hookrightarrow L^{\infty}(\mathbb{R}) $  for $s>\frac{1}{2}$, we get
\begin{equation}\label{uniform-bdd-1}
 \|\eta^{(n)}_t\|_{L^\infty}^2+\|u^{(n)}_t\|_{L^\infty}^2+\sqrt{\mu}\|u^{(n)}_{tx}\|_{L^\infty}^2\leq C_0 E^{s}(U^{(n)}(t)) \leq 2 C_0\,E^{s}(U_{0}),
\end{equation}
and then
\begin{equation*}
 \sup_{t\in[0, \frac{1}{\varepsilon}T^{(n)} ]}C \left(E^s(U^{(n)} ), \|\eta^{(n)}_t\|_{L^\infty}, \|u^{(n)}_t\|_{L^\infty}, \sqrt{\mu}\|u^{(n)}_{tx}\|_{L^\infty} \right) \leq C_1(E^{s}(U_{0})).
\end{equation*}
It follows that if we take $\lambda_{T}^{(n)}=C_1(E^{s}(U_{0}))$ and  $ T^{(n)} =\frac{1}{2\varepsilon} C_1(E^{s}(U_{0}))=:T_1$, then we get
\begin{equation}\label{uniform-bdd-2}
  \sup_{t \in \big[{0,\ \frac{T_1}{\varepsilon}}\big]} {E^{s} \left(U^{(n + 1)}(t) \right)} \leq e^{\frac{1}{2}}\,E^{s}(U_{0}) \leq 2\,E^{s}(U_{0}) \quad (\forall \, n \in \mathbb{N}).
\end{equation}
We  now verify that the condition \eqref{initial-condition-1} holds for every $n \in \mathbb{N}$ if we take the positive time $T$ small enough (independent of $\varepsilon$ and $\mu$). In fact, since
\begin{equation*}
   h^{(n)}= h_0+\varepsilon +
      \int_{0}^{t}{\partial_{t}\eta^{(n)}}(\tau)\, d\tau  \quad \mbox{and} \quad  u^{(n)} = u^{(n)}|_{t = 0} +
      \int_{0}^{t}{\partial_{t}u^{(n)}}(\tau)\, d\tau,
\end{equation*}
we get from \eqref{uniform-bdd-1} that
\begin{equation}\label{uniform-bdd-3}
\begin{split}
&\inf_{x\in \mathbb{R}} h^{(n)}(t) \geq \inf_{x\in \mathbb{R}} h_0 -2 \, t\, \varepsilon\,(E^{s}(U_{0}))^{\frac{1}{2}} ,\\
& \inf_{x\in \mathbb{R}} (1-2\Omega \varepsilon u^{(n)}(t)) \} \geq \inf_{x\in \mathbb{R}} (1-2\Omega \varepsilon u_0) \} -4\Omega \varepsilon \, t
    \,(E^{s}(U_{0}))^{\frac{1}{2}}.
\end{split}
\end{equation}
Therefore, taking $T=\min\{T_1, ((4+8\Omega)b_0)^{-1}(E^{s}(U_{0}))^{-\frac{1}{2}}\}$, we obtain that, for every $n \in \mathbb{N}$, $t \in [0, \frac{1}{\varepsilon}T]$,  the condition \eqref{initial-condition-1} holds with $ b_{0} $ replace by $ b_{0}/2 $. This  completes the proof of Lemma \ref{lem-linear-appro-1} by a classical bootstrap argument.
\end{proof}

\begin{proof}[Proof of Theorem \ref{thm-local}] Thanks to Lemmas \ref{thm-linearized-1}, \ref{lem-uniqueness-1}, and \ref{lem-linear-appro-1}, we may readily find, by the standard argument in hyperbolic PDEs, a positive maximal existence time $T_{max}>0$, uniformly bounded from below with respect to $\varepsilon, \, \mu \in (0, 1)$, such that the Green-Naghdi equations \eqref{R-GN-3} admit a unique solution $U = (\eta , u)^T \in X^s_{T_{max}}$ preserving the condition \eqref{initial-condition-1} for any $ t \in [0, \frac{T_{max}}{\varepsilon})$. In particular if $T_{max} < +\infty$, there hold \eqref{blowup-cond-1} and \eqref{blowup-cond-2}.

Finally, a direction computation (multiplying the first equation in \eqref{R-GN-1} by $\eta$ and the second equation by $u$ then summing up the two) leads to the conservation of energy $E(\eta, u) = \|\eta\|_{L^2}^2+(\mathfrak{T}u, u)=\int_{\mathbb{R}}(\eta^2+h\, u^2+\frac{\mu}{3}h^3u_x^2)\,dx$. This ends the proof of Theorem \ref{thm-local}.
\end{proof}

\renewcommand{\theequation}{\thesection.\arabic{equation}}
\setcounter{equation}{0}


\section{Rigorous justification of the unidirectional approximations}\label{Sec_just}

\begin{theorem}\label{thm-justification-1}
 Given $\mu_0 > 0$ and $M > 0$. Let $p \in \mathbb{R}$, $\theta \in  [0, 1]$, and $\alpha$, $\beta$, $\gamma$ and $\delta$ be as in Proposition \ref{prop-two-parameters}. If $\beta<0$ then there exists $D > 0$ and $T > 0$ such that for all $u_0\in H^{s+D+1}(\mathbb{R})$, there hold:

(1) there is a unique family $(u^{\varepsilon, \mu}, \eta^{\varepsilon, \mu})_{(\varepsilon, \mu)\in \mathcal{P}_{\mu_0, M}} \in C([0, \frac{T}{\varepsilon}]; H^{s+D}(\mathbb{R})^2)$ given by the resolution of \eqref{GBBM-1} with initial condition $u_0$;

(2) there is a unique family $(\underline{u}^{\varepsilon, \mu}, \underline{\eta}^{\varepsilon, \mu})_{(\varepsilon, \mu)\in \mathcal{P}_{\mu_0, M}} \in C([0, \frac{T}{\varepsilon}]; H^{s+D}(\mathbb{R})^2)$ solving the R-GN equations \eqref{R-GN-2} with initial condition $(u^{\varepsilon, \mu}, \eta^{\varepsilon, \mu})|_{t=0}$.
Moreover, for all $(\varepsilon, \mu) \in \mathcal{P}_{\mu_0, M}$, there holds that
\begin{equation*}
\forall \, t \in [0, \frac{T}{\varepsilon}], \quad \|\underline{u}^{\varepsilon, \mu}-u^{\varepsilon, \mu}\|_{L^{\infty}([0, t] \times \mathbb{R})}+\|\underline{\eta}^{\varepsilon, \mu}-\eta^{\varepsilon, \mu}\|_{L^{\infty}([0, t] \times \mathbb{R})} \leq C\, \mu^2\,t,
\end{equation*}
where the constant $C$ is independent of $\varepsilon$ and $\mu$.
\end{theorem}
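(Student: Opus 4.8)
The plan is to establish the three assertions in turn, the core being the error estimate in part~(2), which I would obtain by feeding the difference of the two solutions into the linear stability theory developed for the symmetrized R-GN system in Section~\ref{Sec_local}.

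\textbf{Step 1 (resolution of the generalized BBM equation).} First I would solve \eqref{GBBM-1} with datum $u_0\in H^{s+D+1}$ on the long interval $[0,\frac{T}{\varepsilon}]$. Since $\beta<0$, the operator $1+\beta\mu\partial_x^2$ that multiplies $u_t$ in \eqref{GBBM-1} is elliptic with bounded inverse (in Fourier, $1-\beta\mu\xi^2=1+|\beta|\mu\xi^2\geq 1$), so \eqref{GBBM-1} recasts as a BBM-type equation $u_t=\Phi(u)$ with a smoothing nonlinearity. The large-time well-posedness then follows exactly as for the R-CH equation in \cite{GLS16,CL09}, producing a unique $u^{\varepsilon,\mu}\in C([0,\frac{T}{\varepsilon}];H^{s+D+1})$ bounded uniformly over $(\varepsilon,\mu)\in\mathcal{P}_{\mu_0,M}$. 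Setting $\eta^{\varepsilon,\mu}:=F(u^{\varepsilon,\mu})$ through the change of unknowns of Proposition~\ref{prop-two-parameters} yields the family of part~(1) in $C([0,\frac{T}{\varepsilon}];H^{s+D})$, the derivative loss being absorbed into $D$.

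\textbf{Step 2 (consistency and resolution of R-GN).} By Proposition~\ref{prop-two-parameters} the family $(u^{\varepsilon,\mu},\eta^{\varepsilon,\mu})$ is consistent of order $s$ with \eqref{R-GN-2}; that is, it satisfies \eqref{R-GN-consistent-1} with remainders $(r_1^{\varepsilon,\mu},r_2^{\varepsilon,\mu})$ bounded in $L^\infty([0,\frac{T}{\varepsilon}];H^s)$, the loss $D$ being precisely the number of derivatives consumed in bounding these remainders. For $\mu_0$ small (hence $\varepsilon$ small) the initial datum $(\eta^{\varepsilon,\mu},u^{\varepsilon,\mu})|_{t=0}$ fulfils the non-degeneracy condition \eqref{initial-condition-1}, so Theorem~\ref{thm-local} furnishes the exact solution $(\underline{u}^{\varepsilon,\mu},\underline{\eta}^{\varepsilon,\mu})$ of \eqref{R-GN-2} with this datum; after possibly shrinking $T$, both families live on a common interval $[0,\frac{T}{\varepsilon}]$ and stay uniformly bounded in $X^s$ there.

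\textbf{Step 3 (error estimate).} Rewriting \eqref{R-GN-consistent-1} in the quasilinear form \eqref{R-GN-3}---by solving for $\eta_t,u_t$ and inverting $\mathfrak{T}$ via Lemma~\ref{lem-operator-2}---shows that $U:=(\eta^{\varepsilon,\mu},u^{\varepsilon,\mu})^T$ solves $\partial_t U+A[U]\partial_x U=\mu^2 R^{\varepsilon,\mu}$ with $R^{\varepsilon,\mu}$ bounded in $X^{s-1}$ uniformly in $(\varepsilon,\mu)$. Since $\underline{U}$ solves \eqref{R-GN-3} with no remainder, the difference $V:=U-\underline{U}$, which vanishes at $t=0$, obeys
\begin{equation*}
\partial_t V+A[\underline{U}]\,\partial_x V=\mu^2 R^{\varepsilon,\mu}-\big(A[U]-A[\underline{U}]\big)\partial_x U .
\end{equation*}
I would then run the symmetrized energy argument of Lemmas~\ref{thm-linearized-1} and~\ref{lem-uniqueness-1} on $V$ at the level $X^{s-1}$: the quasilinear source is controlled by Lemma~\ref{lem-operator-diff-1}, contributing $\varepsilon C\,E^{s-1}(V)$ to $\frac{d}{dt}E^{s-1}(V)$, while the residual contributes $C\mu^2\|R^{\varepsilon,\mu}\|_{X^{s-1}}$. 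With $E^{s-1}(V)(0)=0$, Gronwall gives
\begin{equation*}
E^{s-1}(V(t))\leq C\mu^2\int_0^t e^{\varepsilon\lambda(t-\tau)}\|R^{\varepsilon,\mu}(\tau)\|_{X^{s-1}}\,d\tau\leq C\mu^2\,e^{\lambda T}\,t,\qquad t\in\Big[0,\frac{T}{\varepsilon}\Big],
\end{equation*}
the \emph{crucial cancellation} being that the growth rate is $\varepsilon\lambda$, so $e^{\varepsilon\lambda(t-\tau)}\leq e^{\lambda T}$ stays $O(1)$ over the whole long interval. Norm equivalence (Lemma~\ref{lem-equiv-norm}) and the embedding $H^{s-1}\hookrightarrow L^\infty$ (valid since $s-1>\frac12$) convert this into $\|V\|_{L^\infty([0,t]\times\mathbb{R})}\leq C\mu^2 t$, the claimed bound.

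\textbf{Main obstacle.} The delicate point is not any single inequality but the bookkeeping of regularity and of the parameters. Converting the consistency remainders of \eqref{R-GN-consistent-1} into a genuine $O(\mu^2)$ forcing for the quasilinear system \eqref{R-GN-3}---while tracking the derivatives lost through $F$, through $\mathfrak{T}^{-1}$, and through the $X^{s-1}$ stability estimate---is exactly what fixes $D$ and forces the hypothesis $u_0\in H^{s+D+1}$. Equally essential, and the reason the $O(\mu^2 t)$ rate survives on the \emph{long} time scale $O(1/\varepsilon)$, is that every growth mechanism in the energy identity carries a prefactor $\varepsilon$, so the naive exponential blow-up is neutralized precisely on $[0,\frac{T}{\varepsilon}]$.
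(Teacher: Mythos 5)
Your proposal is correct and follows essentially the same route as the paper: part (1) from the large-time well-posedness of the BBM-type equation (Theorem \ref{thm-r-CH-local}), consistency from Theorem \ref{prop-two-parameters}, existence of the exact R-GN solution from Theorem \ref{thm-local}, and the $O(\mu^2 t)$ error bound from the symmetrized stability estimate of Lemma \ref{lem-uniqueness-1} with the consistency residual as forcing. The paper's own proof is only a few lines citing these ingredients, and your expansion of the Gronwall step (the $\varepsilon$-prefactor on the growth rate keeping $e^{\varepsilon\lambda t}$ of order one on $[0,T/\varepsilon]$) matches its intended argument.
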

\begin{proof}
Part (1) can be obtained directly from Theorem \ref{thm-r-CH-local}. In view of Theorem \ref{prop-two-parameters} and Remark \ref{rmk-CH-eqns-1}, we know that the family $(u^{\varepsilon, \mu}, \eta^{\varepsilon, \mu})_{(\varepsilon, \mu)\in \mathcal{P}_{\mu_0, M}} $ is consistent with the R-GN equations \eqref{R-GN-2}, so that the second part of the theorem and the error estimate follow from the well-posedness theorem (Theorem \ref{thm-local}) and stability of the R-GN equations (Lemma \ref{lem-uniqueness-1}).
\end{proof}
\vskip 0.2cm

\noindent {\bf Acknowledgments.}
The work of Chen is partially supported by the NSF grant DMS-1613375. The work of Gui is supported in part by the NSF-China under the grants 11571279, 11331005, and the Foundation FANEDD-201315.  The work of Liu is partially supported by the Simons Foundation grant 499875.



\begin{thebibliography}{99}



\bibitem {AFT}  {\small \textsc{C. Amick, L. Fraenkel and J. Toland,} \ On the Stokes conjecture for the wave of extreme form,
{\it Acta Math.,} {\bf 148} (1982), 193-214.}

\bibitem{alla} {\small \textsc{B. Alvarez-Samaniego and D. Lannes,} \  Large time existence for 3D water-waves and asymptotics,
{\it Invent. Math.,} {\bf 171} (2008), 485-541.}

\bibitem{BMN1}{\small \textsc{A. Babin, A. Mahalov and B. Nicolaenko}, Regularity and integrability of 3D Euler and Navier-Stokes equations for rotating fluids, {\it Asymptot. Anal.}, {\bf 15} (1997) 103-150.}

\bibitem{BMN2}{\small \textsc{A. Babin, A. Mahalov and B. Nicolaenko}, Global regularity of 3D rotating Navier-Stokes equations for resonant domains, {\it Indiana Univ. Math. J.}, {\bf 48} (1999) 1133-1176.}

\bibitem{BMN3}{\small \textsc{A. Babin, A. Mahalov and B. Nicolaenko}, On the regularity of three-dimensional rotating Euler-Boussinesq equations, {\it Math. Models Methods Appl. Sci.}, {\bf 9} (1999) 1089-1121.}

\bibitem {BBM} {\small \textsc{T. Benjamin, J. Bona,  and J. Mahony,} \  Model equations for long waves in nonlinear dispersive media, {\it Phil. Trans. Roy. Soc. Lond. A,} {\bf 272} (1972), 47-78.}

\bibitem {BeOl} {\small \textsc{T. Benjamin and P. Olver,}  \ Hamiltonian structure, symmetries and conservation laws for water waves, {\it J. Fluid Mech.,} {\bf 125} (1982), 137-185.}


\bibitem{BCS} {\small \textsc{J.L. Bona, M. Chen, and J.C. Saut,} {\ Boussinesq equations and other systems for small amplitude long waves in nonlinear dispersive media. I. Derivation and linear theory,} {\it J. Nonlinear Sci.,} {\bf 12} (2002), 283-318}.


\bibitem{Bous} {\small
\textsc{J. Boussinesq,} {\ Th\'eorie g\'en\'erale des mouvements qui sont propag\'es dans un canal rectangulaire horizontal,} {\it Comptes Rendus Acad. Sci. Paris, } {\bf 73} (1871), 256-260.}

\bibitem{CH} {\small \textsc {R. Camassa and D. D. Holm,} \ An integrable shallow water equation with peaked solitons,
{\it Phys. Rev. Lett.,}  {\bf 71} (1993), 1661-1664.}


\bibitem{CDGG}{\small \textsc{J.-Y. Chemin, B. Desjardines, I. Gallagher and E. Grenier}, Mathematical geophysics, an introduction to rotating fluids and the Navier-Stokes equations, {\it Oxford Lecture Series in Mathematics and Its Applications}, vol. 32, Clarendon Press, Oxford (2006).}

\bibitem{Con12}{\small \textsc{A. Constantin,} \  On the modelling of equatorial waves, {\it Geophys. Res. Lett.}, {\bf 39} (2012) L05602.}

\bibitem{ce-1}{\small\textsc{A. Constantin and J. Escher,} \ Wave breaking for nonlinear
nonlocal shallow water equations, {\it Acta Math.}, {\bf 181} (1998), 229--243.}

\bibitem{ConJo}{\small \textsc{A. Constantin and R. Johnson}, The dynamics of waves interacting with the Equatorial Undercurrent, {\it Geophys. Astrophys. Fluid Dyn.}, {\bf 109} (2015) 311-358.}


\bibitem{CL09}{\small \textsc{A. Constantin and D. Lannes,} \ The hydrodynamical relevance of the Camassa-Holm and Degasperis-Procesi equations, {\it Arch. Ration. Mech. Anal.}, {\bf 192} (2009) 165-186.}

\bibitem{C85}{\small \textsc{W. Craig,} \ An existence theory for water waves and the Boussinesq and Korteweg-de Vries scaling limits, {\it Commun. Partial Differ. Equ.}, {\bf 10} (1985) 787-1003.}

\bibitem{CB}{\small \textsc{B. Cushman-Roisin and J. Beckers}, Introduction to geophysical fluid dynamics: physical and numerical aspects, {Academic Press} (2011).}


\bibitem{Danchin01}{\small \textsc{R. Danchin,} \ A few remarks on the Camassa-Holm equation,  {\it Differential Integral Equations}, {\bf 14} (2001) 953-988.}

\bibitem{DP}{\small\textsc{A. Degasperis and M. Procesi}, {\it Asymptotic integrability, in symmetry and perturbation theory}, edited by A. Degasperis and G. Gaeta, World Scientific, River Edge, New Jersey, (1999), 23-37.}

\bibitem{DuIs} {\small \textsc{V. Duch\^ene and S. Israw,} \ Well-posedness of the Green-Naghdi and Boussinesq-Peregrine systems, arXiv:1611.04305.}

\bibitem{EM}{\small \textsc{P. Embed and A. Majda}, Averaging over fast gravity waves for geophysical flows with arbitrary potential vorticity, {\it Commun. Partial Differ. Equ.}, {\bf 21} (1996) 619-658.}

\bibitem{Fan}{\small \textsc{L. L. Fan, H. J. Gao and Y. Liu}, On the rotation-two-component Camassa-Holm system modelling the equatorial water waves, {\it Adv. Math.}, {\bf 291}(2016),59-89.}

\bibitem{FB}{\small \textsc{A. Fedorov and J. Brown}, Equatorial waves. In {\it Encyclopedia of Ocean Sciences} (ed. J. Steele), Academic (2009) 3679-3695.}


\bibitem{GSR07}{\small \textsc{I. Gallagher and L. Saint-Raymond}, \ On the influence of the Earth's rotation on geophysical flows, {\it Handbook of Mathematical Fluid Mechanics}, {\bf 4} (2007) 201-329.}

\bibitem{Gard68}{\small \textsc{C. S. Gardner, M. D. Kruskal, and R. Miura}, \ Korteweg-de Vries equation and generalizations, II. Existence of conservation laws and constants of motion, {\it J. Math. Phys.}, {\bf 9} (1968) 1204-1209.}

\bibitem{GN76}{\small \textsc{A. Green and P. Naghdi}, \ A derivation of equations for wave propagation in water of variable depth, {\it J. Fluid Mech.}, {\bf 78} (1976) 237-246.}

\bibitem{GLL16}{\small \textsc{G. Gui, Y. Liu, and T. Luo,} A shallow-water modelling  with the Coriolis effect and travelling waves, submitted, 2017.}

\bibitem{GLS16}{\small \textsc{G. Gui, Y. Liu, and J. Sun,} A nonlocal shallow-water model arising from the full water waves with the Coriolis effect, submitted, 2017.}

\bibitem{Is2011}{\small \textsc{S. Israwi,} Large time existence for 1D Green-Naghdi equations, {\it Nonlinear Analysis: Theory, Methods \& Applications} {\bf 74} (2011) 81-93.}

\bibitem{Iv}{\small \textsc{R. Ivanov}, Two-component integrable systems modelling shallow water waves: the constant vorticity case, {\it Wave Motion} {\bf 46} (2009) 389-396.}

\bibitem{Iz}{\small \textsc{T. Izumo}, The equatorial current, meridional overturning circulation, and their roles in mass and heat exchanges during the El Ni\~no events in the tropical Pacific Ocean, {\it Ocean Dyn.} {\bf 55} (2005) 110-123.}

\bibitem{Jo1}{\small \textsc{R. S. Johnson,} \ Camassa-Holm, Korteweg-de Vries and related models for water waves, {\it Journal of Fluid Mechanics}, {\bf 455} (2002) 63-82.}

\bibitem{KN}{\small
\textsc{K. Kano and T. Nishida,} {\ A mathematical justification for Korteweg-de Vries equation and
Boussinesq equation of water surface waves,} {\it Osaka J. Math, } {\bf 23} (1986), 389-413.}

\bibitem{Ka-Pon-88}{\small \textsc{T. Kato and G. Ponce,}\  Commutator estimates and the Euler and Navier-Stokes equations, {\it Comm.
Pure App. Math.}, {\bf 41} (1988) 891-907.}


\bibitem{KdV} {\small \textsc{D. J. Korteweg and G. de Vries},  On the change of form of long waves advancing in a rectangular channel, and on a new type of long stationary waves,  {\it Phil. Mag.,} {\bf 39}, No. 5 (1895), 422-442.}

\bibitem {Li} {\small \textsc{Y. A. Li,} \ A shallow-water approximation to the full water wave problem, {\it Comm. Pure Appl. Math.,} {\bf 59} (2006), 1225-1285.}


\bibitem {Mc1} {\small \textsc{H. P. Mckean,} \ Breakdown of the Camassa-Holm equation, {\it
Comm. Pure Appl. Math.,} {\bf LVII} (2004), 0416-0418.}

\bibitem{Ped}{\small \textsc{J. Pedlosky}, Geophysical fluid dynamics, {\it Springer, New York}, 1992.}


\bibitem{Serre} {\small \textsc{F. Serre,} Contribution \`a l'\'etude des \'ecoulements permeanents et variables ands les canaux, {\it Houille Blanche,} {\bf 3} (1953) 374-388.}


\bibitem{SG} {\small \textsc{C. H. Su and C. S. Gardner,} Korteweg-de Vries equation and generalizations. III. Derivation of the Korteweg-de Vries equation and Burgers equation, {\it J. Math. Phys.,} {\bf 10} (1969) 536-539.}


\bibitem{Vallis}{\small \textsc{G. Vallis}, Atmospheric and oceanic fluid dynamics, {\it Cambridge University Press}, 2006.}

\bibitem {Wh} {\small \textsc{G. Whitham,} Linear and nonlinear waves, {\it John Wiley and Sons, New York,}  1973.}


\end{thebibliography}
\end{document}